\newcommand{\R}{\mathcal{R}}
\setlist[itemize]{leftmargin=.5in}
\newcounter{taggedeq}
\pretocmd{\equation}{\stepcounter{taggedeq}}{}{}
\newcommand\sbullet[1][.5]{\mathbin{\vcenter{\hbox{\scalebox{#1}{$\bullet$}}}}}
\newmdenv[
topline=false,
bottomline=false,
rightline=false,
skipabove=\topsep,
skipbelow=\topsep,
linewidth=4
]{siderules}
\newcommand{\hookdoubleheadrightarrow}{%
	\hookrightarrow\mathrel{\mspace{-15mu}}\rightarrow
}
\newcommand{\supp}{\text{supp}}
\newcommand{\diam}{\text{diam}}
\newcommand{\BC}{\text{BC}}
\newcommand{\loc}{\text{loc}}
\newcommand{\id}{\text{id}}
\newtheorem{theorem}{Theorem}
\newtheorem{proposition}[theorem]{Proposition}
\newtheorem{remark}[theorem]{Remark}
\newtheorem{definition}[theorem]{Definition}
\newtheorem{lemma}[theorem]{Lemma}
\newtheorem{example}[theorem]{Example}
\newtheorem{assumption}[theorem]{Assumption}
\newtheorem*{proposition*}{Proposition}
\newcommand*\dx{\mathop{}\!\mathrm{d}}
\DeclareMathOperator*{\esssup}{ess\,sup}
\title{On uniqueness in structured model learning}
\author{Martin Holler \thanks{IDea\_Lab - The Interdisciplinary Digital Lab at the University of Graz, University of Graz, Austria. %MH further is a member of NAWI Graz (\href{https://www.nawigraz.at}{www.nawigraz.at}) and of BioTechMed Graz (\href{https://biotechmedgraz.at}{biotechmedgraz.at}) 
		\{\href{mailto:martin.holler@uni-graz.at}{martin.holler@uni-graz.at}, \href{mailto:erion.morina@uni-graz.at}{erion.morina@uni-graz.at}\}
	} \and Erion Morina \footnotemark[1]}
\begin{document}	
\maketitle
\begin{abstract}
	This paper addresses the problem of uniqueness in learning physical laws for systems of partial differential equations (PDEs). 
	Contrary to most existing approaches, it considers a framework of \emph{structured model learning}, where existing, approximately correct physical models are augmented with components that are learned from data. The main results of the paper 
are a uniqueness and a convergence result that cover a large class of PDEs and a suitable class of neural networks used for approximating the unknown model components. 
The uniqueness result shows that, in the limit of full, noiseless measurements, a unique identification of the unknown model components \emph{as functions} is possible as classical regularization-minimizing solutions of the PDE system. 
This result is complemented by a convergence result showing that model components learned as parameterized neural networks from incomplete, noisy measurements approximate the regularization-minimizing solutions of the PDE system in the limit.
These results are possible under specific properties of the approximating neural networks and due to a dedicated choice of regularization. With this, a practical contribution of this analytic paper is to provide a class of model learning frameworks different to standard settings where uniqueness can be expected in the limit of full measurements.
\end{abstract}
\begin{keywords}
	Model learning, partial differential equations, neural networks, unique identifiability, inverse problems.\\		
\end{keywords}
\newpage
\section{Introduction}
Learning nonlinear differential equation based models from data is a highly active field of research. 
Its general goal is to gain information on a (partially) unknown differential-equation-based physical model from measurements of its state. Information on the model here means to either directly learn a parametrized version of the model or to learn a corresponding parametrized solution map. In both cases, neural networks are used as parametrized approximation classes in most of the existing recent works. 
Important examples, reviewed in \cite{boulle23}, are physics informed neural operators \cite{kovachki24}, DeepONets \cite{Lu19}, %
Fourier Neural Operators \cite{Azizzadenesheli20}, Graph Neural Networks \cite{Zongyi20}, Wavelet Neural Operators \cite{Tripura22}, DeepGreen \cite{Shea21} and model reduction \cite{bhattacharya21}, among others. %
The comprehensive reviews \cite{Azizzadenesheli2024,blechschmidt21,kutz23, deryck24,Kovachki2024,tanyu23} and the references therein, provide an overview of the state of the art.

\noindent \paragraph{Scope.} 
The above works all focus on \emph{full model learning}, i.e., learning the entire dif\-fer\-en\-tial-equation-based model from data. In contrast to this, the approach considered here is focused on \emph{structured model learning}, where we assume that an approximately correct physical model is available, and only extensions of the model (corresponding to fine-scale hidden physics not present in the approximate model) are learned from data. Specifically, we are concerned with the problem of identifying an unknown nonlinear term $f$ together with physical parameters $\varphi$ of a system of partial differential equations (PDEs)
\begin{align}
	\label{introsystem}
	\partial_t u = F(t, u, \varphi)+f(t,u), \qquad (t,x)\in (0,T)\times \Omega,
\end{align}
from indirect, noisy measurements of the state $u$. Here, $T>0$, $\Omega$ is a domain, $F$ is the known physical model and all involved quantities can potentially be vector valued such that systems of PDEs are covered.  Also note that the terms $F$ and $f$ can act on values and higher order derivatives of the state. Given this, even though we focus on non-trivial physical models $F$, our work covers also the setting of full model learning by setting $F(t,u,\varphi)=0$.

The main question considered in this work is to what extent measurements $Ku^l$ of system states $u^l$ corresponding to (unknown) parameters $\varphi^l$, $l=1,\ldots,L$, allow to uniquely identify the nonlinearity $f$. Already in the simple setting that $f$ acts pointwise, i.e., $f(\cdot,u)(t,x) = f(u(t,x))$, it is clear that, without further specification, this question only has a trivial answer: Even if $(u^l,\varphi^l)_l$ is known entirely, $f$ is only determined on $\bigcup _{l=1}^L \{ u^l(t,x) \,|\, (t,x) \in (0,T) \times \Omega \}$.

A natural way to overcome this, as done in \cite{kutyniok23} (and \cite{scholl_icassp}) for full model learning, is to consider particular types of functions $f$: Specifying to the case $F(t,u,\varphi)=0$, a result of \cite{kutyniok23} is that a linear or algebraic function $f$ is uniquely identifiable from full state measurements if and only if the state variables (and their derivatives in case $f$ acts also on derivatives) are linearly or algebraically independent, respectively. Similarly, \cite{kutyniok23} shows that a smooth $f$ is uniquely reconstructable from full state measurements if the values of the state variables (and their derivatives) are dense in the underlying Euclidean vector space. Consistent with this result, \cite{shumaylov2025} shows that equation discovery typically requires chaotic behavior. While these results provide answers in rather general settings, the conditions on $u$ that guarantee unique recovery are difficult to verify exactly in practice (\cite{kutyniok23} provides an SVD-based algorithm that classifies unique identifiability via thresholding).

A different possibility to address the uniqueness problem would be to consider a specific parametrized class of functions $\{f_\theta \,|\, \theta \in \Theta \}$ for approximating $f$, and to investigate uniqueness of the parameters. In case of simple approximation classes such as polynomials, this would indeed provide a simple solution (e.g., parameters of a $n$-degree polynomial are uniquely determined by $n+1$ different values of the state). In case of more complex approximation classes such as neural networks however, this even introduces an additional difficulty, namely that different sets of parameters might represent the same function.

The approach we take in this work to address the uniqueness problem in model learning follows classical inverse-problems techniques for unique parameter identification via regularization-minimizing solutions. Specifically, covering also the setting of non-trivial physical $F$, additional, unknown parameters $(\varphi^l)_l$ and non-trivial forward models, we consider uniqueness of the function $f$ (and the corresponding parameters $\varphi=(\varphi^l)_l$ and states $u=(u^l)_l$) as solutions to the full measurement/vanishing noise limit problem
\begin{equation}
	\label{demo_dagger}
	\tag{$p^\dagger$}
	\min_{\varphi,u,f}\mathcal{R}^\dagger(\varphi,u,f)\qquad \text{s.t. }\forall l: \quad \partial_t u^l = F(t, u^l, \varphi^l)+f(t,u^l), \quad K^\dagger u^l = \hat y^l
\end{equation}
where $K^\dagger$ is the injective full measurement operator and $y=(\hat y^l)_l$ is the corresponding full-measurement data.
With this, we allow $\R^\dagger$ to incorporate prior information on $f$ that can be used to resolve unique identifiability. In view of the above-described alternative works on uniqueness in model learning, this is related to considering $f$ to be out of a class of functions for which the measurements are sufficient for unique identifiability, only that we do not explicitly rely on such as setting, but rather provide a general framework that will always approximate a desired solution $f^\dagger$ as unique regularization-minimizing solution consistent with the measurement data. Of course, if the measurement data is sufficient to uniquely identify a ground-truth $f$ without the use of any additional prior information, our setting will recover this ground-truth. In addition to the question of recovering $(\varphi^\dagger, u^\dagger, f^\dagger)$ as unique solution to \eqref{demo_dagger}, it is necessary to analyze in what sense parametrized solutions $(\varphi, u, f_\theta)$ of the regularized problem
\begin{equation}
	\label{demo_m}
	\tag{$p^m$}
	\min_{\varphi,u,\theta}\mathcal{R}_m(\varphi,u,\theta) +\sum_{l=1}^L (\lambda^m\Vert\partial_t u^l-F(t, u^l, \varphi^l)-f_\theta(t,u^l)\Vert^q+   \mu^m \Vert K^mu^l-y^{m,l}\Vert^r)
\end{equation}
converge to solutions of \eqref{demo_dagger} for some $1\leq q,r<\infty$. Here, $(K^m)_m$ is a sequence of measurement operators suitably approaching $K^\dagger$, $(y^{m,l})_m$ with $y^{m,l}\approx K^m u^{\dagger,l}$  is a sequence of (noisy) measured data and $\lambda^m, \mu^m>0$ are regularization parameters. More concretely, we suppose the measured data $(y^{m,l})_m$ to fulfill the noise estimation given by
\begin{align}
	\label{noise_estimation1}
	\Vert y^{m,l}-K^m u^{\dagger, l}\Vert\leq \delta(m)
\end{align}
such that $\delta(m)\to 0$ as $m\to \infty$. This is in fact our only requirement on the noise model, i.e., our analytic results do not require assumptions on the noise distribution nor the nature of the noise such as homo-/heteroscedasticity. Note further that although \eqref{noise_estimation1} anticipates comparable noise levels on the different measurements $y^{m,l}$ for $l=1,\dots, L$, an extension to different noise levels for different measurements $l$ is straightforward by introducing different weightings in \eqref{demo_m} for the data fidelities.

In order to obtain our convergence- and uniqueness results, a suitable regularity of $f$, approximation properties of the parametrized approximation class $\mathcal{F} = \{ f_\theta|\, \theta \in \Theta \}$ (such as neural networks) as well as a suitable choice of the regularization functionals $\mathcal{R}_m$ and $\mathcal{R}^\dagger$ are necessary. It turns out from our analysis that the class of locally $W^{1,\infty}$-regular functions is suitable for $f$ and that parameter-growth estimates and local $W^{1,\infty}$ approximation capacities are required for $\mathcal{F}$. We refer to Assumption \ref{ass_uniqueness}, iv) below for precise requirements on $\mathcal{F}$ which are, as we argue in our work, satisfied for example by certain classes of neural networks. Regarding the regularization functionals, a suitable choice is
\begin{equation}
	\label{intro_regularizations}
	\begin{aligned}
		\mathcal{R}_m(\varphi,u,\theta) & = \mathcal{R}_0(\varphi,u)+\Vert f_\theta\Vert_{L^\rho}^\rho+\Vert\nabla f_\theta\Vert_{L^\infty}+\nu^m\Vert\theta\Vert, \\
		\mathcal{R}^\dagger(\varphi,u,f) & =\mathcal{R}_0(\varphi,u)+\Vert f\Vert_{L^\rho}^\rho+\Vert\nabla f\Vert_{L^\infty},
	\end{aligned}
\end{equation}
with the parameters $\nu^m$ appropriately converging to zero as $m \rightarrow \infty$ and $1< \rho< \infty$. Here, the norms $\Vert  \cdot \Vert_{L^\rho}^\rho+\Vert\nabla (\cdot )\Vert_{L^\infty} $ (as opposed to, e.g., a standard $L^p$ norm) are necessary to ensure convergence of $f_\theta$ to $f$ as functions in $W^{1,\infty}$, which in turn is necessary for convergence of the PDE model. We showcase the necessity of choosing this $W^{1,\infty}$-type norm in contrast to only using the $\Vert  \cdot \Vert_{L^\rho}^\rho$-norm in an example in Appendix \ref{app:analytic_example}: In this example, it is possible to recover the underlying unique hidden physics when using the suggested $W^{1,\infty}$-type regularization approach, while a standard $\Vert  \cdot \Vert_{L^\rho}^\rho$-type approach fails to do so.

The norm  $\| \theta \| $ on the finite dimensional parameters $\theta$ in \eqref{intro_regularizations} is necessary for well-posedness of \eqref{demo_m}, but will vanish in the limit as $m\rightarrow \infty$. The choice $1<\rho<\infty$ is necessary for ensuring uniqueness of a regularization-minimizing solution \eqref{demo_dagger} via strict convexity, and $\mathcal{R}_0(\varphi,u)$ can be any problem-dependent regularization. Note that here, the main ingredients for obtaining uniqueness are that $f$ is no longer parametrized by $\theta$ in the limit (e.g. can be any $W^{1,\infty}$ function) together with a classical strict convexity argument.

While our results on using $W^{1,\infty}$ regularization are formulated for the structured-model-learning-setting here, we note that similar requirements and results can also be expected when learning entire PDE models e.g. with neural operators. %

An important question from the computational perspective is how the $W^{1,\infty}$-norm can be approximated in practice or whether there exist scalable surrogates. In fact, a direct way to estimate the $\Vert \nabla (\cdot)\Vert_{L^\infty}$ term is to draw a certain number of uniformly random samples of the gradient over the considered domain and determine a global extremum over the samples. Aside from this direct approach, this question has been broadly considered in literature in the context of ensuring Lipschitz-stability of neural networks, see for example \cite{Latorre2020Lipschitz}, where the Lipschitz constant of neural networks is estimated via sparse polynomial optimization using linear or semidefinite programming, \cite{Jordan2020}, where the local Lipschitz constant of ReLU networks is computed exactly using mixed-integer programming and by providing upper bound in case of an early stop of the algorithm, \cite{Gouk2021} where a regularization of the Lipschitz constant is carried out by computing upper bounds during the training process using layerwise constants, \cite{huang2023on} which applies a least squares regression algorithm for estimating the Lipschitz constant and also provides lower bounds on the sample complexity of the underlying problem and \cite{Fazlab2019}, where the problem of estimating the Lipschitz constant is interpreted as a semidefinite program. See also \cite{Bungert2021} which considers variational regularization for controlling the Lipschitz constant of a neural network.

\noindent \textbf{Contributions.} Following the above concept, we provide a comprehensive analysis of structured model learning in a general setting. Our main contribution is a precise mathematical setup under which we prove the above-mentioned uniqueness and approximation results. Notably, this setup differs from standard model-learning frameworks commonly used in practice, in particular with respect to the choice of regularization for the approximating functions. In view of this, a practical consequence of our work can be a suggestion of appropriate regularization functionals for model learning that ensure unique recovery in the full-measurement/ vanishing noise limit. Indeed, as an example in Appendix \ref{app:analytic_example} shows, without appropriate regularization a unique recovery in the limit may fail. Besides our main uniqueness result and the corresponding general framework to which it applies, we provide a well-posedness analysis and concrete examples to which our results apply. The latter includes linear and nonlinear (in the state) examples for the physical term $F$ as well as classes of neural networks for $\mathcal{F}$ to which our assumptions apply.

The following proposition, which is a consequence of Proposition \ref{prop:p_dagger} and Theorem \ref{thm:uniqueness} below, showcases our main results for a specific, linear example.
\begin{proposition} 
	\label{prop:demo}
	Let the space setup be given by the state space $V = H^1(\Omega)$, the image space $W = L^{2}(\Omega)$, the measurement space $Y = L^2(\Omega)$ and parameter space $X_\varphi = H^1(\Omega)$ for a bounded interval $\Omega\subseteq \mathbb{R}$ with the time extended spaces
	\[
	\mathcal{V}=W^{1,2,2}(0,T;V),\quad \mathcal{W}=L^2(0,T;W),\quad \mathcal{Y}=L^2(0,T;Y).
	\]
	Consider the one dimensional convection equation with unknown reaction term
	\begin{align}
		\label{eq:transport_pde}
	\partial_t u^l = \varphi^l\cdot \nabla u^l+f(u^l)
	\end{align}
	where $\varphi^l\in X_\varphi$ for $1\leq l\leq L$ subject to $K^\dagger u^l = \hat y^l$ with $K^\dagger:\mathcal{V}\to \mathcal{Y}$ an injective, linear, bounded operator and $(\hat y^l)_l\subseteq\mathcal{Y}$ full measurement data. Suppose that there exist admissible $\hat{f}\in W^{1,\infty}(\mathbb{R})$, $\hat{u}\in \mathcal{V}^L$ and $\hat{\varphi}\in X_\varphi^L$ solving \eqref{eq:transport_pde} such that $K^\dagger \hat{u}^l=\hat{y}^l$ for $1\leq l\leq L$. Assume that $f$ is approximated by neural networks $f_\theta$ of the form in \cite[Theorem 1]{belomestny23} parameterized by $\theta\in \Theta^m$ with $m\in\mathbb{N}$ a scale of approximation. Suppose that $(K^m)_m$ is a sequence of bounded linear operators strongly converging to $K^\dagger$ and $(y^{m,l})_m\subseteq \mathcal{Y}$ a sequence of measurement data converging to $y^l$. Assume further that $U\subseteq \mathbb{R}$ is a sufficiently large interval. 
	
	Then there exists a unique solution $(\varphi^\dagger, u^\dagger, f^\dagger)$ to the vanishing noise limit problem
	\begin{equation}
		\tag{$p^\dagger$}
		\begin{aligned}
			&\min_{\substack{\varphi\in H^1(\Omega)^L,u\in\mathcal{V}^L,\\f\in W^{1,\infty}(U)}}\sum_{l=1}^L(\Vert \varphi^l\Vert_{H^1(\Omega)}^2+\Vert u^l\Vert_{\mathcal{V}}^2)+\Vert f\Vert_{L^2(U)}^2+\Vert\nabla f\Vert_{L^\infty(U)}\\
			&\text{s.t. }\forall l: \quad \partial_t u^l = \varphi^l\cdot \nabla u^l+f(u^l), \quad K^\dagger u^l = \hat y^l.
		\end{aligned}
	\end{equation}
	
	Furthermore, for $\lambda^m, \mu^m\to \infty,\nu^m\to 0$ as $m\to \infty$ at certain rate depending on the neural network architectures and $(y^{m,l})_m$, let $(\varphi_m, u_m, \theta_m)$ be a solution to
	\begin{equation}
		\tag{$p^m$}
		\begin{aligned}
			&\min_{\varphi\in H^1(\Omega)^L,u\in\mathcal{V}^L,\theta\in \Theta^m}\sum_{l=1}^L(\Vert \varphi^l\Vert_{H^1(\Omega)}^2+\Vert u^l\Vert_{\mathcal{V}}^2)+\Vert f_\theta\Vert_{L^2(U)}^2+\Vert\nabla f_\theta\Vert_{L^\infty(U)}\\
			&\qquad+\nu^m\Vert\theta\Vert+\sum_{l=1}^L (\lambda^m\Vert\partial_t u^l-\varphi^l\cdot\nabla u^l-f_\theta(u^l)\Vert^2_{\mathcal{W}}+ \mu^m \Vert K^mu^l-y^{m,l}\Vert^2_{\mathcal{Y}})
		\end{aligned}
	\end{equation}		
	for each $m\in\mathbb{N}$. Then if $f^\dagger\in \mathcal{C}^1(U)$ it holds true that
	 $\varphi_m\rightharpoonup \varphi^\dagger$ in $H^1(\Omega)^L$, $u_m\rightharpoonup u^\dagger$ in $\mathcal{V}^L$ and $f_{\theta_m}\overset{*}{\rightharpoonup} f^\dagger$ in $W^{1,\infty}(U)$.
\end{proposition}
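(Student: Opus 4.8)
The plan is to obtain this proposition as a direct specialization of the general results: it suffices to exhibit the concrete objects of the statement as an admissible instance of the abstract setting and to verify Assumption \ref{ass_uniqueness} (together with the well-posedness hypotheses underlying Proposition \ref{prop:p_dagger} and Theorem \ref{thm:uniqueness}), after which both theorems apply verbatim. Concretely, I would take $\mathcal{R}_0(\varphi,u)=\sum_{l=1}^L(\Vert\varphi^l\Vert_{X_\varphi}^2+\Vert u^l\Vert_{\mathcal{V}}^2)$, the exponents $\rho=2$ and $q=r=2$, the physical model $F(t,u,\varphi)=\varphi\cdot\nabla u$, and $\mathcal{F}=\{f_\theta\}$ the network class of \cite[Theorem 1]{belomestny23}. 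The parts that are immediate: $\mathcal{R}_0$ is a finite sum of squared Hilbert-space norms, hence convex, coercive and weakly lower semicontinuous on $X_\varphi^L\times\mathcal{V}^L$; together with the terms $\nu^m\Vert\theta\Vert$ and $\mu^m\Vert K^mu^l-y^{m,l}\Vert^2$ this gives coercivity and weak lower semicontinuity of the objective of $(p^m)$, so minimizers $(\varphi_m,u_m,\theta_m)$ exist; and $\rho=2\in(1,\infty)$ supplies the strict convexity of $f\mapsto\Vert f\Vert_{L^2(U)}^2$ that yields uniqueness of the minimizer of $(p^\dagger)$. The measurement hypotheses ($K^\dagger$ injective, bounded, linear; $K^m\to K^\dagger$ strongly; $y^{m,l}\to y^l$) are exactly those required by the general framework.

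The substantive verification concerns the structural requirements on $F$ and the composition $f\circ u$. Since $\Omega\subseteq\mathbb{R}$ is a bounded interval we have the Sobolev embeddings $H^2(\Omega)\hookrightarrow C^1(\bar\Omega)$ and, by the Aubin--Lions lemma, a compact embedding of $\mathcal{V}=W^{1,2,2}(0,T;H^2(\Omega))$ into $L^2(0,T;H^1(\Omega))$ and hence into $L^2(0,T;L^\infty(\Omega))$, as well as $\mathcal{V}\hookrightarrow C([0,T];H^1(\Omega))\hookrightarrow C([0,T]\times\bar\Omega)$. Consequently, for $\varphi^l\in L^2(\Omega)$ and $u^l\in\mathcal{V}$ one has $\varphi^l\cdot\nabla u^l\in L^2((0,T)\times\Omega)\hookrightarrow L^2(0,T;L^1(\Omega))=\mathcal{W}$, with the corresponding boundedness estimate, so $F$ maps $\mathcal{V}\times X_\varphi$ into $\mathcal{W}$ as demanded. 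For the limit passage in the constraint, if $\varphi^l_m\rightharpoonup\varphi^l$ in $L^2(\Omega)$ and $u^l_m\rightharpoonup u^l$ in $\mathcal{V}$, then $\nabla u^l_m\to\nabla u^l$ strongly in $L^2(0,T;L^\infty(\Omega))$ by the compact embedding, and pairing this with the weakly convergent $\varphi^l_m$ gives $\varphi^l_m\cdot\nabla u^l_m\rightharpoonup\varphi^l\cdot\nabla u^l$ in $\mathcal{W}$; this is precisely the weak closedness of the PDE constraint needed in Assumption \ref{ass_uniqueness}. The same compactness makes the composition $u^l\mapsto f(u^l)$ meaningful and stable: any state arising in $(p^\dagger)$ or along a minimizing sequence of $(p^m)$ is uniformly bounded in $\mathcal{V}$, hence equibounded in $C([0,T]\times\bar\Omega)$, so choosing $U$ to be a bounded interval containing all these ranges ensures $f\mapsto(u^l\mapsto f(u^l))$ is well defined and continuous from $W^{1,\infty}(U)$ (and $\nabla f\circ u$ behaves accordingly), which is what ``$U$ sufficiently large'' encodes.

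Finally, the hypotheses on the approximation class, i.e. Assumption \ref{ass_uniqueness} iii), amount to local $W^{1,\infty}(U)$ approximation capacity of $\mathcal{F}$ together with a parameter-growth estimate bounding $\Vert\theta\Vert$ in terms of the approximation scale $m$; for the networks of \cite[Theorem 1]{belomestny23} this is exactly what the examples section of the paper establishes, and from that quantitative statement one reads off the admissible growth/decay rates for $\lambda^m,\mu^m\to\infty$ and $\nu^m\to0$. With Assumption \ref{ass_uniqueness} verified for the present choice of spaces, $F$, $\mathcal{F}$, $K^\dagger$, $K^m$ and data, Theorem \ref{thm:uniqueness} delivers $\varphi_m\rightharpoonup\varphi^\dagger$ in $L^2(\Omega)^L$, $u_m\rightharpoonup u^\dagger$ in $\mathcal{V}^L$ and $f_{\theta_m}\overset{*}{\rightharpoonup}f^\dagger$ in $W^{1,\infty}(U)$, while Proposition \ref{prop:p_dagger} identifies $(\varphi^\dagger,u^\dagger,f^\dagger)$ as the unique solution of $(p^\dagger)$, which is the claim. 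The step I expect to be the genuine obstacle is the limit passage in the bilinear term $\varphi^l\cdot\nabla u^l$: it really relies on upgrading the weak convergence of $u^l_m$ in $\mathcal{V}$ to strong convergence of $\nabla u^l_m$ in a space dual-paired with the weakly convergent $\varphi^l_m$, for which the Aubin--Lions compactness of $\mathcal{V}$ and the one-dimensional Sobolev embedding are essential; the remaining items are a matter of matching definitions against the general assumptions.
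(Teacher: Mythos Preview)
Your proposal is correct and follows exactly the approach the paper intends: the proposition is stated as ``a consequence of Proposition \ref{prop:p_dagger} and Theorem \ref{thm:uniqueness}'', and you correctly reduce the task to verifying Assumptions \ref{ass_init_set}--\ref{ass_uniqueness} in this concrete setting (with $\mathcal{R}_0$, $\rho=2$, $F(t,u,\varphi)=\varphi\cdot\nabla u$, and the network class from \cite{belomestny23}) before invoking the general results. Your identification of the bilinear limit passage $\varphi^l_m\cdot\nabla u^l_m\rightharpoonup\varphi^l\cdot\nabla u^l$ as the key point is apt; this is precisely what the paper handles in the linear-physical-term analysis (Lemma \ref{lem:phys_term_weak}) via the Aubin--Lions compactness you describe.
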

\begin{proof}
	See Appendix \ref{app:demo}.
\end{proof}
It is important to emphasize that, among the assumptions stated in Proposition \ref{prop:demo}, the most restrictive one from a practical perspective is the one on existence of an admissible solution $\hat{u}$ with sufficiently high regularity. This can be viewed as an implicit assumption on the existence of a sufficiently regular transport field $\hat{\varphi}$ that is compatible with $\hat{u}$.
Indeed, the regularity of $\hat u$ as a solution to the transport equation \eqref{eq:transport_pde} depends not only on the source term $\hat f$, which is essentially Lipschitz continuous, but crucially on the regularity of the transport field $\hat \varphi$. The relationship between the smoothness of the transport field and the well-posedness and regularity of solutions has been extensively studied. The foundational work \cite{DiPerna1989} establishes well-posedness for transport equations when the transport field has Sobolev regularity, connecting the regularity of solutions to that of the flow generated by the field. A more recent survey of these results is given in \cite{Ambrosio2017}. Further developments, including \cite{Bru2020, Crippa2008, Crippa2022, DeLellis2008,Modena2018}, study how the smoothness of the transport field affects the stability and regularity of solutions, and demonstrate that, at critical levels of regularity, the solutions may lose uniqueness and smoothness. In fact, for Lipschitz continuous $\hat \varphi$ and $\hat f$ together with initial condition $u_0\in H^2(0,1)$ one can show, following \cite[Theorem 1.2]{Bru2020}, that $u\in W^{1,2,2}(0,T; H^1(0,1))$. Thus, the regularity assumption on the admissible state above can be interpreted as a regularity assumption on  the parameters $\hat \varphi$, $\hat f$ and $\hat u_0$.

\noindent \textbf{Related works.} This work is mainly motivated by \cite{AHN23} on data-driven structured model learning which proposes an \textit{all-at-once} approach for learning-informed parameter identification, i.e., determining the state simultaneously with the nonlinearity and the input parameters. Note that \cite{AHN23} considers single PDEs, while our work generalizes to PDE systems where the unknown term may additionally depend on higher order derivatives of the state variable. Besides this fundamental difference, we derive wellposedness of the learning problem under slightly different conditions, where higher regularity assumptions on the state space stated in \cite{AHN23} can be omitted if the activation function of the neural networks approximating the nonlinearities is globally Lipschitz continuous. Moreover, we treat the cases of linear and nonlinear physical terms separately. Finally, the main difference of our work to \cite{AHN23} is that we focus on unique reconstructability, whereas \cite{AHN23} is mostly focused on well-posedness of the learning problem and the resulting PDE.

The main reason for choosing an \textit{all-at-once} approach (see e.g. \cite{Kaltenbacher16, KaltNguy22}) in general is the possibility to account for practically realistic, incomplete and indirectly measured state data, which may be polluted by noise. It also circumvents the use of the parameter-to-state map, which requires regularity conditions that may not be feasible in practice (see e.g. \cite{Haber01, Kaltenbacher17, Kaltenbacher14, Nguyen19}). 

In contrast to the \textit{all-at-once} setting pursued here, works that use a learning-informed control-to-state map to study the optimal control of certain PDEs are \cite{Hintermuller22,Papafits22, Dong22}. There it is assumed that the nonlinear constituents are only accessible through data-driven techniques e.g. arising from neural networks.
 Another related work in the field of optimal control is \cite{Court22} on nonlinearity identification in the monodomain model via neural network parameterization. We also mention the recent paper \cite{Kowalczyk24} which deals with the identification of semilinear elliptic PDEs in a low-regularity control regime. In the context of approximating nonlinearities for elliptic state equations see \cite{Spiliopoulos23}. We also mention the recent work \cite{riedl2025} which establishes global convergence guarantees for adjoint-based training of infinite-width neural networks embedded in nonlinear parabolic PDEs. For structured model learning for ODEs we refer to \cite{Ebers24, Goyal21}. See also \cite{NgocNguyen2025} on regularized inversion for hidden reaction law discovery.
 
 Recent work incorporates conservation laws and symmetries into machine-learning models to improve physical fidelity and data efficiency. Soft-constraint methods enforce approximate conservation through regularized loss terms \cite{jagtap20, Li2024, Wu2022}, whereas exact-conservation approaches leverage integral forms \cite{Hansen2024}, or apply adaptive correction mechanisms \cite{CardosoBihlo2025,Geng2024, Schoenlieb2025}. Complementary strategies embed conservation directly into the architecture, via hard constraints in output layers \cite{Sturm21}, projection onto admissible solution spaces \cite{Negiar23}, or symmetry encoding \cite{Mueller23}, with additional architectural designs explored in \cite{morina2025b,Liu23, Yu2023, Richterpowell2022}.
 
 An important aspect of model learning is interpretability, which seeks representations that are accurate, parsimonious, and physically consistent. A primary goal is to recover simple laws that faithfully describe the underlying data \cite{Angelis2023, Quade2016, Brunton2017, Schmidt2009}. For a comprehensive overview of related methods, see \cite{cava2021}.
 
 From the perspective of inverse problems, model learning is, at its core, an identification problem, requiring that the inferred model is uniquely determined by the data to represent the true system dynamics rather than an equivalent alternative. Foundational contributions \cite{Bellman1970,Cobelli1980,DiStefano1980,Miao2011} formalize structural and parameter identifiability in dynamical systems. 
For the motivation of uniqueness results for parameter identification, we refer to the works \cite{Cannon1980,Egger15,Roesch96}, which derive uniqueness from stability estimates. Uniqueness has also been established for semilinear parabolic equations \cite{Isakov1993} and for the recovery of nonlinear diffusion coefficients \cite{Kaltenbacher2021b}. Further results on parameter identification for elliptic equations include \cite{Acar1993,Alessandrini1986,Knowles2001}. Foundational overviews of parameter identification and related inverse problems are provided in \cite{Engl1996} on deterministic regularization theory, \cite{Klibanov2013} on coefficient inverse problems, \cite{Banks1989} on PDE parameter estimation, and \cite{Kaipio2005} on Bayesian inverse problems. Beyond coefficients, model identification targets entirely unknown PDE components, with applications to reaction-diffusion systems \cite{DuChateau1985,Kaltenbacher2020,Kaltenbacher2020b,Kaltenbacher2025}, semilinear equations \cite{Feizmohammadi2024,Kian2023b,Kian2023} and hyperbolic inverse sources \cite{Jiang2017,Yamamoto1995}.

 Nonetheless, there is little hope to obtain results of this kind for the general system \eqref{introsystem}, even if the known physical term is linear in its physical input parameters due to the ambiguity of shift perturbations. In this respect, it seems indispensable to exploit the structural/regularity properties of the unknown term $f$ and the input parameter $\varphi$, as it is in this work and in \cite{kutyniok23}, which was already discussed above. For the sake of completeness we also mention the recent preprint \cite{Kut24}, extending the results of \cite{kutyniok23} on identifiability for symbolic recovery of differential equations to the noisy regime. Note that both works \cite{Kut24,kutyniok23} focus on unique identifiability per se, i.e. the classification of uniqueness, whereas our work provides an analysis-based guideline guaranteeing unique reconstructability in the limit of a practical PDE-based model learning setup.

\paragraph{Structure of the paper.} In Section \ref{sec:problem_setting} we present the problem setting under consideration. The necessary assumptions are outlined in detail in Subsection \ref{subsec:assumptions}. In Subsection \ref{subsec:neuralnet}, applicability of our general assumptions for $\mathcal{F}$ being a certain class of neural networks are discussed. Applicability of the assumptions on the known physical term are discussed in Subsection \ref{subsec:physicalterm}, with examples both for the linear and nonlinear case. Our main result on unique reconstructability in the limit problem is presented in Section \ref{sec:uniqueness}. To ensure a concise presentation of our results, most proofs are covered in the appendix. The results of Subsection \ref{subsec:neuralnet} are proven in Appendix \ref{app:neural_networks} and those of Subsection \ref{subsec:physicalterm} are given in Appendix \ref{app:physical_term}.  In Appendix \ref{app:existence} wellposedness of the main minimization problem is verified under our general assumptions. In Appendix \ref{app:demo} a proof of Proposition \ref{prop:demo}, showcasing our main results for a specific, linear example, is sketched. Finally, in Appendix \ref{app:analytic_example} an example is presented covering the necessity of the proposed regularization for unique recovery in the limit.
\section{Problem setting}
\label{sec:problem_setting}
In the general case, we are interested in obtaining nonlinearities $(f_n)_n$, states $(u^l_n)_{n,l}$, parameters $(\varphi^l_n)_{n,l}$, initial conditions $(u_{0,n}^l)_{n,l}$ and boundary conditions $(g^l_n)_{n,l}$ as solutions of the following system of nonlinear PDEs:
\begin{equation}
	\label{rdsystem}
	\begin{aligned}
		\frac{\partial}{\partial t}u_n^l & = F_n(t,u^l_1,\dots, u^l_N,\varphi_n^l)+f_n(t,\mathcal{J}_\kappa u^l_1,\dots, \mathcal{J}_\kappa u^l_N), \\
		u^l_n(0) &= u^l_{0,n},\\
		\gamma (u^l _n) &= g_n^l
	\end{aligned}\tag{$S$}
\end{equation}
Here, $n=1,\ldots,N$ denotes the number of PDEs and $l=1,\ldots,L$ the number of measurements of different states (with different parameters) that we will have at our disposal for obtaining the $f_n$.

In the above system, the states $u^l_n \in \mathcal{V}$ are given as $u^l_n:(0,T) \rightarrow V$ with $T>0$ and $V$ a static state space of functions $v:\Omega \rightarrow \mathbb{R}$ with $d \in \mathbb{N}$ and $\Omega \subset \mathbb{R}^d$ a bounded Lipschitz domain, $ X_\varphi \ni \varphi_n^l$ is a static parameter space, $ H \ni u^l_n (0),u^l_{0,n}$ is a static initial trace space, and $\mathcal{B} \ni g_n^l $ is a boundary trace space with $g_n^l:(0,T) \rightarrow B$, $B$ the static boundary trace space and $\gamma: \mathcal{V} \rightarrow \mathcal{B}$ the boundary trace map.
The (known) physical terms $F_n$ are given as Nemytskii operators of 
\begin{equation}
	\label{functions_Fi}
	\begin{aligned}
		F_n: (0,T)\times V^N\times X_\varphi&\to W\\
		(t,u_1, \dots, u_N,\varphi)&\mapsto F_n(t,u_1,\dots, u_N,\varphi)
	\end{aligned}
\end{equation}
with $W$ a static image space and $\mathcal{W}$ the corresponding dynamic version.
The $\mathcal{J}_\kappa$ are derivative operators given as
\begin{equation}
	\label{Jdifferential}
	\begin{aligned}
		\mathcal{J}_\kappa:V&\to \otimes_{k=0}^\kappa V_k^\times\\
		v&\mapsto (v, J^1v, \dots, J^\kappa v)
	\end{aligned}
\end{equation}
with the Jacobian mappings $J^k$ given as
\begin{align}
	\label{Jl_operator}
	J^k:V\to V_k^\times, ~ ~ v\mapsto (D^\beta v)_{\vert\beta\vert = k}.
\end{align}
Here, $\kappa \in \mathbb{N}_0$ is the maximal order of differentiation, $V_k $ with $V\hookrightarrow V_k$ are such that 
$D^\beta v\in V_k$ for $1\leq \vert\beta\vert=k\leq \kappa$ with $\beta\in\mathbb{N}_0^d$ and $\vert \beta\vert=\beta_1+\dots+\beta_d$. Here we use "$\hookrightarrow$" to denote a continuous embedding and "$\hookdoubleheadrightarrow$" to denote a compact embedding. Furthermore, with $V_0:=V$, we define $V_k^\times = \otimes_{i=1}^{p_k}V_k$ where $p_k = \binom{d+k-1}{k}$ for $0\leq k\leq \kappa$.
The nonlinearities $f_n$ are given as Nemytskii operators of
\begin{align*}
	f_n: (0,T)\times (\otimes_{k=0}^\kappa V_k^\times)^N&\to W\\
	(t,(v_1^k)_{0\leq k\leq\kappa},\dots, (v_N^k)_{0\leq k\leq \kappa})&\mapsto f_n(t,(v_1^k)_{0\leq k\leq\kappa},\dots, (v_N^k)_{0\leq k\leq \kappa})
\end{align*}
where $f_n: (0,T) \times(\otimes_{k=0}^\kappa \mathbb{R}^{p_k})^N \rightarrow \mathbb{R}$ is extended to $f_n: (0,T)\times (\otimes_{k=0}^\kappa V_k^\times)^N\to W$ via $f_n(t,v)(x):= f_n(t,v(x))$.
We will approximate them with parameterized approximation classes 
\begin{align}
	\label{param_approx_classes}
	\mathcal{F}_n^m = \{f_{\theta_n, n}:(0,T)\times (\otimes_{k=0}^\kappa \mathbb{R}^{p_k})^N\to \mathbb{R}~|~\theta_n\in\Theta_n^m\}
\end{align}
where $m \in \mathbb{N}$ is the scale of approximation and $\Theta_n^m$ are parameter sets. Here, we further define $\Theta^m=\otimes_{n=1}^N\Theta_n^m$ and $\mathcal{F}^m = \otimes_{n=1}^N\mathcal{F}_n^m$.

Approximation of the $f_n$ via the $f_{\theta_n,n}$ will be achieved on the basis of noisy measurements $y^l \approx K^m u^l$, with the $K^m: \mathcal{V}^{N} \rightarrow \mathcal{Y}$ being measurement operators (for scale $m \in \mathbb{N}$) and $\mathcal{Y}$ a space of functions $y:(0,T) \rightarrow Y$ with $Y$ a static measurement space. To this aim, we will analyze the following minimization problem
\begin{align}
	\label{min_prob}
	\notag&\min_{\substack{\varphi\in X_\varphi^{N\times L},\theta\in\Theta^m,\\ u\in\mathcal{V}^{N\times L}, u_0\in H^{N\times L},\\ g\in\mathcal{B}^{N\times L}}}
	\sum_{1\leq l\leq L}\lambda\Vert \frac{\partial}{\partial t}u^l-F(t, u^l,\varphi^l)-f_{\theta}(t,\mathcal{J}_\kappa u^l)\Vert_{\mathcal{W}}^q+\mathcal{R}(\varphi,u,\theta, u_0, g)\\
	&+\sum_{1\leq l\leq L}\bigg[\lambda\Vert u^{l}(0)-u_{0}^l\Vert_{H} ^2+\lambda\mathcal{D}_{\BC}(\gamma(u^l)-g^l)+\mu\Vert K^mu^l-y^l\Vert_\mathcal{Y}^r\bigg]\tag{$\mathcal{P}$}
\end{align}
where $\mathcal{D}_{\BC}$ and $\mathcal{R}$ are suitable discrepancy and regularization functionals, respectively. Note that here, notation wise, we use a direct vectorial extension over $n=1,\ldots,N$ of all involved spaces and quantities, e.g., $F(t,u^l,\varphi^l)=(F_n(t,u^l,\varphi^l_n))_{n=1}^N$.
\subsection{Assumptions}
\label{subsec:assumptions}
The following assumptions, motivated by \cite[Assumption 1]{AHN23}, encompass all requirements necessary to tackle the goals of this work. Under Assumption \ref{ass_init_set}, \ref{ass_param_app_class} and \ref{ass_phys_term} we verify wellposedness of \eqref{min_prob}. Additionally, under Assumption \ref{ass_uniqueness}, we will establish our results on unique reconstructability in the limit $m \rightarrow \infty$.
\begin{assumption}[Functional analytic setup]
	\label{ass_init_set}
	\hspace{1cm}\\
	\underline{Spaces/Embeddings:}
	\begin{enumerate}[label=\roman*)]
		\item For $\kappa \in \mathbb{N}$, suppose that the state space $V$, the spaces $V_k$ for $1\leq k\leq \kappa$, the image space $W$, the observation space $Y$, the initial trace space $H$, the boundary trace space $B$ and the space $\tilde{V}$ are separable, reflexive Banach spaces. Further assume that the parameter space $X_\varphi$ is a reflexive Banach space and let $\Theta^m_n$, for $n=1, \ldots,N$ and $m \in \mathbb{N}$ be closed parameter sets, each contained in a finite-dimensional space.
		\item Let $\Omega \subset \mathbb{R}^d $ with $d \in \mathbb{N}$ be a bounded Lipschitz domain and assume the following embeddings to hold:\vspace*{-0.1cm}
		\begin{gather*}
			H\hookrightarrow W, ~ ~ V\hookrightarrow H\hookrightarrow \tilde{V}\hookrightarrow W, ~ ~ V\hookdoubleheadrightarrow W^{\kappa,\hat{p}}(\Omega),\\
			L^{\hat{p}}(\Omega)\hookrightarrow V_k \hookrightarrow L^{\hat{q}}(\Omega) ~ \text{for} ~ 1\leq k\leq \kappa, ~ ~ V\hookrightarrow Y, ~ ~  L^{\hat{q}}(\Omega)\hookrightarrow W
		\end{gather*}
		and either $W^{\kappa,\hat{p}}(\Omega)\hookrightarrow \tilde{V}$ or $\tilde{V}\hookrightarrow W^{\kappa,\hat{p}}(\Omega)$ for some $1\leq \hat{q}\leq \hat{p} <\infty$.
		\item Let $T>0 $ and the extended spaces be defined by $\mathcal{W}=L^q(0,T;W),$\vspace*{-0.2cm}
		\begin{gather*}
			\mathcal{V} = L^p(0,T;V)\cap W^{1,p,p}(0,T;\tilde{V}), \mathcal{Y} = L^r(0,T;Y), \mathcal{B} = L^s(0,T;B),\\
			\mathcal{V}_0=\mathcal{V}_0^\times :=\mathcal{V}, \mathcal{V}_k = L^p(0,T;V_k), ~ ~ \mathcal{V}_k^\times = L^p(0,T;V_k^\times) ~ ~  \text{for} ~ 1\leq k\leq \kappa
		\end{gather*}
		for some $1\leq p,q,r,s<\infty$ with $p\geq q$, $p\geq s$. We refer to \cite[Chapter 7]{Roubíček2013} for the definition and properties of (Sobolev-)Bochner spaces.%
		
		\hspace{-1cm}\underline{Trace map:}
		\item Assume that the boundary trace map $\gamma: \mathcal{V} \rightarrow \mathcal{B}$ is linear and continuous.

		\hspace{-1cm}\underline{Measurement operator:}
		\item Suppose that the operator $K^m:\mathcal{V}^{N}\to\mathcal{Y}$ is weak-weak continuous for $m\in \mathbb{N}$.
		
		\hspace{-1cm}\underline{Energy functionals:}
		\item Assume that the discrepancy term $\mathcal{D}_{\BC}: \mathcal{B}^{N}\to [0,\infty]$ is weakly lower semicontinuous, coercive and fulfills $\mathcal{D}_\BC(z)=0$ iff $z=0$. Suppose that the regularization functional $ \mathcal{R}: X_\varphi^{N\times L}\times \mathcal{V}^{N\times L}\times \Theta^m\times H^{N\times L}\times\mathcal{B}^{N\times L}\to [0,\infty]$ is coercive in its first three components and weakly lower semicontinuous. Further suppose that there exists $(\varphi, u, \theta,u_0,g)\in \mathbf{D}(\mathcal{R})$ with $(\gamma(u^l)-g^l)_l\subseteq \mathbf{D}(\mathcal{D}_\BC)$ where $\mathbf{D}(\mathcal{D}_{\BC})$ and $\mathbf{D}(\mathcal{R})$ denote the domains of the respective functionals.
	\end{enumerate}
\end{assumption}
The next assumption concerns general properties on the parameterized nonlinearities that will be needed for wellposedness.

\begin{assumption}[Parameterized approximation classes $(\mathcal{F}^m_n)_n$]
	\label{ass_param_app_class}
	\hspace{1cm}\\
	\underline{Nemytskii operators:}
	\begin{enumerate}[label=\roman*)]
		\item Assume that $f_{\theta_n,n}\in \mathcal{F}^m_n$ with $\mathcal{F}^m_n$ defined as in \eqref{param_approx_classes} induce well-defined Nemytskii operators $f_{\theta_n,n}:(\otimes_{k=0}^\kappa \mathcal{V}_k^\times)^N\to \mathcal{W}$ via
		\[
		[f_{\theta_n,n}((v^k)_{0\leq k\leq\kappa})](t)(x)=f_{\theta_n,n}(t,(v^k(t,x))_{0\leq k\leq\kappa}).\vspace{0.2cm}
		\]
		\pagebreak[2]
		\hspace{-1cm}\underline{Strong-weak continuity:}
		\item Suppose that for each $f_{\theta_n,n}\in \mathcal{F}^m_n$ the map $$\Theta_n^m\times (\otimes_{k=0}^\kappa L^p(0,T;L^{\hat{p}}(\Omega)^{p_k}))^N \ni(\theta_n, v)\mapsto f_{\theta_n, n}(v)\in L^q(0,T;L^{\hat{q}}(\Omega))$$ is strongly-weakly continuous.
	\end{enumerate}
\end{assumption}
We require an analogous assumption for the physical PDE-term.	
\begin{assumption}[Known physical term]
	\label{ass_phys_term}
	\hspace{1cm}\\
	\underline{Nemytskii operators:}
	\begin{enumerate}[label=\roman*)]
		\item Assume that the $F_n$ induce well-defined Nemytskii operators
		\[
		F_n:\mathcal{V}^N \times X_\varphi\to \mathcal{W} ~ ~ \text{with} ~ ~ [F_n(v,\varphi)](t)=F_n(t,v(t),\varphi).
		\]
		\hspace{-1cm}\underline{Weak-closedness:}
		\item Suppose that the $F_n:\mathcal{V}^N \times X_\varphi\to \mathcal{W}$ are weakly closed.
	\end{enumerate}
\end{assumption}

Finally, to obtain our uniqueness results, we need to impose more regularity both on the state space and the approximation class. 
For that, recall the definition of the differential operator $\mathcal{J}_\kappa$ in \eqref{Jdifferential} and note that, as we will show in Lemma \ref{lem:Jbochner}, it follows from Assumption \ref{ass_init_set} that the $\mathcal{J}_\kappa$ induce suitable Nemytskii operators such that the following assumption makes sense notationally.

\begin{assumption}[Uniqueness]
	\label{ass_uniqueness}
	\hspace{1cm}\\
	\vspace*{-0.5cm}\\
	\underline{Regularity:}
	\begin{enumerate}[label=\roman*)]
		\item Assume that there exists a constant $c_{\mathcal{V}}>0$ such that $$\Vert \mathcal{J}_\kappa v\Vert_{L^\infty((0,T)\times\Omega)}\leq c_{\mathcal{V}}\Vert v\Vert_{\mathcal{V}} ~ ~ \text{for all} ~ v\in \mathcal{V}.$$
		\item For $D = 1+N\sum_{k=0}^\kappa p_k$, $1\leq n\leq N$, $m\in\mathbb{N}$ suppose that $\mathcal{F}_n^m\subseteq W^{1,\infty}_{\loc}(\mathbb{R}^{D})$.
		
		\item Suppose that the full measurement data $\hat{y}\in \mathcal{Y}^L$ is such that there exist admissible functions $\hat{f}\in W^{1,\infty}(\mathbb{R}^D)^N$, $\hat u \in \mathcal{V}^{N \times L}$, $\hat \varphi \in X_\varphi^{N\times L}$, $\hat u_0\in H^{N \times L}$ and $\hat g\in \mathcal{B}^{N \times L}$ solving \eqref{rdsystem} such that $K^\dagger \hat u^l = \hat y ^l$ for all $l=1,\ldots,L$.
		
		\hspace{-1cm}\underline{Approximation capacity of $\mathcal{F}^m$ for $f\in W^{1,\infty}_{\loc}(\mathbb{R}^{D})^N$:}
		\item The approximation capacity condition is considered to be satisfied for a fixed $f\in W^{1,\infty}_{\loc}(\mathbb{R}^{D})^N$ if for any bounded domain $U\subseteq \mathbb{R}^{D}$ there exist a monotonically increasing $\psi:\mathbb{N}\to \mathbb{R}$ and $c,\beta>0$ such that for $\Vert \cdot \Vert$ denoting some $l^p$-Norm for $1\leq p\leq \infty$ there exist parameters $\theta^m \in \Theta ^m$  with
		\begin{align}
			\label{f_approximation1}
			\Vert f- f_{\theta^m}\Vert_{L^\infty(U)}\leq cm^{-\beta}, \qquad \Vert \theta^m\Vert\leq \psi(m)
		\end{align}
		and $\Vert \nabla f_{\theta^m}\Vert_{L^\infty(U)}\to \Vert \nabla f\Vert_{L^\infty(U)}$ as $m\to \infty$.
		\pagebreak[2]
		
		\hspace{-1cm}\underline{Measurement operator:}
		\item Suppose that for any weakly convergent sequence $(u^m)_m\subset \mathcal{V}^N$ it holds true that
		\begin{equation}
			\label{operator_conv_weak}
			K^mu^m-K^\dagger u^m\to 0 \quad \text{in} ~ \mathcal{Y} \quad \text{as} ~ m \to \infty.
		\end{equation}
	 Assume that $K^\dagger$ is injective and weak-strong continuous.

		\hspace{-1cm}\underline{Regularization functional:}
		\item Let $ \mathcal{R}_0: X_\varphi^{N\times L}\times \mathcal{V}^{N\times L}\times H^{N\times L}\times\mathcal{B}^{N\times L} \to [0,\infty]$ be strictly convex in its first component. Assume that there exists a monotonically increasing function $\pi:[0,\infty)\to [0,\infty)$ (e.g. the $p$-th root) such that for $v\in \mathcal{V}^{N\times L}$
		\[
		\Vert v\Vert_{\mathcal{V}}\leq \pi(\mathcal{R}_0(\cdot,v, \cdot,\cdot)).
		\]
	    Let $\mathcal{R}:X_\varphi^{N\times L}\times \mathcal{V}^{N\times L}\times \otimes_n \Theta_n^m\times H^{N\times L}\times\mathcal{B}^{N\times L} \rightarrow [0,\infty]$ be given as
		\[
		\mathcal{R}(\varphi, u,\theta,u_0,g)=\mathcal{R}_0(\varphi, u,u_0,g)+ \nu \|\theta\| + \Vert f_\theta\Vert_{L^\rho(U)}^\rho+\Vert\nabla f_\theta\Vert_{L^{\infty}(U)},
		\]
		for $1<\rho<\infty$ and $U \subset \mathbb{R}^D$ a bounded Lipschitz domain.
		\item Assume that $U$ is sufficiently large such that it contains $\{z \in \mathbb
		{R}^D \,:\, \|z\|\leq \delta\}$ with $\delta = T+c_\mathcal{V}\pi(\hat{C})$
		and $\hat{C}\geq \mathcal{R}_0(\hat{\varphi}, \hat{u}, \hat{u}_0,\hat{g})+\Vert \hat{f}\Vert_{L^\rho(\mathbb{R}^D)}^\rho+\Vert \nabla\hat{f}\Vert_{L^\infty(\mathbb{R}^D)}+1$ an a-priori estimate on the admissible functions as in Assumption \ref{ass_uniqueness}, iii).

		\pagebreak[2]
		
		\hspace{-1cm}\underline{Physical term:}
		\item Suppose that $X_\varphi\ni\varphi\mapsto F(t, u,\varphi)\in W^N$ is affine for $u\in V^N$ and $t\in(0,T)$. Assume that $F:\mathcal{V}^N \times X_\varphi\to \mathcal{W}^N$ is weakly continuous.
	\end{enumerate}
\end{assumption}

The following remarks discuss some aspects of the above assumptions.	
\begin{remark}[Examples]
	In the next two subsections we provide examples of approximation classes $\mathcal{F}_n^m$ and physical terms $F$ where Assumptions \ref{ass_init_set} to \ref{ass_uniqueness} hold. In particular, we show that Assumption \ref{ass_param_app_class} together with ii) and iv) in Assumption \ref{ass_uniqueness} hold in case $\mathcal{F}_n^m$ is chosen as a suitable class of neural networks and $f$ in Assumption \ref{ass_uniqueness}, iv) has a suitable regularity.
\end{remark}
\begin{remark}[Compact embedding of state space]
	\label{rem:choice_V}

	A possible choice of the space $V$ satisfying the compact embedding in Assumption \ref{ass_init_set} is $V=W^{\kappa+\tilde \kappa ,p_0}(\Omega)$ for $1<p_0<\infty, \tilde \kappa \in\mathbb{N}$ fulfilling either $\tilde \kappa p_0<d$ with $1\leq \hat{p}<\frac{dp_0}{d-\tilde \kappa p_0}$ or $\tilde \kappa p_0=d$ with $1\leq \hat{p}<\infty$ due to the Rellich-Kondrachov Theorem (see e.g. \cite[Theorem 6.3]{Adams2003} and \cite[§5.7]{Evans2010}). The spaces $V_k$ can be chosen as $V_k=L^{\hat{p}}(\Omega)$ for $1\leq k\leq \kappa$.
\end{remark}
\begin{remark}[Role of operator $\mathcal{J}_\kappa $]
	As the nonlinearities $f_{\theta_n,n}$ operate pointwise in space and time, the operator $\mathcal{J}_\kappa $ is needed to allow for a dependence of $f_{\theta_n,n}$ also on derivatives of the state. For the physical term $F$ on the other hand, an explicit incorporation of derivatives is not necessary, as $F$ does not act pointwise in space but rather directly on $V $.
\end{remark}
\begin{remark}[Regularity condition extended state space]
	\label{rem:regul_cond}
	The regularity condition in Assumption \ref{ass_uniqueness}, i) ensures that a weakly convergent sequence in the extended state space attains uniformly bounded higher order derivatives. This continuous embedding can be achieved by imposing additional regularity on the state space $V$ and thus, on its temporal extension $\mathcal{V}$. Indeed, as $\mathcal{V}=W^{1,p,p}(0,T;V,\tilde{V})$ by \cite[Lemma 7.1]{Roubíček2013} using $V\hookrightarrow\tilde{V}$ it follows that 
	\begin{align}
		\label{embedd_cont1}
		\mathcal{V}\hookrightarrow \mathcal{C}(0,T;\tilde{V}).
	\end{align}
	If $\tilde{V}$ is sufficiently regular, e.g. fulfills some embedding of the form
	\begin{align}
		\label{embedd_cont2}
		\tilde{V}\hookrightarrow W^{\kappa+\tilde{\kappa}, \eta}(\Omega)
	\end{align} 
	with $\tilde{\kappa}\eta > d = \dim(\Omega)$, then
	\begin{align}
		\label{embedd_cont3}
		\mathcal{C}(0,T; W^{\tilde{\kappa},\eta}(\Omega))\hookrightarrow L^\infty((0,T)\times \Omega).
	\end{align}
	Combining the embeddings \eqref{embedd_cont1}, \eqref{embedd_cont2} and \eqref{embedd_cont3} together with $D^\beta v(t)\in W^{\tilde{\kappa},\eta}(\Omega)$ for $v\in \mathcal{V}$ and $t\in(0,T)$ yields Assumption \ref{ass_uniqueness}, i).
\end{remark}
\begin{remark}[Convergence of measurement operators]
	\label{rem:conv_meas}
	Note that the required convergence in \eqref{operator_conv_weak} is rather weak in practice. In fact it holds for (potentially nonlinear) operators $(K^m)_m$ converging to $K^\dagger$ uniformly on bounded sets in $\mathcal{V}$ (since weakly convergent sequences are bounded by \cite[Proposition 3.5 (iii)]{Brezis2010}. This, in particular encompasses bounded linear operators converging in the operator norm.
\end{remark}
\begin{remark}[Regularity of admissible function]
	\label{remark:regularity_admiss}
	The assumption $\hat{f}\in W^{1,\infty}(\mathbb{R}^D)^N$ in Assumption \ref{ass_uniqueness}, iii), seems to be restrictive. However, since an admissible state $\hat{u}$ attains uniformly bounded $\mathcal{J}_\kappa \hat{u}$ by Assumption \ref{ass_uniqueness}, i), the term $\hat{f}$ only acts on a compact subset of $\mathbb{R}^D$ in \eqref{rdsystem} due to its composition to $\mathcal{J}_\kappa \hat{u}$. Thus, whenever a $\hat{f}\in W^{1,\infty}_{loc}(\mathbb{R}^D)^N$ solving \eqref{rdsystem} exists, there exists w.l.o.g. a solution which is globally $W^{1,\infty}(\mathbb{R}^D)^N$-regular as a consequence of the following extension argument. For a sufficiently large and regular subset $U\subset \mathbb{R}^D$ meeting the regularity conditions in the references below and containing $(t,\mathcal{J}_\kappa \hat{u}(t,x))$ for a.e. $(t,x)\in (0,T)\times \Omega$ define $\hat{f}_0:\mathbb{R}^D\to \mathbb{R}^N$ with $\hat{f}_0=\hat{f}$ on $U$. The function $\hat{f}_0\in W^{1,\infty}(U)^N$ is then extendable to some $\hat{f}_0\in W^{1,\infty}(\mathbb{R}^D)^N$ due to regularity of $U$. The result in \cite[Chapter VI, Theorem 5]{stein} treats this in a more general framework that includes general Sobolev spaces and minimal smoothness conditions on the domain $U$. We refer to \cite[Theorem 5.24]{Adams2003} for an outline of the proof. A proof of the extension result, but for first-order Sobolev spaces and stricter smoothness assumptions on the underlying domain, can be found in \cite[Theorem 9.7]{Brezis2010}.
\end{remark}
\begin{remark}[Regularity of admissible state]
	The existence of an admissible solution to \eqref{rdsystem} as required in Assumption \ref{ass_uniqueness}, iii), with state regularity $\mathcal{V}$ may in practice be difficult to guarantee in view of the regularity typically expected from the underlying equation. For a concrete example we refer to the discussion of the transport equation following Proposition \ref{prop:demo} in the introduction. Nevertheless, it is important to note that this regularity requirement can be interpreted as an implicit assumption on the parameter space $X_\varphi$, as briefly highlighted for the transport equation, since the regularity of the state is generally inherited from the regularity of the model and that of the input parameters.
\end{remark}
\begin{remark}[Choice of $U$]
	In view of Assumption \ref{ass_uniqueness}, vii) one can circumvent choosing a sufficiently large radius $\delta$ depending on all admissible functions as in Assumption \ref{ass_uniqueness}, iii) as follows. For a closed and convex set $U\subset \mathbb{R}^D$ containing $(t, \mathcal{J}_\kappa \hat{u}(t,x))$ for a.e. $(t,x)\in (0,T)\times \Omega$ one can define $P_U$ to be the metric projector onto $U$. Now considering the term $f_\theta(P_U(t,\mathcal{J}_\kappa u^l))$ in \eqref{min_prob} instead of $f_\theta(t,\mathcal{J}_\kappa u^l)$ the results of this work still apply. Of course well-definedness of $f_\theta\circ P_U$ in function space has to be argued first. From a model perspective the composition $f_\theta\circ P_U$ can be interpreted as part of the parameterized approximation classes \eqref{param_approx_classes}. For the specific case of neural networks this generalizes to applying a sufficiently regular sigmoidal-type function as activation function in the first layer.
\end{remark}
\begin{remark}[A priori bounded states]
	It is possible to circumvent both the assumption $\hat{f}\in W^{1, \infty}(\mathbb{R}^D)^N$ and the regularity condition in Assumption \ref{ass_uniqueness}, i), if it is a priori known that the $\mathcal{J}_\kappa u$ are uniformly bounded.\\
	For instance, in case $\kappa =0$, the state $u$ may model e.g. some chemical concentration which is a priori bounded in the interval $[0,1]$.
\end{remark}

\begin{remark}[Boundary trace map]
	In view of Assumption \ref{ass_init_set}, i) if $V\hookrightarrow W^{\kappa+1, \hat{p}}(\Omega)$, a possible choice of the trace map $\gamma: \mathcal{V}\to \mathcal{B}$ is the (pointwise in time) Dirichlet trace operator $\gamma_0:V \rightarrow B$ (see \cite[Chapter 5]{Adams2003}) with $B=L^b(\partial \Omega)$ for $b$ as follows. Following \cite[Theorem 5.36]{Adams2003} for instance, $\gamma_0 : W^{\kappa,\hat{p}}(\Omega)\to L^b(\partial\Omega)$ (and hence $\gamma$) is weak-weak continuous if $\kappa \hat{p}\leq d$ and $\hat{p}\leq b\leq \frac{(d-1)\hat{p}}{d-\kappa\hat{p}}$ (with $\hat{p}\leq b<\infty$ if $\kappa \hat{p} = d$). The choice of the (pointwise in time) Neumann trace operator (see \cite[Chapter 2]{Necas2011})) may be treated similarly with the same conditions on $b$.
	
	The discrepancy functional $\mathcal{D}_\BC$ can for instance be given as the indicator functional by $\mathcal{D}_\BC(w) = 0$ if $w=0$ and $\mathcal{D}_\BC(w) = \infty $ else, acting as a hard constraint, or as soft constraint via $\mathcal{D}_\BC(w)=\sum_{n}\Vert w_n\Vert_\mathcal{B}^s$ for $w\in \mathcal{B}^{N}$.
	In both cases $\mathcal{D}_\BC$ is weakly lower semicontinuous, coercive and fulfills $\mathcal{D}_\BC(z)=0$ iff $z=0$.
\end{remark}

\subsection{Neural networks}
\label{subsec:neuralnet}
In this section we discuss Assumption \ref{ass_param_app_class} together with ii) of Assumption \ref{ass_uniqueness} in case $(\mathcal{F}_n^m)_n$ are chosen as suitable classes of feed forward neural networks. Furthermore, we provide results from literature that ensure Assumption \ref{ass_uniqueness}, iv) for specific network architectures and suitably regular $f$. Moreover, we address also Assumption \ref{ass_uniqueness}, vi).

\begin{definition}
	\label{def:nn}
	Let $L\in\mathbb{N}$, $(n_l)_{0\leq l\leq L}\subseteq \mathbb{N}$, $\sigma \in \mathcal{C}(\mathbb{R},\mathbb{R})$ and $\theta_l=(w^l, \beta^l)$ with $w^l\in \mathcal{L}(\mathbb{R}^{n_{l-1}},\mathbb{R}^{n_l})\simeq\mathbb{R}^{n_l\times n_{l-1}}$ and $\beta^l\in \mathbb{R}^{n_l}$ for $1\leq l\leq L$. Furthermore, let $L_{\theta_l}:\mathbb{R}^{n_{l-1}}\to\mathbb{R}^{n_l}$ via $L_{\theta_l}(z):=\sigma(w^lz+\beta^l)$ for $1\leq l\leq L-1$ together with $L_{\theta_L}(z):= w^Lz+\beta^L$. Then a fully connected feed forward neural network $\mathcal{N}_\theta$ with activation function $\sigma$ is defined as $\mathcal{N}_\theta=L_{\theta_L}\circ\dots\circ L_{\theta_1}$. The input dimension of $\mathcal{N}_\theta$ is $n_0$ and the output dimension $n_L$. Moreover, we define the width of the network by $\mathcal{W}(\mathcal{N})=\max_l n_l$ and the depth by $\mathcal{D}(\mathcal{N})=L$.
\end{definition}
\begin{definition}[Model for $(\mathcal{F}_n^m)_n$]
	\label{def:model_fi}
	Let $\sigma:\mathbb{R}\to \mathbb{R}$ be locally Lipschitz continuous. Then we define for $L,(n_l)_l$ depending on $m\in \mathbb{N}$ and $\Theta_n^m\subseteq \otimes_{l=1}^L \mathbb{R}^{n_l\times n_{l-1}}\times\mathbb{R}^{n_l}$ for $1\leq n\leq N$ with $n_0=1+N\sum_{k=0}^\kappa p_k$ and $n_L=1$ the class of parameterized approximation functions of the unknown terms,
	$$\mathcal{F}_n^m = \left\{\mathcal{N}_\theta ~|~ \theta\in \Theta_n^m\right\},$$ for $n=1,\dots, N$
	where each $\mathcal{N}_\theta: (0,T)\times(\otimes_{k=0}^\kappa \mathbb{R}^{p_k})^N\to \mathbb{R}$ is a fully connected feed forward neural network with activation function $\sigma$.
\end{definition}
\begin{remark} \label{rem:activation_examples}
	Commonly used activation functions which are globally Lipschitz continuous include the softplus, saturated activation functions such as the sigmoid, hyperbolic tangent and Gaussian but also ReLU and some of its variations like the leaky ReLU and exponential linear unit amongst others. An example of a locally Lipschitz continuous activation function that is not globally Lipschitz continuous is the Rectified Quadratic Unit (ReQU).
\end{remark}

Now as first step, we focus on the induction of well-defined Nemytskii operators and strong-weak continuity as specified in Assumption \ref{ass_param_app_class}. Following \cite[Lemma 4, Lemma 5]{AHN23}, the former can be shown for general, continuous activation functions and the latter for locally Lipschitz continuous activation functions, both under the additional regularity assumption Assumption \ref{ass_uniqueness}, i). Here, we focus on a different strategy that does not require Assumption \ref{ass_uniqueness}, i), but assumes a globally Lipschitz continuous activation function. Note that in this section we write generically $\Theta$ instead of $\Theta_n^m$, as the results below on neural networks hold for general parameter sets as in Definition \ref{def:nn}.
The following result, whose proof can be found in Appendix \ref{app:neural_networks}, shows that for $(\mathcal{F}_n^m)_n$ as in Definition \ref{def:model_fi} and Lipschitz continuous $\sigma$ the properties in Assumption \ref{ass_param_app_class} follow.
\begin{proposition}
		\label{prop:ass3_nn}
		Let Assumption \ref{ass_init_set} hold true. Suppose that $\sigma\in \mathcal{C}(\mathbb{R},\mathbb{R})$ is Lipschitz continuous with constant $L_\sigma$ (w.l.o.g. $L_\sigma\geq1$). Then $\mathcal{N}_\theta: (0,T)\times(\otimes_{k=0}^\kappa \mathbb{R}^{p_k})^N\to \mathbb{R}$ induces a well-defined Nemytskii operator $\mathcal{N}_\theta: (\otimes_{k=0}^\kappa \mathcal{V}_k^\times)^N\to L^p(0,T;L^{\hat{q}}(\Omega))$ via $[\mathcal{N}_\theta( u)](t)=\mathcal{N}_\theta(u(t,\cdot))$. The same applies to $\mathcal{N}_\theta:(\otimes_{k=0}^\kappa \mathcal{V}_k^\times)^N\to\mathcal{W}$. Furthermore, $$\mathcal{N}:\Theta\times (\otimes_{k=0}^\kappa L^p(0,T;L^{\hat{p}}(\Omega)^{p_k}))^N\to L^q(0,T;L^{\hat{q}}(\Omega)),\quad (\theta, v)\mapsto\mathcal{N}_\theta(v)$$ is strongly-strongly continuous.
\end{proposition}
\begin{proof}
	See Appendix \ref{app_subsec:nn_prop1}.
\end{proof}
Assuming a proper choice of the regularization functional $\mathcal{R}_0$, an important question is whether regularizing via
\begin{align}
	\label{regularization_functional}
	\mathcal{R}(\varphi, u,\theta,u_0,g)=\mathcal{R}_0(\varphi, u,u_0,g)+ \nu \|\theta\| + \Vert f_\theta\Vert_{L^\rho(U)}^\rho+\Vert\nabla f_\theta\Vert_{L^{\infty}(U)}
\end{align}
is justified for the class of parameterized approximation functions as introduced in Definition \ref{def:model_fi} based on feed forward neural networks. This includes both $W^{1,\infty}_{loc}$-regularity of the classes $\mathcal{F}_n^m$ and weak lower semicontinuity of 
\eqref{regularization_functional} as required by Assumption \ref{ass_init_set}. For the latter, in turn, it suffices to verify for fixed $n=1, \dots, N$ weak lower semicontinuity of the map
\[
\Theta\ni\theta\mapsto \Vert \mathcal{N}_\theta\Vert_{L^\rho(U)}+\Vert \nabla \mathcal{N}_\theta\Vert_{L^\infty(U)},
\]
again for a generic parameter set $\Theta$ in Definition \ref{def:nn}. By weak lower semicontinuity of the $L^\rho-$norm and strong-strong continuity of $\Theta\ni \theta\mapsto \mathcal{N}_\theta \in L^{\infty}(U)$ (as follows from \eqref{Ndiffestimation} in the proof of Proposition \ref{prop:ass3_nn}), for this, it remains to argue weak lower semicontinuity of 
\begin{align}
	\label{map_for_lsc}
\Theta\ni \theta\mapsto \Vert \nabla \mathcal{N}_\theta\Vert_{L^\infty(U)}.
\end{align}
The next result, which is also proved in Appendix \ref{app:neural_networks}, shows that ii) and vi) in Assumption \ref{ass_uniqueness} in fact hold true in this particular framework. In view of weak lower semicontinuity of \eqref{map_for_lsc} we restrict ourselves to the cases of Lipschitz continuous, $\mathcal{C}^1$-regular activation functions, and the Rectified Linear Unit (ReLU).
\begin{proposition}
	\label{prop:ass5_nn}
	Assume that $\sigma\in \mathcal{C}(\mathbb{R},\mathbb{R})$ is locally Lipschitz continuous and let $(\mathcal{F}_n^m)_n$ be given as in Definition \ref{def:model_fi}. Then for $1\leq n\leq N$, $m\in \mathbb{N}$ it holds true that $$\mathcal{F}_n^m\subseteq W^{1,\infty}_\loc(\mathbb{R}^{D}).$$
	Now fix some bounded subset $U\subseteq \mathbb{R}^D$. Let the activation function $\sigma$ either fulfill $\sigma \in \mathcal{C}^1(\mathbb{R},\mathbb{R})$ or let $\sigma$ be the Rectified Linear Unit. Then for $(\theta^m)_m\subseteq \Theta$ with $\theta^m\to \theta\in \Theta$ as $m\to \infty$ it holds
	\[
	\Vert\nabla\mathcal{N}_\theta\Vert_{L^\infty(U)}\leq \liminf_{m\to\infty}\Vert\nabla\mathcal{N}_{\theta^m}\Vert_{L^\infty(U)}.
	\]
\end{proposition}
\begin{proof}
	See Appendix \ref{app_subsec:nn_prop2}.
\end{proof}
We conclude the considerations on neural networks by discussing results from literature ensuring that  Assumption \ref{ass_uniqueness}, iv) holds true for suitably regular $f$. The corresponding estimate in \eqref{f_approximation1} is closely related to universal approximation theory for neural networks, an active field of research which is presented e.g. in \cite{devore21, elbraecther21, gribonval20} and the references therein. Determining suitable functions $\psi$ regarding \eqref{f_approximation1} for these approximation results is, however, not usually considered in works on neural network approximation theory and is in general not trivial. For an outline of state of the art results dealing with suitable estimates on $\psi$ we refer to the comparative overview presented in \cite{morina_holler/online}. The result in \cite{morina_holler/online} shows that a slight modification of the nearly optimal uniform approximation result of piecewise smooth functions by ReLU networks in \cite{lu_main} grows polynomially and in general yields a better bound than the other results providing polynomial bounds except for \cite{belomestny23} which uses the ReQU activation function. As discussed in \cite{morina_holler/online}, the following (simplified) results hold true.
\begin{proposition}
	\label{prop:morina_holler}
	Let the parameterized classes in \eqref{param_approx_classes} be given by neural networks of the form in \cite[Theorem 4]{morina_holler/online} and $f\in \mathcal{C}^q(U)$ for some $q\geq 1$. Then \eqref{f_approximation1} in Assumption \ref{ass_uniqueness}, iv) holds true with $\beta=2q/D$ (with the networks attaining constant depth and width of order $m\log m$) and $\psi(m) = \tilde{c} m^{\frac{6q-3}{D}}$ for some constant $\tilde{c}>0$.
\end{proposition}
\begin{proposition}
	\label{prop:belo}
	Let the parameterized classes in \eqref{param_approx_classes} be given by neural networks of the form in \cite[Theorem 1]{belomestny23} and $f\in \mathcal{C}^q(U)$ for some $q\geq 1$. Then \eqref{f_approximation1} in Assumption \ref{ass_uniqueness}, iv) holds true with $\beta=q/D$ (with the networks attaining constant depth and width of order $m$) and $\psi(m) = \tilde{c}$ for some constant $\tilde{c}>0$.
\end{proposition}
Recall that a similar result as in Proposition \ref{prop:ass3_nn} and Proposition \ref{prop:ass5_nn} holds true for locally Lipschitz continuous activations (e.g. the ReQU activating the networks in \cite[Theorem 1]{belomestny23}) under the additional regularity Assumption \ref{ass_uniqueness}, i) as discussed above directly after Remark \ref{rem:activation_examples}. 
It remains to discuss the convergence of $\Vert \nabla f_{\theta^m}\Vert_{L^\infty(U)}\to \Vert \nabla f\Vert_{L^\infty(U)}$ as $m\to \infty$. The result in \cite[Theorem 1]{belomestny23} realizes also the simultaneous approximation of higher order derivatives at the loss of a poorer approximation rate. The work by \cite{yang25} considers approximation in $W^{m,p}$-Sobolev norms for integer $m\geq 2$. Note that both results are stronger than the previously stated convergence. The works \cite{guehring20, raslan21} cover $W^{1,\infty}$-approximation by ReLU neural networks, thus, in particular inferring this type of convergence. However, a parameter estimation as stated in Assumption \ref{ass_uniqueness}, iv) is not covered. Alternatively, e.g. for the result in \cite[Theorem 4]{morina_holler/online}, one might eventually apply a lifting technique as outlined in Appendix \ref{app_subsec:lifting}. This is possible in case $f$ attains higher regularity such as $W^{2,\infty}$- or $\mathcal{C}^2$-regularity
\subsection{Physical term}
\label{subsec:physicalterm}	
In the next subsections we verify Assumption \ref{ass_phys_term} in the setup of affine linear physical terms and in the general setup of nonlinear physical terms, and provide examples.
\subsubsection{Linear case}
We assume that the physical term is given in linear form for some fixed $\omega\in \mathbb{N}_0$ by
\begin{align}
	\label{phys_term_linear}
	F(t,(u_n)_{1\leq n\leq N}, \varphi)&= \Psi(t,\varphi)+\sum_{n=1}^N\mathcal{J}_\omega u_n\cdot \Phi_n(t,\varphi)
\end{align}
\[
\text{with} ~ ~ ~ \mathcal{J}_\omega u_n\cdot \Phi_n(t,\varphi) :=\sum_{0\leq \vert\beta\vert\leq \omega} D^\beta u_n\cdot\Phi_{n,\beta}(t,\varphi)
\]
for $t\in(0,T), (u_n)_{1\leq n\leq N}\in V^N, \varphi\in X_\varphi$,
where we suppose that $(V\hookrightarrow)\tilde{V}\hookrightarrow W^{\omega,\hat{p}}(\Omega)$. The functionals $\Psi$ and $(\Phi_{n,\beta})_{n,\beta}$ are given as $\Psi:(0,T)\times X_\varphi \to L^{\hat{q}}(\Omega)$ and $\Phi_{n,\beta}:(0,T)\times X_\varphi \to L^{s_\beta}(\Omega)$ for $1\leq n\leq N$, $0\leq \vert\beta\vert\leq \omega$ and some suitable $1\leq s_{\beta}\leq \infty$ (to be determined below).
Since $\Psi(t,\varphi) \in W$ due to  $L^{\hat{q}}(\Omega)\hookrightarrow W$, in order to show that $F(t,(u_n)_{1\leq n\leq N}, \varphi)\in W$ (i.e., that $F$ is well-defined) it suffices to choose the $s_\beta$ such that $ \mathcal{J}_\omega u_n\cdot \Phi_n(t,\varphi)\in W$. This can be done as follows.
For $(u_n)_{1\leq n\leq N}\in V^N$ we have that $D^\beta u_n\in W^{\omega-\vert\beta\vert,\hat{p}}(\Omega)$ for $0\leq \vert \beta\vert\leq \omega$ and $D^\beta u_n\cdot \Phi_{n,\beta}(t,\varphi)\in L^{\hat{q}}(\Omega)\hookrightarrow W$, which shows welldefinedness of \eqref{phys_term_linear}, if one of the following conditions on $s_\beta$ is fulfilled:
\begin{align}
	\label{conditions_sbeta}
	\begin{cases}
		\sbullet \quad \frac{\omega-\vert\beta\vert}{d}>\frac{1}{\hat{p}}-\frac{1}{\hat{q}}+\frac{1}{s_\beta}\quad \text{and} \quad \hat{q}\leq s_\beta\leq \frac{\hat{p}\hat{q}}{\hat{p}-\hat{q}}, ~ s_\beta <\infty\\
		\sbullet \quad \frac{\omega-\vert\beta\vert}{d}=\frac{1}{\hat{p}}-\frac{1}{\hat{q}}+\frac{1}{s_\beta}\quad \text{and} \quad \hat{q}< s_\beta\leq \frac{\hat{p}\hat{q}}{\hat{p}-\hat{q}}, ~ s_\beta <\infty\\
		\sbullet \quad s_\beta = \frac{\hat{p}\hat{q}}{\hat{p}-\hat{q}}
	\end{cases}
\end{align}
In the first two cases welldefinedness of \eqref{phys_term_linear} is a consequence of \cite[Theorem 6.1]{behzadan2021} (see also \cite[Remark 6.2, Corollary 6.3]{behzadan2021} for the generalization to bounded Lipschitz domains). In the last case (with $\frac{\hat{p}\hat{q}}{\hat{p}-\hat{q}}=\infty$ for $\hat{p}=\hat{q}$ which is important as $s_\beta =\infty$ is excluded in the first two cases) welldefinedness follows by $V\hookrightarrow W^{\omega, \hat{p}}(\Omega)$ and Hölder's inequality. To achieve Assumption \ref{ass_phys_term}, ii) we need stricter conditions than \eqref{conditions_sbeta} in general as outlined below. The following result, proven in Appendix \ref{app:physical_term}, covers Assumption \ref{ass_phys_term} in the linear setup.
\begin{proposition}
	\label{prop:ass4_linear}
	Let Assumption \ref{ass_init_set} hold true and $\tilde{V}\hookrightarrow W^{\omega,\hat{p}}(\Omega)$. Suppose that $t\mapsto \Phi_n(t,\varphi)$ and $t\mapsto \Psi(t,\varphi)$ are measurable for all $\varphi\in X_\varphi$ and $s_\beta$ fulfill \eqref{conditions_sbeta}. Assume that there exist functions $\mathcal{B}_1, \mathcal{B}_2:\mathbb{R}_{\geq 0}\to \mathbb{R}_{\geq 0}$ that map bounded sets to bounded sets and $\phi\in L^{\frac{pq}{p-q}}(0,T)$ (with $\phi\in L^{\infty}(0,T)$ if $p=q$), $\psi\in L^q(0,T)$ such that
	\begin{align}
		\label{growth_cond_lin_terms1}
		\Vert \Phi_{n,\beta}(t,\varphi)\Vert_{L^{s_\beta}(\Omega)}\leq \phi(t)\mathcal{B}_1(\Vert\varphi\Vert_{X_\varphi}), ~ ~ ~ 	\Vert \Psi(t,\varphi)\Vert_{L^{\hat{q}}(\Omega)}\leq \psi(t)\mathcal{B}_2(\Vert\varphi\Vert_{X_\varphi}).
	\end{align}
	Then $F$ in \eqref{phys_term_linear} induces a well-defined Nemytskii operator $F: \mathcal{V}^N\times X_\varphi\to \mathcal{W}$ with $$[F((u_n)_{1\leq n\leq N}, \varphi)](t) = F(t,(u_n(t))_{1\leq n\leq N}, \varphi)$$ for $(u_n)_{1\leq n\leq N}\in \mathcal{V}^N, \varphi\in X_\varphi$ and $t\in (0,T)$. Now suppose that $\Psi(t,\cdot):X_\varphi\to L^{\hat{q}}(\Omega)$ and $\Phi_{n,\beta}(t,\cdot):X_\varphi\to L^{s_\beta}(\Omega)$ are weakly continuous for almost every $t\in (0,T)$, additionally with $\frac{\omega-\vert\beta\vert}{d}>\frac{1}{\hat{p}}-\frac{1}{\hat{q}}+\frac{1}{s_\beta}$ if $\hat{q}=1$ or $s_\beta = \frac{\hat{p}\hat{q}}{\hat{p}-\hat{q}}$. Furthermore, suppose that either $\omega\leq \kappa$ or otherwise in case $\omega>\kappa$ the following additional conditions hold: 
	\begin{itemize}
		\item For each $0\leq \vert\beta\vert<\omega$ assume that there exists some $\hat{q}\leq c_\beta\leq \infty$ such that $W^{\omega-\vert\beta\vert,\hat{p}}(\Omega)\hookdoubleheadrightarrow L^{c_\beta}(\Omega)$ and that we have the additional growth condition
		\[
		\Vert \Phi_{n,\beta}(t,\varphi)\Vert_{\frac{c_\beta\hat{q}}{c_\beta-\hat{q}}}\leq \phi(t)\mathcal{B}_1(\Vert \varphi\Vert_{X_\varphi}).
		\]
		\item For $\vert\beta\vert=\omega$ assume that $\Phi_{n,\beta}(t,\cdot): X_\varphi\to L^{\frac{\hat{p}\hat{q}}{\hat{p}-\hat{q}}}(\Omega)$ is well-defined and weak-strong continuous for a.e. $t\in (0,T)$.
	\end{itemize}
	Then $\mathcal{V}^N\times X_\varphi\ni(u,\varphi)\mapsto F(u,\varphi)\in \mathcal{W}$ induced by \eqref{phys_term_linear} is weak-weak continuous.
\end{proposition}
\begin{proof}
	See Appendix \ref{app:linear}.
\end{proof}
To conclude this subsection we give the following example which is motivated by the parabolic problem considered in \cite[Chapter 4]{AHN23}. We restrict ourselves to a single equation which can be immediately generalized to general systems by introducing technical notation. Note that the space setup in the following example is consistent with Assumption \ref{ass_init_set}, but we do not discuss it in order to not distract from the central conditions on the parameters. In the example we suppose that the physical term is governed by a reaction-diffusion equation such that $\omega=2$ in \eqref{phys_term_linear}. Here, we assume that the general task consists in reconstructing additional unknown convection terms (which are of derivative order one) such that we can choose $\kappa=1$ in Assumption \ref{ass_init_set}. If only additional unknown reaction terms need to be reconstructed, one could choose $\kappa=0$.
\begin{example}
	Let $V=\tilde{V}=W^{2,\hat{p}}(\Omega)$, $W = L^{\hat{p}}(\Omega)$, $\mathcal{V}, \mathcal{W}$ as in Assumption \ref{ass_init_set}, $\kappa=1$, $\hat{p}=\hat{q}= 2$ and
	\[
	F(t,u,\varphi)= \nabla\cdot(a\nabla u)+cu
	\]
	for $t\in (0,T), u\in \mathcal{V}$ and $\varphi=(a,c)$ with $a\in W^{1,\gamma}(\Omega)$ for $3=d<\gamma<\infty$ and $c\in L^2(\Omega)$. Note that $X_\varphi= W^{1,\gamma}(\Omega)\times L^2(\Omega)$. Thus, the physical term $F$ attains a representation of the form in \eqref{phys_term_linear} with $\omega = 2$, $\Psi\equiv 0$ and under abuse of notation 
	\[
	\Phi_{\bar{0}}(t,\varphi)=c, ~ \Phi_{e_k}(t,\varphi)=\partial_{x_k}a, ~ \Phi_{2e_k}(t,\varphi)=a
	\]
	for $1\leq k\leq 3$ with $e_k$ the $k$-th unit vector in $\mathbb{R}^3$ and $\bar{0}=(0,0,0)$. Furthermore, we set $\Phi_\beta\equiv 0$ for $\beta\notin\left\{0e_k,e_k,2e_k\right\}_{1\leq k\leq 3}$. We verify the requirements on $\Phi$ in Proposition \ref{prop:ass4_linear} based on the following case distinction for $0\leq \vert\beta\vert\leq 2$.\\
	
	\noindent\textbf{Case 1.} $\vert \beta\vert=0$: For $s_{\bar{0}}=2$ (fulfills $\frac{\omega-\vert\beta\vert}{d}>\frac{1}{\hat{p}}-\frac{1}{\hat{q}}+\frac{1}{s_{\bar{0}}}$ and $\hat{q}\leq s_{\bar{0}}<\infty$) it holds
	\[
	\Vert \Phi_{\bar{0}}(t,\varphi)\Vert_{L^{s_{\bar{0}}}(\Omega)}=\Vert c\Vert_{L^{2}(\Omega)}\leq \Vert \varphi\Vert_{X_\varphi},
	\]
	proving a growth condition as in \eqref{growth_cond_lin_terms1} and weak continuity of $\Phi_{\bar{0}}(t,\cdot):X_\varphi\to L^{s_{\bar{0}}}(\Omega)$. Furthermore, since $W^{2,2}(\Omega)\hookdoubleheadrightarrow \mathcal{C}(\overline{\Omega})$ the additional conditions in Proposition \ref{prop:ass4_linear} apply with $c_{\bar{0}}=\infty$ by above considerations since $(\frac{1}{2}-\frac{1}{c_{\bar{0}}})^{-1}=2$.\\
	
	\noindent\textbf{Case 2.} $\vert \beta\vert=1$: For $\hat{q}\leq d< s_{e_k}\leq \gamma<\infty$ (fulfills $\frac{\omega-\vert\beta\vert}{d}>\frac{1}{\hat{p}}-\frac{1}{\hat{q}}+\frac{1}{s_\beta}$) it holds
	\[
	\Vert \Phi_{e_k}(t,\varphi)\Vert_{L^{s_{e_k}}(\Omega)}=\Vert \partial_{x_k}a\Vert_{L^{s_{e_k}}(\Omega)}\leq \vert\Omega\vert^{\frac{\gamma-s_{e_k}}{\gamma s_{e_k}}}\Vert \partial_{x_k}a\Vert_{L^\gamma(\Omega)}\leq \vert\Omega\vert^{\frac{\gamma-s_{e_k}}{\gamma s_{e_k}}}\Vert \varphi\Vert_{X_\varphi},
	\]
	proving a growth condition of the form in \eqref{growth_cond_lin_terms1}. Weak continuity of $\Phi_{e_k}(t,\cdot):X_\varphi\to L^{s_{e_k}}(\Omega)$ follows by the continuous embedding $W^{1,\gamma}(\Omega)\hookrightarrow W^{1,s_{e_k}}(\Omega)$. Furthermore, since $W^{1,2}(\Omega)\hookdoubleheadrightarrow L^6(\Omega)$ the additional conditions in Proposition \ref{prop:ass4_linear} apply with $c_{e_k}=6$ by above considerations since $(\frac{1}{2}-\frac{1}{c_{e_k}})^{-1}=3$ and $\gamma >3$.\\
	
	\noindent\textbf{Case 3.} $\vert \beta\vert=2$: We may choose $s_{2e_k}=\infty$. As $\gamma>d$ there exists some constant $c_\gamma>0$ such that $\Vert \cdot\Vert_{\mathcal{C}(\overline{\Omega})} \leq c_\gamma \Vert \cdot\Vert_{W^{1,\gamma}(\Omega)}$ yielding
	\[
	\Vert \Phi_{2e_k}(t,\varphi)\Vert_{L^{s_{2e_k}}(\Omega)}=\Vert a\Vert_{L^{\infty}(\Omega)}\leq c_\gamma \Vert a\Vert_{W^{1,\gamma}(\Omega)}\leq c_\gamma \Vert \varphi\Vert_{X_\varphi}
	\]
	and hence, a growth condition of the form in \eqref{growth_cond_lin_terms1}. As $W^{1,\gamma}(\Omega)\hookdoubleheadrightarrow \mathcal{C}(\overline{\Omega})$ by the Rellich-Kondrachov embedding, $\Phi_{2e_k}(t,\cdot):X_\varphi\to L^{s_{2e_k}}(\Omega)$ is weakly(-strongly) continuous, covering also the additional conditions in Proposition \ref{prop:ass4_linear}.\\
	
	\noindent Thus, the requirements on $\Phi$ and $\Psi$ in Proposition \ref{prop:ass4_linear} are fulfilled.
\end{example}
\subsubsection{Nonlinear case}
The following result, proven in Appendix \ref{app:nonlinear}, verifies Assumption \ref{ass_phys_term} for general nonlinear physical terms under stronger conditions. Note that instead of weak closedness in Assumption \ref{ass_phys_term}, ii) we show weak-weak continuity which is stronger.
\begin{proposition}
	\label{prop:ass4_nonlinear}
	Let Assumption \ref{ass_init_set} and the extended state space embedding
	\[
	\mathcal{V}\hookrightarrow\mathcal{C}(0,T;H)
	\]
	 hold true. Suppose that the $F_n(\cdot, \cdot, \varphi):(0,T)\times V^N\to W$ satisfy the Carathéodory condition, i.e., $t\mapsto F_n(t,v, \varphi)$ is measurable for $v\in V^N$ and $v\mapsto F_n(t,v,\varphi)$ is continuous for a.e. $t\in(0,T)$. Further assume that the $F_n$ satisfy the growth condition
	\begin{align}
		\label{growth_condition1}
		\Vert F_n(t,(v_n)_{1\leq n\leq N}, \varphi)\Vert_W \leq \mathcal{B}_0(\Vert\varphi\Vert_{X_\varphi}, \sum_{n=1}^N\Vert v_n\Vert_H)(\Gamma(t)+\sum_{n=1}^N\Vert v_n \Vert_{V})
	\end{align}
	for some $\Gamma\in L^q(0,T)$ and $\mathcal{B}_0:\mathbb{R}^2\to \mathbb{R}$, increasing in the second entry and, for fixed second entry, mapping bounded sets to bounded sets. Then the $F_n:(0,T)\times V^N\times X_\varphi\to W$ induce well-defined Nemytskii operators $F_n: \mathcal{V}^N\times X_\varphi\to \mathcal{W}$ with
	\begin{align}
		\label{F_nonlinear_Nemytskii}
	[F_n(v,\varphi)](t) = F_n(t, v(t), \varphi)
	\end{align}
	for $v\in \mathcal{V}^N$ and $\varphi\in X_\varphi$. Now suppose weak-weak continuity of
	\begin{align*}
		F_n(t,\cdot): H^N\times X_\varphi&\to W\\
		(v_1,\dots, v_N,\varphi)&\mapsto F_n(t,v_1,\dots, v_N,\varphi)
	\end{align*}
	for a.e. $t\in(0,T)$. Further assume that the $F_n$ satisfy the stricter growth condition
	\begin{align}
		\label{growth_condition_strict1}
		\Vert F_n(t,(v_n)_{1\leq n\leq N}, \varphi)\Vert_W \leq \mathcal{B}_0(\Vert\varphi\Vert_{X_\varphi}, \sum_{n=1}^N\Vert v_n\Vert_H)(\Gamma(t)+\sum_{n=1}^N\Vert v_n \Vert_{H})
	\end{align}
	for some $\Gamma\in L^q(0,T)$ and $\mathcal{B}_0:\mathbb{R}^2\to \mathbb{R}$ as above. Then \eqref{F_nonlinear_Nemytskii} is weak-weak continuous.
\end{proposition}
\begin{proof}
	See Appendix \ref{app:nonlinear}.
\end{proof}
\begin{remark}
	\label{rem:nonlin_phys_remark}
	A possible application case of the previous proposition is the following. Assume that there exists a reflexive, separable Banach space $V'$ and $\lambda \in \mathbb{N}_0$ with
	\begin{align}
		\label{F_reduction}
		H\hookrightarrow W^{\lambda,\hat{p}}(\Omega)\hookrightarrow V'
	\end{align}
	with the property that $F:(0,T)\times (V')^N\times X_\varphi\to W$ is well-defined. One might think of physical terms which regarding the state space variable do not need all higher order derivative information provided by the space $V$ (eventually given by $V=W^{\kappa+m,p_0}(\Omega)$ as outlined in Remark \ref{rem:choice_V}) but only $\lambda <\kappa+m$ many. Then the growth condition in \eqref{growth_condition1} with $\Vert \cdot \Vert_{V'}$ instead of $\Vert \cdot\Vert_V$ implies condition \eqref{growth_condition_strict1} due to \eqref{F_reduction}. Note that $H$ needs to be regular enough to be embeddable in $W^{\lambda,\hat{p}}(\Omega)$.
	
	The condition in \eqref{F_reduction} can be also understood the other way around. That is for given $H$ one might determine the maximal $\lambda\in \mathbb{N}$ such that $H\hookrightarrow W^{\lambda, \hat{p}}(\Omega)$. Then the previous considerations cover physical terms which are well-defined regarding state space variables with highest derivative order given by $\lambda$.

\end{remark}

To conclude this subsection we give the following example addressing the ideas in Remark \ref{rem:nonlin_phys_remark} more concretely. We restrict ourselves to a single equation which can be immediately generalized to general systems by introducing technical notation. Note that the space setup in the following example is consistent with Assumption \ref{ass_init_set}, but we do not discuss it in order to not distract from the central conditions on the parameters. For some preliminary ideas regarding the embedding $\mathcal{V}\hookrightarrow \mathcal{C}(0,T; H)$ see Remark \ref{rem:regul_cond} where one might have $\tilde{V}=H$.
\begin{example}
	We consider a simple three-dimensional transport problem where it is assumed that the known physics are governed by the inviscid Burgers' equation, i.e., we have $F(u)=-u\partial_x u-u\partial_y u-u\partial_z u$. Anticipating eventual viscosity effects we suppose that the unknown approximated term accounts for these effects. Let $V=W^{2,\hat{p}}(\Omega), \tilde{V}=H=W^{1,2}(\Omega)$, $W=L^{\hat{p}}(\Omega)$, $d=3$, $\kappa=1$ and $\hat{p}=\hat{q}=\frac{6-\epsilon}{4-\epsilon/2}$ for some small $0<\epsilon<1$. Then we have for $u\in V$ as $L^{3/2}(\Omega)\hookrightarrow W$ for some $c>0$ that
	\[
	\Vert F(u)\Vert_{W}\leq c\Vert u(\partial_x u+\partial_y u+\partial_z u)\Vert_{L^{3/2}(\Omega)}\leq c\Vert u\Vert_{L^6(\Omega)}\Vert \nabla u\Vert_{L^2(\Omega)}
	\]
	where the last inequality follows by the generalized Hölder's inequality. Due to the embedding $W^{1,2}(\Omega)\hookrightarrow L^6(\Omega)$ (recall that $d=3$) we derive that $\Vert F(u)\Vert_W\leq c\Vert u\Vert_H^2$ and hence, a growth condition of the form in \eqref{growth_condition_strict1}.\\
	To see weak-weak continuity of $F:H\to W$ let $(u_n)_n\subseteq H$ with $u_n\rightharpoonup u\in H$ as $n\to \infty$. Then for $w\in L^{\hat{p}^*}(\Omega)$ we have that $$\langle u_n(\partial_x u_n+\partial_y u_n+\partial_z u_n)-u(\partial_x u+\partial_y u+\partial_z u), w\rangle_{L^{\hat{p}}(\Omega),L^{\hat{p}^*}(\Omega)}$$ can be rewritten for $ e = \begin{pmatrix} 1 & 1& 1\end{pmatrix}^T\in\mathbb{R}^3$ by
	\begin{align}
		\label{example_nonlin_weak_conv}
		\langle u~e\cdot(\nabla u_n-\nabla u), w\rangle_{L^{\hat{p}}(\Omega),L^{\hat{p}^*}(\Omega)}+\langle (u_n-u)~ e\cdot\nabla u_n, w\rangle_{L^{\hat{p}}(\Omega),L^{\hat{p}^*}(\Omega)}.
	\end{align}
	For the first term in \eqref{example_nonlin_weak_conv} note that $\nabla u_n\rightharpoonup \nabla u$ in $L^2(\Omega)$ as $n\to \infty$. As
	\[
	\langle u~e\cdot (\nabla u_n-\nabla u), w\rangle_{L^{\hat{p}}(\Omega),L^{\hat{p}^*}(\Omega)} = \int_\Omega u(x)~e\cdot (\nabla u_n(x)-\nabla u(x)) w(x)\dx x
	\]
	it suffices to show that $uw \in L^2(\Omega)$ to obtain the convergence $u~e\cdot(\nabla u_n-\nabla u)\rightharpoonup 0$ in $L^{\hat{p}}(\Omega)$ as $n\to \infty$. This follows by $u\in W^{1,2}(\Omega)\hookrightarrow L^6(\Omega)$, Hölder's generalized inequality and $\hat{p}^*=\frac{6-\epsilon}{2-\epsilon/2}$ as
	\[
	\left(\frac{1}{6}+\frac{2-\epsilon/2}{6-\epsilon}\right)^{-1}=\frac{6-\epsilon}{3-2\epsilon/3}\geq 2.
	\]
	It remains to show that the second term in \eqref{example_nonlin_weak_conv} approaches zero as $n\to \infty$. By
	\[
	\langle (u_n-u)~e\cdot\nabla u_n, w\rangle_{L^{\hat{p}}(\Omega),L^{\hat{p}^*}(\Omega)} = \int_\Omega (u_n(x)-u(x))~e\cdot\nabla u_n(x) w(x)\dx x
	\]
	it suffices to show that $(\nabla u_n w)_n$ is uniformly bounded in $L^{\frac{6-\epsilon}{5-\epsilon}}(\Omega)$ as $u_n\to u$ in $L^{6-\epsilon}(\Omega)$ by the Rellich-Kondrachov Theorem. This follows by boundedness of $(\nabla u_n)_n$ in $L^2(\Omega)$ due to weak convergence and Hölder's generalized inequality concluding weak-weak continuity of $F$.
\end{example}
\begin{remark}
	The choice of $V=W^{2,\hat{p}}(\Omega)$ in the previous example might seem unnecessarily strong for modeling the inviscid Burgers' equation. The reason for this choice is that we suppose that the additional hidden physics that need to be reconstructed possibly include viscosity effects which are of order $\kappa =1$, i.e., the (un)known physical effects are of equal differential order. As a consequence, the state space $V$ needs to attain higher regularity to cover Assumption \ref{ass_init_set} (see also Remark \ref{rem:choice_V}). If one would anticipate only additional unknown reaction terms ($\kappa=0$) choosing $V$ of lower regularity would be possible. In any case, however, it is important to note that the stronger regularity requirement comes from the fact that we simultaneously recover both the state and a non-linear term acting on the state. During the learning process, this regularity can be enforced by using appropriate regularization. Afterwards, in applications of the learned model, the additional regularity is no longer required.
\end{remark}
\section{The uniqueness problem}
\label{sec:uniqueness}
The starting point of our considerations on uniqueness is Assumption \ref{ass_uniqueness}, iii), where we assume for given full measurement data $(\hat y^l)_l \in \mathcal{Y}^L$ and $F:\mathcal{V}^N\times X_\varphi\to \mathcal{W}^N$, to be understood as in Section \ref{sec:problem_setting}, the existence of $\hat{f}:(\otimes_{k=0}^\kappa \mathcal{V}_k^\times)^N\to \mathcal{W}^N$, a state $(\hat{u}_n^l)_{\substack{1\leq n\leq N\\ 1\leq l\leq L}}\in \mathcal{V}^{N\times L}$, an initial condition $(\hat{u}_{0,n}^l)_{\substack{1\leq n\leq N\\ 1\leq l\leq L}}\in H^{N\times L}$, a boundary condition $(\hat{g}_n^l)_{\substack{1\leq n\leq N\\ 1\leq l\leq L}}\in \mathcal{B}^{N\times L}$ and a source term $(\hat{\varphi}_n^l)_{\substack{1\leq n\leq N\\ 1\leq l\leq L}}\in X_\varphi^{N\times L}$ solving the system of partial differential equations \eqref{rdsystem}, i.e.,
\begin{equation}
	\label{pdesystem}
	\tag{$S$}
	\begin{aligned}
		&\partial_t\hat{u}^l = F(t,\hat{u}^l,\hat{\varphi}^l)+\hat{f}(t,\mathcal{J}_\kappa\hat{u}^l)\\
		&\text{s.t.} ~ ~ \hat{u}^l(0)=\hat{u}_0^l, ~ \gamma(\hat{u}^l)=\hat{g}^l, ~ ~ 
	\end{aligned}
\end{equation}
together with the measurements
\begin{equation}
	\label{measurements}
	\tag{$M$}
	K^\dagger \hat{u}^l = \hat y^l
\end{equation}
for $l=1,\ldots,L$. The results of this section are developed based on Assumption \ref{ass_init_set} to \ref{ass_uniqueness}. Note that under these assumptions, due to \eqref{measurements} and injectivity of the full measurement operator $K^\dagger$ by Assumption \ref{ass_uniqueness}, v), the state $\hat{u}$ is uniquely given in system \eqref{pdesystem} even if the term $\hat{f}$ is not.

We recall that the bounded Lipschitz domain $U$ is chosen and fixed according to Assumption \ref{ass_uniqueness}, vii). Note that by Assumption \ref{ass_uniqueness}, ii), it holds that $\mathcal{F}_n^m\subseteq W^{1,\infty}(U)$ for $1\leq n\leq N$, $m\in\mathbb{N}$.

Before we move on to the limit problem and question of uniqueness let us justify the choice of regularization for $f_\theta\in W^{1,\infty}(U)^N$. The problem of using the $W^{1,\infty}(U)$-norm directly is that its powers are not strictly convex which is necessary for uniqueness issues later. This is overcome by the well known equivalence of the norms $\Vert \cdot\Vert_{W^{1,\infty}(U)}$ and $\Vert\cdot\Vert_{L^\rho(U)}+\vert\cdot\vert_{W^{1,\infty}(U)}$ on $W^{1,\infty}(U)$ for bounded domains $U$, which follows by \cite[6.12 A lemma of J.-L. Lions]{Brezis2010} and \cite[Theorem 9.16 (Rellich–Kondrachov)]{Brezis2010}. That is, the space $W^{1,\infty}(U)$ may be strictly convexified under the equivalent norm $\Vert\cdot\Vert_{L^\rho(U)}+\vert\cdot\vert_{W^{1,\infty}(U)}$ for $1<\rho<\infty$ with $\vert\cdot\vert_{W^{1,\infty}(U)}$ the seminorm in $W^{1,\infty}(U)$.

The following proposition introduces the limit problem and shows uniqueness:
\begin{proposition} 
	\label{prop:p_dagger}
	Let Assumptions \ref{ass_init_set} to \ref{ass_uniqueness} without Assumption \ref{ass_uniqueness}, iv) be satisfied. Then there exists a unique solution $(\varphi^\dagger, u^\dagger, u_0^\dagger, g^\dagger, f^\dagger)\in X_\varphi^{N\times L}\times \mathcal{V}^{N\times L}\times H^{N\times L}\times\mathcal{B}^{N\times L}\times W^{1,\infty}(U)^N$ to
	\begin{equation} \tag{$\mathcal{P}^\dagger$} \label{eq:limit_problem}
		\begin{aligned}
			\min_{\substack{\varphi\in X_\varphi^{N\times L},u\in\mathcal{V}^{N\times L},\\ u_0\in H^{N\times L}, g\in \mathcal{B}^{N\times L},\\ f\in W^{1,\infty}(U)^N}}
			& \mathcal{R}_0(\varphi, u,u_0,g)+\Vert f\Vert_{L^\rho(U)}^\rho+\Vert\nabla f\Vert_{L^{\infty}(U)}
			\\
			\text{s.t. } & \frac{\partial}{\partial t}u_n^l-F_n(t, u^l_1,\dots, u^l_N,\varphi^l_n)-f_{n}(t,\mathcal{J}_\kappa u^l_1,\dots, \mathcal{J}_\kappa u^l_N) = 0, \\
			& K^\dagger u^l = y^l, \,  u_n^{l}(0) = u_{0,n}^l , \,  \gamma(u^l) = g^l.
			& 
		\end{aligned}		
	\end{equation}
\end{proposition}
\begin{proof}
	First of all, the constraint set of problem \eqref{eq:limit_problem} is not empty by Assumption \ref{ass_uniqueness}, iii), i.e., there exist admissible functions solving system \eqref{pdesystem} such that \eqref{measurements} holds true. Due to injectivity of the full measurement operator $K^\dagger$, for any element satisfying the constraint set of \eqref{eq:limit_problem} the state is uniquely given by $u^\dagger =\hat{u}$. As a consequence, also the initial and boundary trace are uniquely determined by $u_0^\dagger =u^\dagger(0)=\hat{u}(0)=\hat{u}_0$ and $(g^{\dagger, l}_n)_{n,l}=(\gamma(u^{\dagger, l}_n))_{n,l}=(\gamma(\hat{u}^l_n))_{n,l}=(\hat{g}^l_n)_{n,l}$, respectively. By Assumption \ref{ass_uniqueness}, i), iii) and vi) it follows that
	\begin{align}
		\label{uhat_in_U}
		\Vert \mathcal{J}_\kappa u^\dagger\Vert_{L^\infty((0,T)\times \Omega)}= \Vert \mathcal{J}_\kappa \hat{u}\Vert_{L^\infty((0,T)\times \Omega)}\leq c_\mathcal{V}\Vert \hat{u}\Vert_{\mathcal{V}}\leq c_\mathcal{V}\pi(\mathcal{R}_0(\hat{\varphi}, \hat{u},\hat{u}_0,\hat{g}))
	\end{align}
	and hence, that $(t,\mathcal{J}_\kappa u^{\dagger,l}(t,x))\in U$ for $(t,x)\in(0,T)\times\Omega$ by Assumption \ref{ass_uniqueness}, vii). 
	
	Thus, problem \eqref{eq:limit_problem} may be rewritten equivalently by
	\begin{equation}
		\label{eq:reduced_limitproblem}
		\begin{aligned}
			\min_{\substack{\varphi\in X_\varphi^{N\times L},\\f\in W^{1,\infty}(U)^N}}
			& \mathcal{R}_0(\varphi, u^\dagger,u^\dagger_0,g^\dagger)+\Vert f\Vert_{L^\rho(U)}^\rho+\Vert\nabla f\Vert_{L^{\infty}(U)}
			\\
			\text{s.t. } & \frac{\partial}{\partial t}u^{\dagger, l}-F(t, u^{\dagger, l},\varphi^l)-f(t,\mathcal{J}_\kappa u^{\dagger,l}) = 0.
		\end{aligned}	
	\end{equation}
	The existence of a solution to \eqref{eq:reduced_limitproblem} follows by the direct method: In the following, w.l.o.g., we omit a relabelling of sequences to convergent subsequences. Using the norm equivalence of $\Vert\cdot\Vert_{W^{1,\infty}(U)}$, $\Vert\cdot\Vert_{L^\rho(U)}+\vert\cdot\vert_{W^{1,\infty}(U)}$ and coercivity of $\mathcal{R}_0$ a minimizing sequence $(\varphi^k, f^k)_k\subseteq X_\varphi^{N\times L}\times W^{1,\infty}(U)^N$ to \eqref{eq:reduced_limitproblem} is bounded. Thus, there exist $\varphi'\in X_\varphi^{N\times L}$ and $f'\in W^{1,\infty}(U)^N$ such that $\varphi^k\rightharpoonup \varphi'$ in $X_\varphi^{N\times L}$ and $f^k\overset{*}{\rightharpoonup} f'$ in $W^{1,\infty}(U)^N$ as $k\to \infty$ by reflexivity of $X_\varphi^{N\times L}$ and $W^{1,\infty}(U)^N$ being the dual of a separable space. By $f^k\overset{*}{\rightharpoonup} f'$ in $L^{\infty}(U)^N$ and $\nabla f^k\overset{*}{\rightharpoonup} \nabla f'$ in $L^{\infty}(U)^N$ as $k\to \infty$ together with $L^\infty(U)\hookrightarrow L^\rho(U)$, $1<\rho<\infty$ and weak lower semicontinuity of $\mathcal{R}_0$ it follows that $(\varphi',f')\in X_\varphi^{N\times L}\times W^{1,\infty}(U)^N$ minimizes the objective functional of \eqref{eq:reduced_limitproblem}. We argue that also 
	\begin{align}
		\label{eq:pde_in_limit}
		\partial_tu^{\dagger,l} = F(t,u^{\dagger,l},\varphi'^l)+f'(t,\mathcal{J}_\kappa u^{\dagger,l})
	\end{align}
	concluding that $(\varphi',f')$ is indeed a solution of \eqref{eq:reduced_limitproblem}. For that note that $f^k \to f'$ in $\mathcal{C}(\overline{U})^N$ as $k\to \infty$ by the Rellich-Kondrachov Theorem. Thus, by $L^q(0,T;L^{\hat{p}}(\Omega))\hookrightarrow \mathcal{W}$ and boundedness of $U$ together with \eqref{uhat_in_U} and $u^\dagger = \hat{u}$ we have for some $c>0$
	\[
	\Vert f^k(\mathcal{J}_\kappa u^{\dagger,l})-f'(\mathcal{J}_\kappa u^{\dagger,l})\Vert_{\mathcal{W}}\leq c\Vert f^k-f'\Vert_{L^\infty(U)},
	\]
	and conclude that  $f^k(\mathcal{J}_\kappa u^{\dagger,l})\to f'(\mathcal{J}_\kappa u^{\dagger,l})$ in $\mathcal{W}^N$ as $k\to \infty$. Using this, as a consequence of boundedness of $\Vert \partial_t u^{\dagger,l}-f^k(\mathcal{J}_\kappa u^{\dagger,l})\Vert_{\mathcal{W}}$ for $k\in \mathbb{N}$ it follows by Assumption \ref{ass_phys_term}, ii) that $F(u^{\dagger,l}, \varphi^{k,l})\rightharpoonup F(u^{\dagger,l}, \varphi'^l)$ in $\mathcal{W}^N$ as $k\to \infty$ and we recover \eqref{eq:pde_in_limit}.
	
	Finally, uniqueness of $(\varphi^\dagger,f^\dagger)=(\varphi',f')$ as solution to \eqref{eq:reduced_limitproblem} follows from strict convexity of the objective functional in $(\varphi,f)\in X_\varphi^{N\times L}\times W^{1,\infty}(U)^N$ and from $F$ being affine with respect to $\varphi$.
\end{proof}
Now recall that, under Assumption \ref{ass_uniqueness}, the minimization problem \eqref{min_prob} reduces to the following specific case:
\begin{equation}
	\label{min_prob_uniqueness}
	\tag{$\mathcal{P}^m$}
	\begin{aligned}
		&\min_{\substack{\varphi\in X_\varphi^{N\times L},\theta\in\otimes_n\Theta^m_n,\\ u\in\mathcal{V}^{N\times L}, u_0\in H^{N\times L},\\ g\in \mathcal{B}^{N\times L}}}\sum_{1\leq l\leq L}\bigg[\lambda^m \bigg(\Vert \frac{\partial}{\partial t}u^l-F(t, u^l,\varphi^l)-f_{\theta}(t,\mathcal{J}_\kappa u^l)\Vert_{\mathcal{W}}^q\\
		&+\Vert u^{l}(0)-u_{0}^l\Vert_{H} ^2+\mathcal{D}_{\BC}(\gamma(u^l)-g^l)\bigg) + \mu^m\Vert K^mu^l-y^{m,l}\Vert_\mathcal{Y}^r\bigg] \\ 
		& +
		\mathcal{R}_0(\varphi, u,u_0,g)+ \nu^m \|\theta\| + \Vert f_\theta\Vert_{L^\rho(U)}^\rho+\Vert\nabla f_\theta\Vert_{L^{\infty}(U)}	
	\end{aligned}
\end{equation}
for a sequence of measured data $\mathcal{Y}\ni y^{m,l}\approx K^m u^{\dagger,l}$ for  $m\in\mathbb{N}$ and $1\leq l\leq L$ with $u^\dagger$ as in Proposition \ref{prop:p_dagger}. More concretely the measured data $(y^{m,l})_m\subseteq \mathcal{Y}$ is supposed to be given under some noise estimation
\begin{align}
	\label{estim:noise_estimator}
	\Vert y^{m,l}-K^mu^{\dagger,l}\Vert_{\mathcal{Y}}\leq \delta(m)
\end{align}
for $1\leq l\leq L$ with $\delta:\mathbb{N}\to \mathbb{R}_{\geq 0}$ the noise estimator such that $\lim_{m\to \infty}\delta(m)=0$.

Our main result on approximating the unique solution of \eqref{eq:limit_problem} is now the following:
\begin{theorem}
	\label{thm:uniqueness}
	Let Assumptions \ref{ass_init_set} to \ref{ass_uniqueness} hold true with the approximation capacity condition in Assumption \ref{ass_uniqueness}, iv) being satisfied for $f^\dagger$ where $(\varphi^\dagger, u^\dagger, u_0^\dagger, g^\dagger, f^\dagger)$ is the unique solution to \eqref{eq:limit_problem}.
	\begin{itemize}[leftmargin=*]
		\item Let $(\varphi^m,\theta^m,u^m,u_0^m,g^m)$ be a solution to \eqref{min_prob_uniqueness} for each $m \in \mathbb{N}$.
		\item Let further the parameters $\lambda^m,\mu^m,\nu^m>0$ be chosen such that 
		$\lambda^m\to \infty$, $\mu^m\to \infty$ and $\nu^m\to 0$ with $\lambda^m m^{-\beta q} = o(1)$, $\mu^m\delta(m)^r=o(1)$ and $\nu^m\psi(m)=o(1)$ as $m\to \infty$.
	\end{itemize}
	Then $\varphi^m \rightharpoonup \varphi^\dagger$ in $X_\varphi^{N\times L}$, $u^m \rightharpoonup u^\dagger $ in $\mathcal{V}^{N\times L}$, $u_0^m \rightharpoonup u_0^\dagger$ in $H^{N\times L}$, $g^m\rightharpoonup g^\dagger$ in $\mathcal{B}^{N\times L}$ and $f_{\theta^m} \overset{*}{\rightharpoonup} f^\dagger $ in $W^{1,\infty}(U)^N$.
	
\end{theorem}
\begin{proof}
	First of all, the existence of solutions $(\varphi^m,\theta^m,u^m,u_0^m,g^m)$ to \eqref{min_prob_uniqueness} for each $m \in \mathbb{N}$ follows by the direct method which is discussed in all details in Appendix \ref{app:existence} on the existence of minimizers. Let now $c>0$ be a generic constant used throughout the following estimations. By Assumption \ref{ass_uniqueness}, iv) being satisfied for $f^\dagger$, there exist $\tilde{\theta}^m\in \otimes_{n=1}^N \Theta_n^m$  such that $\Vert f^\dagger-f_{\tilde{\theta}^m}\Vert_{L^\infty(U)} \leq c m^{-\beta}$ and $\Vert \tilde{\theta}^m\Vert \leq \psi(m)$ for $m\in \mathbb{N}$ together with $\Vert\nabla f_{\tilde{\theta}^m}\Vert_{L^\infty(U)}\to \Vert \nabla f^\dagger\Vert_{L^\infty(U)}$ as $m\to \infty$. As $(\varphi^m, \theta^m,u^m, u_0^m, g^m)$ is a solution to Problem \eqref{min_prob_uniqueness} we may estimate its objective functional value using the noise estimate \eqref{estim:noise_estimator} by
	\begin{align}
		\label{first_comparison}
		\notag&\sum_{1\leq l\leq L}\bigg[\lambda^m \bigg(\Vert \frac{\partial}{\partial t}u^{m,l}-F(t, u^{m,l},\varphi^{m,l})-f_{\theta^m}(t,\mathcal{J}_\kappa u^{m,l})\Vert_{\mathcal{W}}^q\\
		\notag&~ ~ +\Vert u^{m,l}(0)-u_{0}^{m,l}\Vert_{H} ^2+\mathcal{D}_{\BC}(\gamma(u^{m,l})-g^{m,l})\bigg) + \mu^m\Vert K^mu^{m,l}-y^{m,l}\Vert_\mathcal{Y}^r\bigg]\\
		\notag& ~ ~+\mathcal{R}_0(\varphi^m, u^m,u_0^m,g^m) +\nu^m \|\theta^m\| 	+ \Vert f_{\theta^m}\Vert_{L^\rho(U)}^\rho+\Vert\nabla f_{\theta^m}\Vert_{L^{\infty}(U)}\\
		\notag\leq&\sum_{1\leq l\leq L}\left[\lambda^m\Vert \frac{\partial}{\partial t}u^{\dagger,l}-F(t, u^{\dagger,l},\varphi^{\dagger,l})-f_{\tilde{\theta}^m}(t,\mathcal{J}_\kappa u^{\dagger,l})\Vert_{\mathcal{W}}^q+\mu^m\delta(m)^r\right]\\
		& ~ ~ +\mathcal{R}_0(\varphi^\dagger, u^\dagger,u_0^\dagger,g^\dagger)+\nu^m \|\tilde{\theta}^m\| 	+ \Vert f_{\tilde{\theta}^m}\Vert_{L^\rho(U)}^\rho+\Vert\nabla f_{\tilde{\theta}^m}\Vert_{L^{\infty}(U)}.
	\end{align}
	We may further estimate one part of the sum on the right hand side of \eqref{first_comparison} by
	\begin{multline*}
		\sum_{1\leq l\leq L}\Vert \frac{\partial}{\partial t}u^{\dagger,l}-F(t,u^{\dagger,l},\varphi^{\dagger,l})-f_{\tilde{\theta}^m}(t,\mathcal{J}_\kappa u^{\dagger,l})\Vert_{\mathcal{W}}^q\\
		= \sum_{1\leq l\leq L}\Vert f^\dagger(t,\mathcal{J}_\kappa u^{\dagger,l})-f_{\tilde{\theta}^m}(t,\mathcal{J}_\kappa u^{\dagger, l})\Vert_{\mathcal{W}}^q\leq c\Vert f^\dagger-f_{\tilde{\theta}^m}\Vert_{L^\infty(U)}^q\leq cm^{-\beta q}
	\end{multline*}
	where in the penultimate estimation we have used $(t,\mathcal{J}_\kappa u^{\dagger, l})\in U$ which follows by Proposition \ref{prop:p_dagger} together with \eqref{uhat_in_U}, and in the last step Assumption \ref{ass_uniqueness}, iv). By 
	\[
	\lim_{m\to \infty}\Vert f_{\tilde{\theta}^m}\Vert_{L^\rho(U)}^\rho+\Vert\nabla f_{\tilde{\theta}^m}\Vert_{L^{\infty}(U)} = \Vert f^\dagger\Vert_{L^\rho(U)}^\rho+\Vert\nabla f^\dagger\Vert_{L^{\infty}(U)},
	\]
	due to Assumption \ref{ass_uniqueness}, iv), and the choice of the $\lambda^m, \mu^m, \nu^m$ we derive that the right hand side of \eqref{first_comparison} converges to 
	\begin{align}
		\label{limit_rhs}
	\mathcal{R}_0(\varphi^\dagger, u^\dagger,u_0^\dagger,g^\dagger)+\Vert f^\dagger\Vert_{L^\rho(U)}^\rho+\Vert\nabla f^\dagger\Vert_{L^{\infty}(U)}
	\end{align}
	as $m\to \infty$ which is exactly the objective functional of problem \eqref{eq:limit_problem}. Using that $(\varphi^\dagger, u^\dagger, u_0^\dagger, g^\dagger, f^\dagger)$ is the minimizer to \eqref{eq:limit_problem} we can estimate \eqref{limit_rhs} from above by
	\[
	\mathcal{R}_0(\hat{\varphi}, \hat{u},\hat{u}_0,\hat{g})+\Vert \hat{f}\Vert_{L^\rho(U)}^\rho+\Vert\nabla \hat{f}\Vert_{L^{\infty}(U)}
	\]
	for admissible $\hat{f}\in W^{1,\infty}(\mathbb{R}^D)^N$, $\hat{u}\in \mathcal{V}^{N\times L}$, $\hat{\varphi}\in X_\varphi^{N\times L}$, $\hat{u}_0\in H^{N\times L}$, $\hat{g}\in \mathcal{B}^{N\times L}$ according to Assumption \ref{ass_uniqueness}, iii). As a consequence, for $m$ sufficiently large it follows by Assumption \ref{ass_uniqueness}, i) and vi), that
	\begin{align}
		\label{est:Jum}
		\notag\Vert \mathcal{J}_\kappa u^m\Vert_{L^\infty((0,T)\times\Omega)}&\leq c_\mathcal{V}\Vert u^m\Vert_{\mathcal{V}} \leq c_\mathcal{V} \pi(\mathcal{R}_0(\varphi^m, u^m,u_0^m,g^m))\\
		&\leq c_\mathcal{V} \pi(\mathcal{R}_0(\hat{\varphi}, \hat{u},\hat{u}_0,\hat{g})+\Vert \hat{f}\Vert_{L^\rho(U)}^\rho+\Vert\nabla \hat{f}\Vert_{L^{\infty}(U)}+1).
	\end{align}
	Hence, we derive that $(t,\mathcal{J}_\kappa u^{m,l})\in U$ for $m$ sufficiently large by monotonicity of $\pi$ and $\Vert \hat{f}\Vert_{L^\rho(U)}^\rho+\Vert\nabla \hat{f}\Vert_{L^{\infty}(U)}\leq \Vert \hat{f}\Vert_{L^\rho(\mathbb{R}^D)}^\rho+\Vert\nabla \hat{f}\Vert_{L^{\infty}(\mathbb{R}^D)}$. By convergence of the right hand side of \eqref{first_comparison} the terms $\Vert\varphi^m\Vert_{X_\varphi}$ and $\Vert u^m\Vert_{\mathcal{V}}$ are bounded due to coercivity of $\mathcal{R}_0$. Similarly boundedness of $\Vert f_{\theta^m}\Vert_{W^{1,\infty}(U)}$ follows using the norm equivalence of $\Vert\cdot\Vert_{W^{1,\infty}(U)}$ and $\Vert\cdot\Vert_{L^\rho(U)}+\vert\cdot\vert_{W^{1,\infty}(U)}$. Boundedness of $\Vert u_0^m\Vert_{H}$ follows as $\lambda^m\to \infty$ as $m\to \infty$ together with boundedness of $\Vert u^m(0)\Vert_{H}$, which holds by boundedness of $\Vert u^m\Vert_{\mathcal{V}}$, and continuity of the initial condition map shown in Appendix \ref{app:existence}, II. Finally by $\lambda^m\to \infty$ as $m\to \infty$, coercivity of $\mathcal{D}_\BC$, boundedness of $\gamma$ and boundedness of the $u^m$, also boundedness of $\Vert g^m\Vert_{\mathcal{B}}$ can be inferred. As a consequence of reflexivity of $X_\varphi^{N\times L}$, $\mathcal{V}^{N\times L}, H^{N\times L}, \mathcal{B}^{N\times L}$ and the fact that  $W^{1,\infty}(U)^N$ is the dualspace of a separable space, we derive that there exist weakly convergent subsequences (w.l.o.g. the whole sequences as we will see subsequently that the limit is unique) and $\tilde{\varphi}\in X_\varphi^{N\times L}, \tilde{u}\in \mathcal{V}^{N\times L}, \tilde{u}_0\in H^{N\times L}, \tilde{g}\in\mathcal{B}^{N\times L}$ and similarly a weak-$*$ convergent subsequence and $\tilde{f}\in W^{1,\infty}(U)^N$ with $\varphi^m\rightharpoonup \tilde{\varphi}$, $u^m\rightharpoonup \tilde{u}$, $u_0^m\rightharpoonup \tilde{u}_0$, $g^m\rightharpoonup \tilde{g}$, $f_{\theta^m}\overset{*}{\rightharpoonup} \tilde{f}$ as $m\to \infty$ (by \cite[Theorem 3.18]{Brezis2010} and Banach-Alaoglu-Bourbaki e.g. in \cite[Theorem 3.16]{Brezis2010}). By weak lower semicontinuity and weak-$*$ lower semicontinuity together with the previous considerations we derive
	\begin{multline}
		\label{Restim}
		\mathcal{R}_0(\tilde{\varphi},\tilde{u},\tilde{u}_0,\tilde{g})+\Vert\tilde{f}\Vert_{L^\rho(U)}^\rho+\Vert\nabla\tilde{f}\Vert_{L^\infty(U)}\\
		\leq \liminf_{m\to \infty}\mathcal{R}_0(\varphi^m,u^m,u_0^m,g^m)+\Vert f^m\Vert_{L^\rho(U)}^\rho+\Vert\nabla f^m\Vert_{L^\infty(U)}\\
		\leq \mathcal{R}_0(\varphi^\dagger, u^\dagger,u_0^\dagger,g^\dagger)+\Vert f^\dagger\Vert_{L^\rho(U)}^\rho+\Vert\nabla f^\dagger\Vert_{L^{\infty}(U)}
	\end{multline}
	We argue that $\tilde{u}=u^\dagger$: As the right hand side of \eqref{first_comparison} converges it holds true that 
	\begin{align}
		\label{kkconvergence1}
		K^mu^{m,l}-y^{m,l}\to 0 ~ ~ ~ \text{strongly in } ~ \mathcal{Y} ~ \text{ as } ~ m\to \infty
	\end{align}
	due to $\mu^m\to \infty$ as $m\to \infty$. The following estimation shows that $K^mu^{m,l}$ converges to $K^\dagger \tilde{u}^l$ as $m\to \infty$. %
	Due to the triangle inequality we have that
	\begin{align*}
		\Vert K^mu^{m,l} - K^\dagger \tilde{u}^l\Vert_{\mathcal{Y}}&\leq \Vert K^m u^{m,l}-K^\dagger u^{m,l}\Vert_{\mathcal{Y}}+\Vert K^\dagger u^{m,l} -K^\dagger \tilde{u}^l\Vert_{\mathcal{Y}}.
	\end{align*}
	Employing the convergence condition in \eqref{operator_conv_weak}, ensuring that the first term on the right hand side converges to zero, and weak-strong continuity of $K^\dagger$, implying $K^\dagger u^{m,l}\to K^\dagger \tilde{u}^l$ in $\mathcal{Y}$ as $m\to \infty$, we recover that indeed
	\begin{align}
		\label{kkconvergence2}
		K^mu^{m,l}\to K^\dagger \tilde{u}^l ~ ~ ~ \text{strongly in } ~ \mathcal{Y} ~ \text{ as } ~ m\to \infty.
	\end{align}
	Thus, by \eqref{estim:noise_estimator}, the convergences \eqref{kkconvergence1}, and \eqref{kkconvergence2}, together with Assumption \ref{ass_uniqueness}, v), and
	\begin{multline*}
	\Vert K^\dagger \tilde{u}^l-K^\dagger u^{\dagger,l}\Vert_{\mathcal{Y}}\leq \Vert K^\dagger \tilde{u}^l-K^mu^{m,l}\Vert_{\mathcal{Y}}+\Vert K^mu^{m,l}-y^{m,l}\Vert_{\mathcal{Y}}\\
	+\Vert y^{m,l}-K^mu^{\dagger,l}\Vert_{\mathcal{Y}}+\Vert K^mu^{\dagger,l}-K^\dagger u^{\dagger,l}\Vert_{\mathcal{Y}}
	\end{multline*}
	we derive $K^\dagger \tilde{u}^l=K^\dagger u^{\dagger,l}$. As a consequence of injectivity of $K^\dagger$ we finally derive that $\tilde{u}=u^\dagger$. We argue next that $\tilde{u}_0=u_0^\dagger$. For that, note once more that by convergence of the right hand side of \eqref{first_comparison} and $\lambda^m\to \infty$ as $m\to \infty$ we obtain that $u^{m}(0)-u_0^{m}\to 0$ in $H^{N\times L}$ as $m\to \infty$. As $u_0^m\rightharpoonup \tilde{u}_0$ in $H^{N\times L}$ as $m\to \infty$ we recover that $u^m(0)\rightharpoonup \tilde{u}_0$ in $H^{N\times L}$ as $m\to \infty$. Together with $u^m\rightharpoonup u^\dagger$, by what we have just shown, and weak closedness of the initial condition evaluation verified in II. of Appendix \ref{app:existence}, we obtain that indeed $\tilde{u}_0=u^\dagger(0)=u_0^\dagger$. By similar arguments and the assumption that $\mathcal{D}_\BC(z)=0$ for $z\in \mathcal{B}^N$ iff $z=0$ we obtain that $\gamma(u^{m,l})-g^{m,l}\to 0$ in $\mathcal{B}^N$ as $m\to \infty$. As $g^{m}\rightharpoonup \tilde{g}$ and $\gamma(u^{m,l})\rightharpoonup \gamma(u^{\dagger,l})=g^{\dagger,l}$ by continuity of $\gamma$, both in $\mathcal{B}^N$ as $m\to \infty$, it also holds $\tilde{g}=g^\dagger$. It remains to show $\tilde{\varphi}=\varphi^\dagger$ and $\tilde{f}=f^\dagger$. Using the already discussed identities for $\tilde{u}, \tilde{u}_0$ and $\tilde{g}$, estimation \eqref{Restim} yields
	\begin{multline*}
		\mathcal{R}_0(\tilde{\varphi},u^\dagger,u_0^\dagger,g^\dagger)+\Vert\tilde{f}\Vert_{L^\rho(U)}^\rho+\Vert\nabla\tilde{f}\Vert_{L^\infty(U)}\\
		\leq \mathcal{R}_0(\varphi^\dagger, u^\dagger,u_0^\dagger,g^\dagger)+\Vert f^\dagger\Vert_{L^\rho(U)}^\rho+\Vert\nabla f^\dagger\Vert_{L^{\infty}(U)}.
	\end{multline*}
	Moreover, as the right hand side of \eqref{first_comparison} converges as $m\to\infty$, it holds true that
	\begin{align}
		\label{limitconstraint}
		\lim_{m\to \infty}\sum_{1\leq l\leq L}\Vert \frac{\partial}{\partial t}u^{m,l}-F(t, u^{m,l},\varphi^{m,l})-f_{\theta^m}(t,\mathcal{J}_\kappa u^{m,l})\Vert_{\mathcal{W}}^q = 0
	\end{align}
	due to $\lambda^m\to \infty$ as $m\to \infty$. We argue that 
	\[
	\frac{\partial}{\partial t}u^{m,l}-F(t, u^{m,l},\varphi^{m,l})-f_{\theta^m}(t,\mathcal{J}_\kappa u^{m,l})\rightharpoonup \frac{\partial}{\partial t}u^{\dagger,l}-F(t, u^{\dagger,l},\tilde{\varphi}^{l})-\tilde{f}(t,\mathcal{J}_\kappa u^{\dagger,l})
	\]
	as $m\to \infty$ in $\mathcal{W}^N$, which together with \eqref{limitconstraint} and weak lower semicontinuity of the $\Vert\cdot\Vert_{\mathcal{W}}$-norm implies that
	\begin{align}
		\label{eq:hard_constraint}
		\frac{\partial}{\partial t}u^{\dagger,l}=F(t, u^{\dagger,l},\tilde{\varphi}^{l})+\tilde{f}(t,\mathcal{J}_\kappa u^{\dagger,l}).
	\end{align}
	By Assumption \ref{ass_uniqueness}, viii), and the considerations in Appendix \ref{app:existence}, I. showing weak continuity of the temporal derivative, it follows that
	\begin{align}
		\label{weak_conv_firstpart}
		\frac{\partial}{\partial t}u^{m,l}-F(t,u^{m,l},\varphi^{m,l})\rightharpoonup \frac{\partial}{\partial t} u^{\dagger,l}-F(t,u^{\dagger,l},\tilde{\varphi}^l)
	\end{align}
	as $m\to \infty$ in $\mathcal{W}^N$. It remains to argue that $f_{\theta^m}(t, \mathcal{J}_\kappa u^{m,l})\rightharpoonup \tilde{f}(t,\mathcal{J}_\kappa u^{\dagger,l})$ in $\mathcal{W}^N$ as $m\to \infty$. Using \eqref{limitconstraint} and \eqref{weak_conv_firstpart} we obtain that the $\Vert f_{\theta^m}(t,\mathcal{J}_\kappa u^{m,l})\Vert_{\mathcal{W}}$ are bounded for $m\in \mathbb{N}$ and thus, the $(f_{\theta^m}(t, \mathcal{J}_\kappa u^{m,l}))_m$ attain a weakly convergent subsequence in $\mathcal{W}^N$. We show that indeed $f_{\theta^m}(t,\mathcal{J}_\kappa u^{m,l})\to \tilde{f}(t,\mathcal{J}_\kappa u^{\dagger,l})$ in $\mathcal{W}^N$ as $m\to \infty$. As $U$ is bounded, open and has a Lipschitz-regular boundary we have that $W^{1,\infty}(U)^N\hookdoubleheadrightarrow \mathcal{C}(\overline{U})^N$ by Rellich-Kondrachov and consequently, the convergence $f_{\theta^m}\to \tilde{f}$ holds uniformly on $U$ as $m\to \infty$. Thus, in particular $f_{\theta^m}(t, \mathcal{J}_\kappa u^{m,l})- \tilde{f}(t, \mathcal{J}_\kappa u^{m,l})\to0$ in $\mathcal{W}^N$ as $m\to \infty$ as for some $c=c(T,\Omega)>0$,
	\[
	\Vert f_{\theta^m}(t,\mathcal{J}_\kappa u^{m,l})-\tilde{f}(t,\mathcal{J}_\kappa u^{m,l})\Vert_{\mathcal{W}}\leq c\Vert f_{\theta^m}-\tilde{f}\Vert_{L^\infty(U)}
	\]
	for $m$ sufficiently large such that $(t,\mathcal{J}_\kappa u^{m,l})\in U$. The convergence $\tilde{f}(t,\mathcal{J}_\kappa u^{m,l})\to \tilde{f}(t,\mathcal{J}_\kappa u^{\dagger,l})$ in $\mathcal{W}^N$ as $m\to \infty$ can be seen as follows. For that, we require the compact embedding $\mathcal{V}\hookdoubleheadrightarrow L^p(0,T;W^{\kappa,\hat{p}}(\Omega))$ which in fact by $\mathcal{V}=L^p(0,T;V)\cap W^{1,p,p}(0,T;\tilde{V})\hookdoubleheadrightarrow L^p(0,T;W^{\kappa,\hat{p}}(\Omega))$ follows in case $W^{\kappa,\hat{p}}(\Omega)\hookrightarrow \tilde{V}$ by the Aubin-Lions Lemma \cite[Lemma 7.7]{Roubíček2013}. For its application recall that $V, W^{\kappa, \hat{p}}(\Omega)$ are Banach spaces, $V$ reflexive and separable, $\tilde{V}$ a metrizable Hausdorff space, $V\hookdoubleheadrightarrow W^{\kappa,\hat{p}}(\Omega),$ $ W^{\kappa,\hat{p}}(\Omega)\hookrightarrow \tilde{V}$ and $1<p<\infty$. Otherwise in case $\tilde{V}\hookrightarrow W^{\kappa,\hat{p}}(\Omega)$ then $\mathcal{V}\subseteq L^p(0,T;V)\cap W^{1,p,p}(0,T;W^{\kappa,\hat{p}}(\Omega))$ and we can apply again Aubin-Lions' Lemma to obtain $\mathcal{V}\hookdoubleheadrightarrow L^p(0,T;W^{\kappa,\hat{p}}(\Omega))$. 
	
	As a consequence, since $u^m\rightharpoonup u^\dagger$ in $\mathcal{V}^{N\times L}$ as $m\to \infty$ we derive that $u^{m,l}_n\to u^{\dagger,l}_n$ in $L^p(0,T;W^{\kappa, \hat{p}}(\Omega))$ strongly (w.l.o.g. for the whole sequence). %

	Thus, it suffices to show that $\tilde{f}(t,\mathcal{J}_\kappa u^{m,l})\to \tilde{f}(t,\mathcal{J}_\kappa u^{\dagger,l})$ in $L^q(0,T;L^{\hat{p}}(\Omega))^N\hookrightarrow \mathcal{W}^N$ as $m\to \infty$. Due to $\tilde{f}\in W^{1,\infty}(U)^N$, it induces a well-defined Nemytskii operator $\tilde{f}$ with $[\tilde{f}(\mathcal{J}_\kappa u)](t,x)= \tilde{f}(t,\mathcal{J}_\kappa u(t,x))$ for $u\in L^p(0,T;L^{\hat{p}}(\Omega))^N$ and a.e. $(t,x)\in (0,T)\times \Omega$. Hence, we derive for $m$ large enough such that $(t,\mathcal{J}_\kappa u^{m,l})\in U$,
	\begin{align*}
		\Vert \tilde{f}(t,\mathcal{J}_\kappa u^{m,l})-\tilde{f}(t,\mathcal{J}_\kappa u^{\dagger,l})\Vert_{L^q(0,T;L^{\hat{p}}(\Omega))}\leq c\Vert \tilde{f}\Vert_{W^{1,\infty}(U)}\Vert u^{m,l}-u^{\dagger,l}\Vert_{L^q(0,T;W^{\kappa,\hat{p}}(\Omega))}
	\end{align*}
	for some constant $c>0$ and thus, the left hand side approaches zero as $m\to \infty$.
	
	With this, identity \eqref{eq:hard_constraint} follows and by \eqref{Restim} together with uniqueness of the solution of \eqref{eq:limit_problem} that also $\tilde{\varphi}=\varphi^\dagger$ and $\tilde{f}=f^\dagger$, which concludes the proof.
\end{proof}
\begin{remark}Given that the last result essentially corresponds to a classical convergence result for inverse problems, an interesting future research direction is to what extent variational source conditions such as in \cite{Werner_2020} can be used to also obtain convergence rates here. Furthermore, also the viewpoint of statistical inverse problems (see e.g. \cite{Werner_2020} and \cite{Nickl2023}) on this setting is a relevant future research direction.
\end{remark}
\section{Conclusions}
In this work, we have considered the problem of learning structured models from data in an all-at-once framework. That is, the state, the nonlinearity and physical parameters, constituting the unknowns of a PDE system, are identified simultaneously based on noisy measured data of the state. It is shown that the main identification problem is wellposed in a general setup. The main results of this work are i) unique reconstructability of the state, the unknown nonlinearity and the parameters of the known physical term as regularization-minimizing solutions of a limit problem with full measurements, and ii) that reconstructions of these quantities based on incomplete, noisy measurements approximate the unique regularization-minimizing solutions truth in the limit. For that, the class of functions used to approximate the unknown nonlinearity must meet a regularity and approximation capacity condition. These conditions are discussed and ensured for the case of fully connected feed forward neural networks.

The results of this work provide a general framework that guarantees unique reconstructability in the limit of a practically useful all-at-once formulation in learning PDE models. This is particularly interesting because uniqueness of the quantities of interest is not given in general, but rather under certain conditions on the class of approximating functions and for certain regularization functionals. This provides an analysis-based guideline on which minimal conditions need to be ensured by practical implementations of PDE-based model learning setups in order to expect unique recovery of regularization-minimizing solutions in the limit.
\section*{Acknowledgement}
This research was funded in whole or in part by the Austrian Science Fund (FWF) 10.55776/F100800.
\appendix
\section{Neural networks}
\label{app:neural_networks}
In the following section we will provide proofs for Proposition \ref{prop:ass3_nn}, treating Assumption \ref{ass_param_app_class}, and Proposition \ref{prop:ass5_nn}, addressing Assumption \ref{ass_uniqueness}, ii) and vi), both results dealing with neural networks as introduced in Definition \ref{def:nn}.

\subsection{Proof of Proposition \ref{prop:ass3_nn} (Assumption \ref{ass_param_app_class} for neural networks)}
\label{app_subsec:nn_prop1}
 We start by proving the first part of Proposition \ref{prop:ass3_nn} on the induction of well-defined Nemytskii operators.
\begin{lemma}
	\label{lem:nemytski_u_a}
	Let Assumption \ref{ass_init_set} hold true. Suppose that $\sigma\in \mathcal{C}(\mathbb{R},\mathbb{R})$ is Lipschitz continuous with constant $L_\sigma$ (w.l.o.g. $L_\sigma\geq1$). Then $\mathcal{N}_\theta: (0,T)\times(\otimes_{k=0}^\kappa \mathbb{R}^{p_k})^N\to \mathbb{R}$ induces a well-defined Nemytskii operator $\mathcal{N}_\theta: (\otimes_{k=0}^\kappa \mathcal{V}_k^\times)^N\to L^p(0,T;L^{\hat{q}}(\Omega))$ via $[\mathcal{N}_\theta( u)](t)=\mathcal{N}_\theta(u(t,\cdot))$. The same applies to $\mathcal{N}_\theta:(\otimes_{k=0}^\kappa \mathcal{V}_k^\times)^N\to\mathcal{W}$.
\end{lemma}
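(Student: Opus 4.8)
The plan is to use that a feed-forward network with globally Lipschitz activation is itself globally Lipschitz, hence of linear growth, and then to run the standard superposition-operator argument in the Bochner setting, exploiting the embedding chain of Assumption \ref{ass_init_set}. Write $P := N\sum_{k=0}^\kappa p_k$, so that $\mathcal{N}_\theta$ maps $(0,T)\times\mathbb{R}^P\to\mathbb{R}$ with input dimension $n_0=1+P$. First I would record that each layer $L_{\theta_l}$ is an affine map post-composed with the coordinatewise $L_\sigma$-Lipschitz $\sigma$ (and $L_{\theta_L}$ is affine), so $\mathcal{N}_\theta=L_{\theta_L}\circ\cdots\circ L_{\theta_1}$ is globally Lipschitz on $\mathbb{R}^{1+P}$ with a constant $C_\theta$ depending only on $\theta$, $L_\sigma$ and the depth; in particular $|\mathcal{N}_\theta(t,z)|\le|\mathcal{N}_\theta(0,0)|+C_\theta(T+|z|)\le C(1+|z|)$ on $(0,T)\times\mathbb{R}^P$, and $z\mapsto\mathcal{N}_\theta(t,z)$ is $C_\theta$-Lipschitz uniformly in $t$.

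Next, for fixed $v\in(\otimes_{k=0}^\kappa\mathcal{V}_k^\times)^N$ I would establish pointwise-in-time well-definedness: by $V_k\hookrightarrow L^{\hat q}(\Omega)$ (Assumption \ref{ass_init_set}), for a.e.\ $t$ every component of $v(t)$ lies in $L^{\hat q}(\Omega)$, i.e.\ $v(t)\in L^{\hat q}(\Omega)^P$; since $\Omega$ has finite measure and $\mathcal{N}_\theta(t,\cdot)$ has linear growth, the superposition operator induced by the continuous function $z\mapsto\mathcal{N}_\theta(t,z)$ maps $L^{\hat q}(\Omega)^P$ into $L^{\hat q}(\Omega)$ (see \cite[Chapter 7]{Roubíček2013} and standard facts on Nemytskii operators on finite measure spaces), so $[\mathcal{N}_\theta(v)](t):=\mathcal{N}_\theta(t,v(t,\cdot))$ is a well-defined element of $L^{\hat q}(\Omega)$ with
\[
\|[\mathcal{N}_\theta(v)](t)\|_{L^{\hat q}(\Omega)}\le C\big(|\Omega|^{1/\hat q}+\|v(t)\|_{L^{\hat q}(\Omega)^P}\big).
\]

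The key step is strong (Bochner) measurability of $t\mapsto[\mathcal{N}_\theta(v)](t)$. Through the continuous embedding $(\otimes_{k=0}^\kappa\mathcal{V}_k^\times)^N\hookrightarrow L^p(0,T;L^{\hat q}(\Omega)^P)$ the function $v$ is strongly measurable, hence an a.e.\ limit of simple functions $v_j(t)=\sum_i\mathbf{1}_{A_i^j}(t)\,w_i^j$ with $w_i^j\in L^{\hat q}(\Omega)^P$. For fixed $w\in L^{\hat q}(\Omega)^P$ the map $(0,T)\ni t\mapsto\mathcal{N}_\theta(t,w(\cdot))\in L^{\hat q}(\Omega)$ is continuous, because $|\mathcal{N}_\theta(t,w(x))-\mathcal{N}_\theta(t',w(x))|\le C_\theta|t-t'|$ uniformly in $x$ and $\Omega$ is finite; consequently $t\mapsto\mathcal{N}_\theta(t,v_j(t,\cdot))$ is strongly measurable for each $j$. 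By the uniform-in-$t$ Lipschitz bound, $\|\mathcal{N}_\theta(t,v_j(t,\cdot))-\mathcal{N}_\theta(t,v(t,\cdot))\|_{L^{\hat q}(\Omega)}\le C_\theta\|v_j(t)-v(t)\|_{L^{\hat q}(\Omega)^P}\to0$ for a.e.\ $t$, so $\mathcal{N}_\theta(v)$ is an a.e.\ limit of strongly measurable maps, hence strongly measurable.

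Finally, raising the bound above to the $p$-th power and integrating over $(0,T)$ yields
\[
\|\mathcal{N}_\theta(v)\|_{L^p(0,T;L^{\hat q}(\Omega))}^p\le C\big(T+\|v\|_{L^p(0,T;L^{\hat q}(\Omega)^P)}^p\big)<\infty,
\]
where finiteness uses $V_k\hookrightarrow L^{\hat q}(\Omega)$, so $\mathcal{N}_\theta:(\otimes_{k=0}^\kappa\mathcal{V}_k^\times)^N\to L^p(0,T;L^{\hat q}(\Omega))$ is well defined; the second assertion follows since $L^{\hat q}(\Omega)\hookrightarrow W$ by Assumption \ref{ass_init_set} and, as $p\ge q$ and $T<\infty$, $L^p(0,T;L^{\hat q}(\Omega))\hookrightarrow L^q(0,T;W)=\mathcal{W}$. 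I expect the measurability step to be the only genuine obstacle — the time variable plays the double role of Bochner base variable and nonlinear argument, and the spatial and temporal integrability exponents differ — and it is precisely the joint Lipschitz continuity of $\mathcal{N}_\theta$, uniform over the finite-measure domain $\Omega$, that resolves it; everything else is bookkeeping with the embeddings of Assumption \ref{ass_init_set}.
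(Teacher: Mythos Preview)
Your proof is correct and follows the same overall architecture as the paper: global Lipschitz continuity of $\mathcal{N}_\theta$ gives linear growth, which in turn yields the pointwise-in-time $L^{\hat q}(\Omega)$ bound and the $L^p$-in-time integrability, with the final passage to $\mathcal{W}$ via $L^{\hat q}(\Omega)\hookrightarrow W$ and $p\ge q$.

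The one genuine methodological difference lies in the measurability step. The paper argues \emph{weak} measurability of $t\mapsto\mathcal{N}_\theta(t,u(t,\cdot))$ --- i.e., Lebesgue measurability of $t\mapsto\int_\Omega\mathcal{N}_\theta(t,u(t,x))w(x)\,dx$ for every $w\in L^{\hat q^*}(\Omega)$ --- and then invokes Pettis' theorem together with separability of $L^{\hat q}(\Omega)$ to upgrade to Bochner measurability. You instead establish \emph{strong} measurability directly: approximate $v$ by simple functions $v_j$, observe that $t\mapsto\mathcal{N}_\theta(t,w)$ is (Lipschitz) continuous into $L^{\hat q}(\Omega)$ for fixed $w$, so $t\mapsto\mathcal{N}_\theta(t,v_j(t,\cdot))$ is strongly measurable, and pass to the limit via the uniform-in-$t$ Lipschitz bound. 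Both routes are short and standard; yours is slightly more self-contained (no appeal to Pettis), while the paper's has the advantage that the scalar-integrand measurability is immediate from composition of measurable functions without needing the simple-function approximation. Either choice is fine here.
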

\begin{proof} First note that $\mathcal{N}_\theta$ is Lipschitz continuous with some Lipschitz constant 
	\begin{align}
		\label{Lipschitzconstant_NN}
		L_\theta \leq L_\sigma^{L-1}\prod_{l=1}^{L}\vert w^l\vert_\infty.
	\end{align}
	Hereinafter for $1\leq \alpha\leq \infty$ we denote by $\alpha^*$ the corresponding dual exponent defined by $\alpha^*:= \frac{\alpha}{\alpha-1}$ if $\alpha\in (0,\infty)$, $\alpha^*:=1$ if $\alpha=\infty$ and $\alpha^*=\infty$ if $\alpha=1$. Now fixing some $c\geq \Vert \mathcal{N}_\theta(0,0)\Vert_{L^{\hat{q}}(\Omega)}$ we have for $u=((u_1^k)_k, \dots, (u_N^k)_k)\in (\otimes_{k=0}^\kappa \mathcal{V}_k^\times)^N$ and a.e. $t\in(0,T)$ that 
	\begin{align*}
		\Vert \mathcal{N}_\theta(t,u(t,\cdot))\Vert_{L^{\hat{q}}(\Omega)}&\leq \Vert \mathcal{N}_\theta(0,0)\Vert_{L^{\hat{q}}(\Omega)}+\Vert \mathcal{N}_\theta(t,u(t,\cdot))-\mathcal{N}_\theta(0,0)\Vert_{L^{\hat{q}}(\Omega)}\\
		&\leq c+\sup_{\substack{\varphi\in L^{\hat{q}^*}(\Omega),\\ \Vert \varphi\Vert_{L^{\hat{q}^*}(\Omega)}\leq 1}}\langle \mathcal{N}_\theta(t,u(t,\cdot))-\mathcal{N}_\theta(0,0),\varphi\rangle_{L^{\hat{q}}(\Omega),L^{\hat{q}^*}(\Omega)}\\
		&\leq c+\sup_{\substack{\varphi\in L^{\hat{q}^*}(\Omega),\\ \Vert \varphi\Vert_{L^{\hat{q}^*}(\Omega)}\leq 1}}\int_\Omega\vert\mathcal{N}_\theta(t,u(t,x))-\mathcal{N}_\theta(0,0)\vert\vert\varphi(x)\vert\dx x\\
		&\leq c +L_\theta\sup_{\substack{\varphi\in L^{\hat{q}^*}(\Omega),\\ \Vert \varphi\Vert_{L^{\hat{q}^*}(\Omega)}\leq 1}}\int_\Omega(T+\vert u(t,x)\vert_1)\vert\varphi(x)\vert\dx x\\
		&\leq c+L_\theta(T\vert\Omega\vert^{1/\hat{q}}+\sum_{\substack{1\leq n\leq N\\ 0\leq k\leq \kappa}}\Vert u_n^k(t)\Vert_{L^{\hat{q}}(\Omega)^{p_k}})
	\end{align*}
	where the product norms correspond to the respective $\Vert \cdot \Vert_1$-norm. As $V\hookrightarrow L^{\hat{p}}(\Omega)\hookrightarrow L^{\hat{q}}(\Omega)$ and $\Vert u_n^0(t)\Vert_{V}<\infty$ for a.e. $t\in (0,T)$ due to $(u_n^0)_n\in\mathcal{V}^N\subseteq L^p(0,T;V)^N$ it holds true that $\Vert u_n^0(t)\Vert_{L^{\hat{q}}(\Omega)}<\infty$ for $1\leq n\leq N$. The embedding $V_k\hookrightarrow L^{\hat{q}}(\Omega)$ implies $V_k^\times \hookrightarrow L^{\hat{q}}(\Omega)^{p_k}$ by which we may infer again that $\Vert u_n^k(t)\Vert_{L^{\hat{q}}(\Omega)^{p_k}}<\infty$ for a.e. $t\in (0,T)$ as $u_n^k\in \mathcal{V}^\times_k= L^p(0,T; V_k^\times)$ for $1\leq n\leq N$, $1\leq k\leq \kappa$. Thus, it holds for a.e. $t\in(0,T)$ that $\mathcal{N}_\theta(t, u(t,\cdot))\in L^{\hat{q}}(\Omega)$ which is separable. Now $t\mapsto \mathcal{N}_\theta(t,u(t,\cdot))$ is weakly measurable, i.e.,
	\[
	t\mapsto \int_\Omega \mathcal{N}_\theta(t,u(t,x))w(x)\dx x
	\]
	is Lebesgue measurable for all $w\in L^{\hat{q}^*}(\Omega)$ which follows by standard arguments as $\mathcal{N}_\theta$ is continuous, $w, u(t,\cdot)$ Lebesgue measurable and measurability is preserved under integration. Employing Pettis Theorem (see \cite[Theorem 1.34]{Roubíček2013}) we obtain that $t\mapsto \mathcal{N}_\theta(t,u(t,\cdot))\in L^{\hat{q}}(\Omega)$ is Bochner measurable. Similarly as before one can show that for $u=((u_1^k)_k, \dots, (u_N^k)_k)\in (\otimes_{k=0}^\kappa \mathcal{V}_k^\times)^N$ it holds for some generic $\tilde{c}>0$,
	\begin{equation}
		\label{Bochnerinteg}
		\Vert\mathcal{N}_\theta( u)\Vert_{L^p(0,T;L^{\hat{q}}(\Omega))}\leq \tilde{c}(1+\sum_{\substack{1\leq n\leq N\\ 0\leq k\leq \kappa}}\Vert u_n^k\Vert_{L^p(0,T;L^{\hat{q}}(\Omega)^{p_k})})\leq \tilde{c}(1+\sum_{\substack{1\leq n\leq N\\ 0\leq k\leq \kappa}}\Vert u_n^k\Vert_{\mathcal{V}_k^\times})<\infty
	\end{equation}
	again by $V_k\hookrightarrow L^{\hat{q}}(\Omega)$ using the isomorphism $L^p(0,T;L^{\hat{q}}(\Omega))^{p_k}\cong L^p(0,T;L^{\hat{q}}(\Omega)^{p_k})$ for $0\leq k\leq \kappa$. Finally, we derive by separability of $L^{\hat{q}}(\Omega)$ that $\mathcal{N}_\theta(u)$ is Bochner integrable (see \cite[Section 1.5]{Roubíček2013}) and by $p\geq q$ together with $L^{\hat{q}}(\Omega)\hookrightarrow W$ that also the Nemytskii operator $\mathcal{N}_\theta:(\otimes_{k=0}^\kappa \mathcal{V}_k^\times)^N\to\mathcal{W}$ is well-defined.
\end{proof}
The next result addresses the remaining part of Proposition \ref{prop:ass3_nn} on continuity. 
\begin{lemma}[Strong-strong continuity of $\mathcal{N}$]
	\label{lem:weak_cont_n}
	Assume that $\sigma\in \mathcal{C}(\mathbb{R},\mathbb{R})$ is Lipschitz continuous with Lipschitz constant $L_\sigma$ (w.l.o.g. $L_\sigma\geq1$). Then under Assumption \ref{ass_init_set}, $\mathcal{N}:\Theta\times (\otimes_{k=0}^\kappa L^p(0,T;L^{\hat{p}}(\Omega)^{p_k}))^N\to L^q(0,T;L^{\hat{q}}(\Omega))$, $(\theta, v)\mapsto\mathcal{N}_\theta(v)$ is strongly-strongly continuous.
\end{lemma}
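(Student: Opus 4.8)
The plan is to exploit two quantitative features of feed-forward networks with globally Lipschitz activation and \emph{fixed} architecture: first, that $(\theta,z)\mapsto \mathcal{N}_\theta(z)$ is jointly continuous, being a finite composition of the jointly continuous maps $(w,\beta,z)\mapsto \sigma(wz+\beta)$ and $(w,\beta,z)\mapsto wz+\beta$; and second, the Lipschitz bound \eqref{Lipschitzconstant_NN}. So let $(\theta_j,v_j)\to(\theta,v)$ strongly in $\Theta\times(\otimes_{k=0}^\kappa L^p(0,T;L^{\hat{p}}(\Omega)^{p_k}))^N$; since $\Theta$ sits in a finite-dimensional space, $(\theta_j)_j$ is bounded, so there are constants $L^*\geq1$ and $C\geq0$, independent of $j$, with $L_{\theta_j},L_\theta\leq L^*$ and $|\mathcal{N}_{\theta_j}(0,0)|,|\mathcal{N}_\theta(0,0)|\leq C$. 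I will also use repeatedly the continuous embeddings $L^{\hat{p}}(\Omega)\hookrightarrow L^{\hat{q}}(\Omega)$ and $L^p(0,T;\,\cdot\,)\hookrightarrow L^q(0,T;\,\cdot\,)$, valid because $\Omega$ and $(0,T)$ are bounded and $\hat{q}\leq\hat{p}$, $q\leq p$ by Assumption \ref{ass_init_set}.

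First I would split, by the triangle inequality in $L^q(0,T;L^{\hat{q}}(\Omega))$,
$\|\mathcal{N}_{\theta_j}(v_j)-\mathcal{N}_\theta(v)\|\leq\|\mathcal{N}_{\theta_j}(v_j)-\mathcal{N}_{\theta_j}(v)\|+\|\mathcal{N}_{\theta_j}(v)-\mathcal{N}_\theta(v)\|$.
For the first summand, the pointwise estimate $|\mathcal{N}_{\theta_j}(t,v_j(t,x))-\mathcal{N}_{\theta_j}(t,v(t,x))|\leq L^*\,|v_j(t,x)-v(t,x)|_1$, followed by taking $L^{\hat{q}}(\Omega)$- and then $L^q(0,T)$-norms and invoking the two embeddings above, yields a bound by $L^*\,c\sum_{n,k}\|v_{j,n}^k-v_n^k\|_{L^p(0,T;L^{\hat{p}}(\Omega)^{p_k})}$, which tends to $0$ by hypothesis. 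Note this summand needs no subsequence extraction because it is controlled directly by the norm distance of $v_j$ to $v$.

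For the second summand I would use dominated convergence. Joint continuity of $(\theta,z)\mapsto\mathcal{N}_\theta(z)$ gives $\mathcal{N}_{\theta_j}(t,z)\to\mathcal{N}_\theta(t,z)$ for every $(t,z)$, hence $\mathcal{N}_{\theta_j}(t,v(t,x))\to\mathcal{N}_\theta(t,v(t,x))$ for a.e.\ $(t,x)\in(0,T)\times\Omega$. The Lipschitz bound gives, uniformly in $j$, $|\mathcal{N}_{\theta_j}(t,v(t,x))|\leq C+L^*\bigl(T+|v(t,x)|_1\bigr)=:g(t,x)$, and the same majorant works for $|\mathcal{N}_\theta(t,v(t,x))|$; by the embeddings $g\in L^q(0,T;L^{\hat{q}}(\Omega))$, since $t\mapsto\|\,|v(t,\cdot)|_1\,\|_{L^{\hat{q}}(\Omega)}\in L^p(0,T)\hookrightarrow L^q(0,T)$. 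Applying dominated convergence first on $\Omega$ (majorant $(2g(t,\cdot))^{\hat{q}}\in L^1(\Omega)$ for a.e.\ $t$) gives $\|\mathcal{N}_{\theta_j}(v)(t,\cdot)-\mathcal{N}_\theta(v)(t,\cdot)\|_{L^{\hat{q}}(\Omega)}^{\,q}\to0$ for a.e.\ $t$, and applying it once more in $t$ (majorant $2^q\|g(t,\cdot)\|_{L^{\hat{q}}(\Omega)}^{\,q}\in L^1(0,T)$) yields $\|\mathcal{N}_{\theta_j}(v)-\mathcal{N}_\theta(v)\|_{L^q(0,T;L^{\hat{q}}(\Omega))}\to0$. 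Combining both summands proves the claim, and the final sentence about $\mathcal{W}$ follows from $L^{\hat{q}}(\Omega)\hookrightarrow W$ and $p\geq q$ exactly as in Lemma \ref{lem:nemytski_u_a}.

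I expect the only delicate step to be the two-stage dominated-convergence argument in the Bochner space $L^q(0,T;L^{\hat{q}}(\Omega))$, specifically checking that the natural majorant $g$ lies in that space — which is precisely where the assumptions $q\leq p$, $\hat{q}\leq\hat{p}$ and boundedness of $\Omega$ and $(0,T)$ are used. The remaining ingredients (joint continuity of the network in $(\theta,z)$, and the uniform Lipschitz and offset bounds coming from boundedness of the convergent parameter sequence) are routine.
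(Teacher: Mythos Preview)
Your proof is correct, but your treatment of the $\theta$-dependence differs from the paper's. You freeze $v$ and invoke joint continuity of $(\theta,z)\mapsto\mathcal{N}_\theta(z)$ together with a two-stage dominated-convergence argument; the paper instead derives an \emph{explicit} pointwise bound via a layer-by-layer telescoping decomposition. Concretely, writing $\mathcal{N}_s(\theta^m,\theta,z)$ for the network whose last $s$ layers carry parameters $\theta$ and whose first $L-s$ layers carry $\theta^m$, the paper telescopes $\mathcal{N}(\theta^m,t,u)-\mathcal{N}(\theta,t,u)=\sum_{s=0}^{L-1}\bigl(\mathcal{N}_s-\mathcal{N}_{s+1}\bigr)$ and bounds each summand by $M\,|\theta^{L-s}-\theta^{L-s}_m|_\infty\,(|u|_1+C)$, arriving at the quantitative estimate
\[
|\mathcal{N}(\theta^m,t,u^m)-\mathcal{N}(\theta,t,u)|\leq M|u-u^m|_1+M(|u|_1+C)\sum_{s=1}^{L}|\theta^s-\theta^s_m|_\infty.
\]
This is then integrated directly to conclude, with no appeal to dominated convergence. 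Your first summand (the $v$-dependence) is handled essentially as in the paper.

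Your approach is shorter and conceptually cleaner for this lemma alone, since it avoids the explicit layer calculus. The paper's approach, however, yields a quantitative Lipschitz-in-$\theta$ estimate that it reuses later: strong--strong continuity of $\Theta\ni\theta\mapsto\mathcal{N}_\theta\in L^\infty(U)$ (needed for weak lower semicontinuity of $\theta\mapsto\|\mathcal{N}_\theta\|_{L^\rho(U)}$) is read off directly from the displayed bound, and the same telescoping structure is recycled in the proof of Lemma~\ref{neural_lsc} on $\nabla\mathcal{N}_\theta$. A dominated-convergence argument does not deliver $L^\infty$-continuity in $\theta$, so if you adopt your route you would need a separate (easy) argument for that later step.
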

\begin{proof}
	By analogous reasoning as in Lemma \ref{lem:nemytski_u_a} the Nemytskii operator $\mathcal{N}$ in the assertions of this lemma is well-defined. 
	
	Let $(\theta^m, u^m)\to (\theta, u)$ in $\Theta\times (\otimes_{k=0}^\kappa L^p(0,T;L^{\hat{p}}(\Omega)^{p_k}))^N$ as $m\to\infty$. We aim to show that $\mathcal{N}(\theta^m,u^m)\to \mathcal{N}(\theta, u)$ strongly in $L^q(0,T;L^{\hat{q}}(\Omega))$ as $m\to \infty$.\\
	
	Note that for $z\in \mathbb{R}^{1+N\sum_{k=0}^{\kappa}p_k}$ it holds
	\begin{align*}
		\mathcal{N}(\theta,z) &= (L_{\theta_L}\circ\dots\circ L_{\theta_1})(z),\\
		\mathcal{N}(\theta^m, z)&=(L_{\theta^m_L}\circ\dots\circ L_{\theta^m_1})(z)
	\end{align*}
	and define for $1\leq s\leq L-1$ the feed-forward neural networks  $\mathcal{N}_s(\theta^m,\theta, z)$  by
	\begin{align*}
		\mathcal{N}_s(\theta^m,\theta, z)&=(L_{\theta_L}\circ\dots\circ L_{\theta_{L-s+1}}\circ L_{\theta^m_{L-s}}\circ \dots\circ L_{\theta_1^m})(z),\\
		\mathcal{N}_0(\theta^m,\theta, z)&=\mathcal{N}(\theta^m, z),\\
		\mathcal{N}_L(\theta^m, \theta, z)&=\mathcal{N}(\theta, z).
	\end{align*}
	By $\theta^m\to \theta$ as $m\to \infty$ and continuity of $\theta^m \mapsto (L_{\theta_s^m}\circ\dots\circ L_{\theta_1^m})(0)$ for all $s=1,\ldots,L$ there exists $C>0$, used generically in the estimations below, with
	\[
	\vert \mathfrak{L}_s^m (0)\vert_\infty < C,~ ~ ~ \forall 1\leq s\leq L,
	\]
	for sufficiently large $m\in \mathbb{N}$, where we set
	\[
	\mathfrak{L}_s^m = L_{\theta_s^m}\circ\dots\circ L_{\theta_1^m}
	\]
	for $1\leq s\leq L$ and $\mathfrak{L}_0^m=\id$ the identity map. Recall that we aim to estimate 
	\[
	\Vert \mathcal{N}(\theta^m,u^m)-\mathcal{N}(\theta, u)\Vert_{L^q(0,T;L^{\hat{q}}(\Omega))}.
	\]
	For $M>0$ such that $L_\sigma^{L-1}\prod_{l=1}^L(\Vert w^l\Vert_\infty+1)<M$, we have for a.e. $(t,x)\in (0,T)\times \Omega$ (under abuse of notation omitting the dependence of $u,u^m$ on $(t,x)$) that $\vert \mathcal{N}(\theta^m,t,u^m)-\mathcal{N}(\theta,t,u)\vert$ is bounded by
	\begin{multline}
		\label{nn_ineq1}
		\vert \mathcal{N}(\theta^m,t,u^m)-\mathcal{N}(\theta^m, t, u)\vert+\vert \mathcal{N}(\theta^m,t,u)-\mathcal{N}(\theta,t, u)\vert \\
		\leq M\vert u-u^m\vert_1 
		+\sum_{s=0}^{L-1}\vert\mathcal{N}_{s+1}(\theta^m,\theta,t,u)-\mathcal{N}_{s}(\theta^m, \theta,t,u)\vert.
	\end{multline}
	For the second term estimate first $\vert\mathcal{N}_{s+1}(\theta^m,\theta,t,u)-\mathcal{N}_{s}(\theta^m, \theta,t,u)\vert$ by
	\begin{align}
		\label{nn_ineq3}
		\notag&= \vert (L_{\theta_L}\circ\dots\circ L_{\theta_{L-s}}\circ \mathfrak{L}_{L-s-1}^m)(t,u)-(L_{\theta_L}\circ\dots\circ L_{\theta_{L-s+1}}\circ L_{\theta_{L-s}^m}\circ \mathfrak{L}_{L-s-1}^m)(t,u)\vert\\
		\notag&\leq \left(L_\sigma^{s-1}\prod_{l=L-s+1}^{L}\vert w^l\vert_\infty\right)\vert(L_{\theta_{L-s}}\circ \mathfrak{L}_{L-s-1}^m)(t,u)-(L_{\theta_{L-s}^m}\circ \mathfrak{L}_{L-s-1}^m)(t,u)\vert_\infty\\
		\notag&\leq \left(L_\sigma^{s}\prod_{l=L-s+1}^{L}\vert w^l\vert_\infty\right) \left[\vert w^{L-s}-w_m^{L-s}\vert_\infty\vert(\mathfrak{L}_{L-s-1}^m)(t,u)\vert_\infty+\vert \beta^{L-s}-\beta_m^{L-s}\vert_\infty\right]\\
		\notag&\leq \left(L_\sigma^s\prod_{l=L-s+1}^{L}\vert w^l\vert_\infty\right) \vert \theta^{L-s}-\theta_m^{L-s}\vert_\infty\left(\vert(\mathfrak{L}_{L-s-1}^m)(t,u)-(\mathfrak{L}_{L-s-1}^m)(0)\vert_\infty +C\right)\\
		\notag&\leq \left(L_\sigma^s\prod_{l=L-s+1}^{L}\vert w^l\vert_\infty\right) \vert \theta^{L-s}-\theta_m^{L-s}\vert_\infty\left(L_\sigma^{L-s-1}\prod_{l=1}^{L-s-1}\vert w_m^l\vert_\infty(T+\vert u\vert_1) +C\right)\\
		&\leq M\vert \theta^{L-s}-\theta_m^{L-s}\vert_\infty\left(\vert u\vert_1 +C\right).%
	\end{align}
	Combining this with \eqref{nn_ineq1} it follows that
	\begin{align}
		\label{Ndiffestimation}
		\vert \mathcal{N}(\theta^m,t,u^m)-\mathcal{N}(\theta, t,u)\vert\leq M\vert u-u^m\vert_1
		+M(\vert u\vert_1+C)\sum_{s=1}^{L}\vert \theta^s-\theta^s_m\vert_\infty.%
	\end{align}
	To estimate $\Vert \mathcal{N}(\theta^m,u^m)-\mathcal{N}(\theta, u)\Vert_{L^q(0,T;L^{\hat{q}}(\Omega))}$ note that for $w^*\in L^{q^*}(0,T;L^{\hat{q}^*}(\Omega))$ with $\Vert w^*\Vert_{L^{q^*}(0,T;L^{\hat{q}^*}(\Omega))}\leq 1$ it holds for some generic constant $\tilde{C}>0$ by successively employing the upper bound \eqref{Ndiffestimation}, Minkowski's inequality in $L^{\hat{q}}(\Omega)$ and Hölder's inequality in time with $p,p^*$ that
	\begin{align*}
		&\int_0^T\Vert \mathcal{N}(\theta^m,t,u^m(t,\cdot))-\mathcal{N}(\theta,t, u(t,\cdot))\Vert_{L^{\hat{q}}(\Omega)}\Vert w^*(t)\Vert_{L^{\hat{q}^*}(\Omega)}\dx t\\
		\leq &\tilde{C}\int_0^T\bigg\Vert\bigg[\vert u(t,\cdot)-u^m(t,\cdot)\vert_1+(\vert u(t,\cdot)\vert_1+C)\sum_{s=1}^{L}\vert \theta^s-\theta^s_m\vert_\infty\bigg]\bigg\Vert_{L^{\hat{q}}(\Omega)}\Vert w^*(t)\Vert_{L^{\hat{q}^*}(\Omega)}\dx t\\
		\leq& \tilde{C}\bigg[\Vert u-u^m\Vert_{(\otimes_{k=0}^\kappa L^p(0,T;L^{\hat{q}}(\Omega)^{p_k}))^N}+(\Vert u\Vert_{(\otimes_{k=0}^\kappa L^p(0,T;L^{\hat{q}}(\Omega)^{p_k}))^N}+C)\sum_{s=1}^{L}\vert \theta^s-\theta^s_m\vert_\infty\bigg]
	\end{align*}
	due to $\Vert w^*\Vert_{L^{p^*}(0,T;L^{\hat{q}^*}(\Omega))}\leq 1$ as $p\geq q$ and $L^{\hat{p}}(\Omega)\hookrightarrow L^{\hat{q}}(\Omega)$. As the right hand side of the previous estimation is independent of $w^*$ we obtain that
	\begin{multline*}
		\Vert \mathcal{N}(\theta^m,u^m)-\mathcal{N}(\theta, u)\Vert_{L^q(0,T;L^{\hat{q}}(\Omega))}\\
		\leq \tilde{C}\bigg[\Vert u-u^m\Vert_{(\otimes_{k=0}^\kappa L^p(0,T;L^{\hat{p}}(\Omega)^{p_k}))^N} +(\Vert u\Vert_{(\otimes_{k=0}^\kappa L^p(0,T;L^{\hat{p}}(\Omega)^{p_k}))^N}+C)\sum_{s=1}^{L}\vert \theta^s-\theta^s_m\vert_\infty\bigg].
	\end{multline*}
	Now by $u_m\to u$ in $(\otimes_{k=0}^\kappa L^p(0,T;L^{\hat{p}}(\Omega)^{p_k}))^N$, $\Vert u\Vert_{(\otimes_{k=0}^\kappa L^p(0,T;L^{\hat{p}}(\Omega)^{p_k}))^N}<\infty$ and $\theta_m\to\theta$ as $m\to \infty$ we derive that the last argument converges to zero as $m\to \infty$. 
	
	Thus, it holds 
	\[
	\mathcal{N}(\theta^m,u^m)\to\mathcal{N}(\theta, u) ~ ~ ~ \text{as} ~ ~ ~ m\to\infty ~ ~ ~ \text{in} ~ L^q(0,T;L^{\hat{q}}(\Omega))
	\]
	yielding strong-strong continuity of the joint operator $\mathcal{N}$ as claimed.
\end{proof}
Combining Lemma \ref{lem:nemytski_u_a} and Lemma \ref{lem:weak_cont_n} concludes the result in Proposition \ref{prop:ass3_nn}.
\subsection{Proof of Proposition \ref{prop:ass5_nn} (Assumption \ref{ass_uniqueness} for neural networks)}
\label{app_subsec:nn_prop2}
First, we verify $W^{1,\infty}_{loc}$-regularity of the classes $\mathcal{F}_n^m$.
\begin{lemma}
	\label{lem:W1oolocreg}
	 Assume that $\sigma\in \mathcal{C}(\mathbb{R},\mathbb{R})$ is locally Lipschitz continuous and let $(\mathcal{F}_n^m)_n$ be given as in Definition \ref{def:model_fi}. Then $\mathcal{F}_n^m\subseteq W^{1,\infty}_\loc(\mathbb{R}^{1+N\sum_{k=0}^\kappa p_k})$ for $1\leq n\leq N$, $m\in \mathbb{N}$.
\end{lemma}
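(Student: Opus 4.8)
The plan is to note that a feed-forward network is a finite composition of globally Lipschitz maps, hence globally Lipschitz, and then to invoke the classical identification of Lipschitz functions with elements of $W^{1,\infty}_{\loc}$.

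\textbf{Step 1: each layer is Lipschitz.} Recalling Definition \ref{def:nn}, we have $\mathcal{N}_\theta=L_{\theta_L}\circ\dots\circ L_{\theta_1}$, where for $1\le l\le L-1$ the layer map $L_{\theta_l}(z)=\sigma(w^lz+\beta^l)$ is the composition of the affine map $z\mapsto w^lz+\beta^l$, which is Lipschitz with constant $|w^l|_\infty$, with the componentwise application of $\sigma$, which is Lipschitz with constant $L_\sigma$ since $\sigma$ is Lipschitz with that constant; the output layer $L_{\theta_L}(z)=w^Lz+\beta^L$ is affine, hence Lipschitz. Since all norms on the finite-dimensional layer spaces $\mathbb{R}^{n_l}$ are equivalent, the particular choice of norm in these estimates is immaterial.

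\textbf{Step 2: $\mathcal{N}_\theta$ is globally Lipschitz.} A composition of Lipschitz maps is Lipschitz with constant bounded by the product of the individual constants, so $\mathcal{N}_\theta:\mathbb{R}^{D}\to\mathbb{R}$, with $D=1+N\sum_{k=0}^\kappa p_k$ the input dimension, is globally Lipschitz with constant $L_\theta$ bounded as in \eqref{Lipschitzconstant_NN}.

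\textbf{Step 3: from Lipschitz to $W^{1,\infty}_{\loc}$.} By Rademacher's theorem a (globally, hence locally) Lipschitz function on $\mathbb{R}^D$ is differentiable at Lebesgue-a.e.\ point, its a.e.-defined gradient is bounded by the Lipschitz constant, and — the only nontrivial input — this a.e.\ classical gradient coincides with the distributional gradient (see e.g.\ \cite{Evans2010}). Hence $\nabla\mathcal{N}_\theta\in L^\infty(\mathbb{R}^D)^D$ with $\Vert\nabla\mathcal{N}_\theta\Vert_{L^\infty}\le L_\theta$, while $\mathcal{N}_\theta$ itself is continuous, hence locally bounded and locally integrable; therefore $\mathcal{N}_\theta\in W^{1,\infty}_{\loc}(\mathbb{R}^D)$. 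Applying this to every $\theta\in\Theta_n^m$ yields $\mathcal{F}_n^m\subseteq W^{1,\infty}_{\loc}(\mathbb{R}^{D})$ for all $1\le n\le N$ and $m\in\mathbb{N}$, which is the claim.

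\textbf{Expected main obstacle.} There is essentially no serious obstacle: the statement is a routine consequence of the composition stability of Lipschitz maps and of Rademacher's theorem. The single point deserving care is the identification of the pointwise a.e.\ derivative of a Lipschitz map with its weak derivative, which I would dispatch by a citation rather than a proof; everything else is bookkeeping.
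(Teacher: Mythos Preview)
Your proof is correct and follows essentially the same approach as the paper: establish global Lipschitz continuity of $\mathcal{N}_\theta$ via composition of Lipschitz layers (with the bound \eqref{Lipschitzconstant_NN}), then invoke Rademacher's theorem to conclude $W^{1,\infty}$-regularity on every bounded set. Your write-up is somewhat more explicit about the individual steps, but the argument is the same.
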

\begin{proof}
	Let $f\in \mathcal{F}_n^m$ for some $1\leq n\leq N$ and $m\in \mathbb{N}$.
	Since the activation function $\sigma$ is supposed to be locally Lipschitz continuous, $f$ is also locally Lipschitz continuous. %
	This follows from the fact that continuous functions map bounded sets to bounded sets and thus, recalling the layer-wise structure of $f$ in Definition \ref{def:nn}, for any bounded $U\subseteq \mathbb{R}^{1+N\sum_{k=0}^\kappa p_k}$ the instance $f$ is Lipschitz continuous on $U$ with a constant depending on local Lipschitz constants of $\sigma$ and norms of the weights.
	Rademacher's Theorem finally yields $f\in W^{1,\infty}(U)$ and thus the assertion of the lemma.
\end{proof}
The next result shows for bounded $U\subseteq \mathbb{R}^D$ strong-strong continuity of the map $\Theta\ni \theta\mapsto \nabla\mathcal{N}_\theta\in L^\infty(U)$ for $\mathcal{C}^1$-regular and Lipschitz continuous activation functions, in particular implying weak lower semicontinuity as claimed in Proposition \ref{prop:ass5_nn}.
\begin{lemma}
	\label{neural_lsc}
	Let $U\subseteq \mathbb{R}^D$ be bounded. Furthermore, let the activation function $\sigma$ of the class of parameterized approximation functions fulfill $\sigma \in \mathcal{C}^1(\mathbb{R},\mathbb{R})$. Then the map
	\[
	\Theta\ni \theta\mapsto \nabla\mathcal{N}_\theta\in L^\infty(U)
	\]
	is strongly-strongly continuous.
\end{lemma}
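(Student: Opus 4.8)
The idea is to differentiate the composition $\mathcal{N}_\theta = L_{\theta_L}\circ\dots\circ L_{\theta_1}$ explicitly via the chain rule and reduce uniform convergence of $\nabla\mathcal{N}_{\theta^m}$ on $U$ to uniform convergence of each layer-wise factor. Write $a^0=x$, $a^l=a^l_\theta(x):=L_{\theta_l}(a^{l-1})$, so that for $1\leq l\leq L-1$ one has $a^l=\sigma(z^l)$ with pre-activation $z^l=z^l_\theta(x):=w^la^{l-1}+\beta^l$, and $a^L=w^La^{L-1}+\beta^L$. Since $\sigma\in\mathcal{C}^1(\mathbb{R},\mathbb{R})$, each $\mathcal{N}_\theta$ is $\mathcal{C}^1$ and the chain rule gives, for every $x$,
\[
	\nabla\mathcal{N}_\theta(x)=w^L\prod_{l=L-1}^{1}\bigl(\diag(\sigma'(z^l_\theta(x)))\,w^l\bigr),
\]
the product being taken with decreasing index. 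It therefore suffices to show that, as $\theta^m\to\theta$ in $\Theta$, each factor $\diag(\sigma'(z^l_{\theta^m}(\cdot)))\,w^l_m$ converges uniformly on $U$ to $\diag(\sigma'(z^l_\theta(\cdot)))\,w^l$ and stays uniformly bounded in $m$; a finite product of uniformly bounded, uniformly convergent matrix-valued functions then converges uniformly, which is the claim.

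The first step is to control the intermediate quantities. By a straightforward induction over $l=1,\dots,L$, using that each $L_{\theta_l}$ is Lipschitz with a constant built from $L_\sigma$ and $\vert w^l\vert_\infty$ (and that $\theta^m\to\theta$, so the weights $w^l_m,\beta^l_m$ stay bounded in $m$), one obtains on the bounded set $U$ both
\[
	\sup_m\ \sup_{x\in U}\ \vert a^l_{\theta^m}(x)\vert_\infty<\infty \quad\text{and}\quad \sup_{x\in U}\ \vert a^l_{\theta^m}(x)-a^l_\theta(x)\vert_\infty\to 0 \ \ (m\to\infty),
\]
the second bound being essentially the layer-wise Lipschitz estimate already carried out in the proof of Lemma~\ref{lem:weak_cont_n} (cf.~\eqref{Ndiffestimation}), now made uniform in $x$ since $U$ is bounded. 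Consequently the pre-activations $z^l_{\theta^m}(\cdot)=w^l_ma^{l-1}_{\theta^m}(\cdot)+\beta^l_m$ converge uniformly on $U$ to $z^l_\theta(\cdot)$ and take values in a fixed compact set $K_l\subseteq\mathbb{R}^{n_l}$ for all large $m$. The second step uses the regularity of $\sigma$: since $\sigma'$ is continuous it is uniformly continuous on $K_l$, so $\sigma'(z^l_{\theta^m}(x))\to\sigma'(z^l_\theta(x))$ uniformly in $x\in U$, while $\vert\sigma'\vert\leq L_\sigma$ everywhere provides a uniform bound. Together with $w^l_m\to w^l$, this gives uniform convergence and uniform boundedness of each factor, and hence, by the reduction above, $\nabla\mathcal{N}_{\theta^m}\to\nabla\mathcal{N}_\theta$ in $L^\infty(U)$, i.e.\ the asserted strong-strong continuity.

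The only point requiring some care is the inductive propagation through the layers — establishing the uniform boundedness and uniform convergence of the $a^l_{\theta^m}$ on $U$ — but this is exactly the layer-by-layer argument already performed for Lemma~\ref{lem:weak_cont_n}, so no genuinely new difficulty arises. The hypothesis $\sigma\in\mathcal{C}^1$ enters twice: it makes the chain-rule formula for $\nabla\mathcal{N}_\theta$ classically valid, and it supplies the continuous derivative $\sigma'$ needed in the final step; the non-smooth ReLU case lies outside this lemma and is treated separately afterwards.
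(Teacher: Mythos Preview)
Your proof is correct and follows essentially the same approach as the paper: write $\nabla\mathcal{N}_\theta$ via the chain rule as a finite product of matrix-valued factors, use the layer-wise Lipschitz estimates (as in Lemma~\ref{lem:weak_cont_n}) to get uniform convergence and uniform boundedness of the intermediate pre-activations on the bounded set $U$, and then invoke continuity of $\sigma'$ (uniform on compacta) together with the global bound $\vert\sigma'\vert\leq L_\sigma$ to pass to the limit in each factor. The paper organizes the argument through an additional telescoping over hybrid networks $\mathcal{N}_s(\theta^m,\theta,\cdot)$ before telescoping the product, whereas you telescope the product directly; this is a minor presentational difference, not a different idea.
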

\begin{proof}
	We consider first the case that $\sigma\in \mathcal{C}^1(\mathbb{R},\mathbb{R})$ is Lipschitz continuous with constant $L_\sigma$ (w.l.o.g. $L_\sigma\geq 1$).
	Let $(\theta^m)_m\subseteq \Theta$ such that $\theta^m\to \theta\in \Theta$ as $m\to \infty$. Maintaining the notation in the proof of Lemma \ref{lem:weak_cont_n} we further set for $1\leq k\leq l\leq L$
	\[
	\mathfrak{L}_{k,l}=L_{\theta_l}\circ\dots\circ L_{\theta_k}
	\]
	with $\mathfrak{L}_{k,l}=\id$ the identity map for $k>l$. Then we obtain for fixed $z\in U$ that
	\begin{multline*}
		\vert \nabla\mathcal{N}_{\theta^m}(z)-\nabla\mathcal{N}_\theta(z)\vert_\infty\leq \sum_{s=0}^{L-1}\vert\nabla \mathcal{N}_{s+1}(\theta^m, \theta, z)-\nabla\mathcal{N}_s(\theta^m, \theta, z)\vert_\infty\\
		=\sum_{s=0}^{L-1}\vert \nabla[(\mathfrak{L}_{L-s,L}\circ\mathfrak{L}_{L-s-1}^m)(z)]-\nabla[(\mathfrak{L}_{L-s+1,L}\circ\mathfrak{L}_{L-s}^m)(z)]\vert_\infty.
	\end{multline*}
	We consider a summand of the last sum for fixed $0\leq s\leq L-1$ and show convergence to zero for $m\to \infty$. For that we introduce the following simplifying notation for products of matrices $C_0\cdot\hdots\cdot C_n$ for $n\in \mathbb{N}$ where the row and column dimensions fit for the product to make sense, by
	\[
	\mathcal{P}_{l=0}^n C_l := C_0\cdot\hdots\cdot C_n.
	\]
	Furthermore, we set $\mathcal{P}_{l=k}^mC_l:=1$ for $k>m$. Defining
	\[
	A_{l,s}^m(z) = \sigma'(w^{L-l-1}(\mathfrak{L}_{L-s,L-l-2}\circ \mathfrak{L}_{L-s-1}^m)(z)+b^{L-l-1})w^{L-l-1} ~ ~ \text{for } ~ 0\leq l\leq s-1,
	\]
	\[
	B_{l,s}^m(z) = \sigma'(w^{L-l-1}(\mathfrak{L}_{L-s+1,L-l-2}\circ \mathfrak{L}_{L-s}^m)(z)+b^{L-l-1})w^{L-l-1} ~ ~ \text{for } ~ 0\leq l\leq s-2,
	\]
	\[
	\text{and} ~ ~ B_{s-1,s}^m(z) = \sigma'(w_m^{L-s}\mathfrak{L}_{L-s-1}^m(z)+b_m^{L-s})w_m^{L-s}
	\]
	for $z\in U$, we derive by the chain rule that $$\vert\nabla[(\mathfrak{L}_{L-s,L}\circ\mathfrak{L}_{L-s-1}^m)(z)]-\nabla[(\mathfrak{L}_{L-s+1,L}\circ\mathfrak{L}_{L-s}^m)(z)]\vert_\infty$$ can be estimated by
	\begin{align}
		\label{estimation_AB}
		\notag&=\vert w^L(\mathcal{P}_{l=0}^{s-1}A_{l,s}^m(z)-\mathcal{P}_{l=0}^{s-1}B_{l,s}^m(z))\nabla[\mathfrak{L}_{L-s-1}^m(z)]\vert_\infty\\
		\notag&\leq \vert w^L\vert_\infty\vert\nabla[\mathfrak{L}_{L-s-1}^m(z)]\vert_\infty\sum_{r=0}^{s-1}\vert(\mathcal{P}_{l=0}^{r-1}B_{l,s}^m(z))(A_{r,s}^m(z)-B_{r,s}^m(z))(\mathcal{P}_{l=r+1}^{s-1}A_{l,s}^m(z))\vert_\infty\\
		&\leq \vert w^L\vert_\infty\vert\nabla[\mathfrak{L}_{L-s-1}^m(z)]\vert_\infty\sum_{r=0}^{s-1}(\prod_{l=0}^{r-1}\vert B_{l,s}^m(z)\vert_\infty)\vert A_{r,s}^m(z)-B_{r,s}^m(z)\vert_\infty(\prod_{l=r+1}^{s-1}\vert A_{l,s}^m(z)\vert_\infty).
	\end{align}
	Let $M>0$ such that $L_\sigma^{L-1}\prod_{l=1}^L(\vert w^l\vert_\infty+1)<M$ and $m\in\mathbb{N}$ sufficiently large such that $\vert w^l_m-w^l\vert_\infty<1$ for $1\leq l\leq L$ which is possible due to $\theta^m\to \theta$ as $m\to \infty$. As $\vert A_{l,s}^m(z)\vert_\infty,\vert B_{l,s}^m(z)\vert_\infty\leq L_\sigma M$ for $0\leq s\leq L-1, 1\leq l\leq s-1$, $\vert w^L\vert_\infty<M$ and 
	\[
	\nabla[\mathfrak{L}_{L-s-1}^m(z)] = \mathcal{P}_{l=0}^{L-s-2}\sigma'(w^{L-s-l-1}_m\mathfrak{L}_{L-s-l-2}^m(z)+b^{L-s-l-1}_m)w^{L-s-l-1}_m
	\]
	by the chain rule, implying $\vert\nabla[\mathfrak{L}_{L-s-1}^m(z)]\vert_\infty\leq L_\sigma^{L-s-1} M$, it remains to show that
	\begin{align}
		\label{ABconvergence}
		\lim_{m\to \infty}\vert A_{r,s}^m(z)-B_{r,s}^m(z)\vert_\infty =0.
	\end{align}
	This follows as $\theta^m\to \theta$, $\mathfrak{L}_{L-s,L-l-2}\circ \mathfrak{L}_{L-s-1}^m\to \mathfrak{L}_{1, L-l-2}$ in $L^\infty(U)$ for $0\leq l\leq s-1$ and $\mathfrak{L}_{L-s+1,L-l-2}\circ \mathfrak{L}_{L-s}^m\to \mathfrak{L}_{1, L-l-2}$ in $L^\infty(U)$ for $0\leq l\leq s-2$ as $m\to \infty$ by similar considerations as in \eqref{nn_ineq3} due to continuity of $\sigma'$. As the convergence in \eqref{ABconvergence} holds uniformly for $z\in U$ we recover the assertion of the lemma that $\nabla \mathcal{N}_{\theta^m}\to \nabla\mathcal{N}_\theta\in L^\infty(U)$ as $m\to \infty$.\\
	
	In case $\sigma\in \mathcal{C}^1(\mathbb{R},\mathbb{R})$ is not Lipschitz continuous (such as ReQU), the result follows by a similar strategy as above. An adaption concerns uniform boundedness of the $\sigma'$ terms in $A_{l,s}^m(z), B_{l,s}^m(z)$ for $z\in U$, which follows from uniform convergence $\mathfrak{L}_{L-s+1,L-l-2}\circ \mathfrak{L}_{L-s}^m\to \mathfrak{L}_{1, L-l-2}$ in $L^\infty(U)$ as $m\to \infty$ and the fact that the latter map $U$ to bounded sets.
\end{proof}
Finally, in case of the activation being the Rectified Linear Unit, we recover for bounded $U\subseteq\mathbb{R}^D$ weak lower semicontinuity of the map $\Theta\ni \theta\mapsto \Vert\nabla\mathcal{N}_\theta\Vert_{L^\infty(U)}$ as claimed in Proposition \ref{prop:ass5_nn}.
\begin{lemma}
	\label{neural_lsc2}
	Let $U\subseteq \mathbb{R}^D$ be bounded. Furthermore, let the activation function $\sigma$ of the class of parameterized approximation functions be the Rectified Linear Unit. Then for $(\theta^m)_m\subseteq \Theta$ with $\theta^m\to \theta\in \Theta$ as $m\to \infty$ it holds
	\[
	\Vert\nabla\mathcal{N}_\theta\Vert_{L^\infty(U)}\leq \liminf_{m\to\infty}\Vert\nabla\mathcal{N}_{\theta^m}\Vert_{L^\infty(U)}.
	\]
\end{lemma}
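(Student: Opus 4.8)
The point of the lemma is that, in contrast to the $\mathcal{C}^1$ situation of Lemma \ref{neural_lsc}, the map $\theta\mapsto\nabla\mathcal{N}_\theta$ is \emph{not} continuous for the piecewise affine ReLU activation, so one cannot get the assertion from a direct continuity argument and must settle for lower semicontinuity. The plan is to transfer the limit passage to the \emph{values} of the networks, where continuity is available. Evaluating \eqref{Ndiffestimation} at a fixed point (i.e.\ with $u^m=u$) gives $\mathcal{N}_{\theta^m}\to\mathcal{N}_\theta$ uniformly on the bounded set $U$, and \eqref{Lipschitzconstant_NN} together with $\theta^m\to\theta$ shows that the $\mathcal{N}_{\theta^m}$ are Lipschitz with a constant bounded uniformly in $m$ (in particular $\mathcal{N}_\theta$ is Lipschitz, so $\Vert\nabla\mathcal{N}_\theta\Vert_{L^\infty(U)}<\infty$). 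The idea is then to reconstruct $\Vert\nabla\mathcal{N}_\theta\Vert_{L^\infty(U)}$ from difference quotients of $\mathcal{N}_\theta$ and to bound the corresponding difference quotients of $\mathcal{N}_{\theta^m}$ by $\Vert\nabla\mathcal{N}_{\theta^m}\Vert_{L^\infty(U)}$.

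In detail, write $|\cdot|$ for the norm on $\mathbb{R}^D$ underlying $\Vert\nabla(\cdot)\Vert_{L^\infty(U)}$ and $|\cdot|_*$ for its dual norm. By Rademacher's theorem $\mathcal{N}_\theta$ is differentiable at a.e.\ $z\in U$, and since the right-hand side of the claimed inequality does not depend on $z$, it suffices to prove $|\nabla\mathcal{N}_\theta(z)|\le\liminf_{m\to\infty}\Vert\nabla\mathcal{N}_{\theta^m}\Vert_{L^\infty(U)}$ for every such $z$ and then take the essential supremum over $z\in U$. Fixing such a $z$, I would pick $v\in\mathbb{R}^D$ with $|v|_*\le1$ and $\nabla\mathcal{N}_\theta(z)\cdot v=|\nabla\mathcal{N}_\theta(z)|$, which is possible because the bidual norm coincides with $|\cdot|$ on the finite-dimensional space $\mathbb{R}^D$ and the defining supremum is attained. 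For $h>0$ small enough that the segment $[z,z+hv]$ lies in a ball $B\subseteq U$ centred at $z$, differentiability of $\mathcal{N}_\theta$ at $z$ gives $h^{-1}(\mathcal{N}_\theta(z+hv)-\mathcal{N}_\theta(z))\to|\nabla\mathcal{N}_\theta(z)|$ as $h\to0^+$ (for the ReLU case one even has exact equality for all small $h$, since $\mathcal{N}_\theta$ is affine near a.e.\ such $z$); and for each fixed such $h$, uniform convergence yields $\mathcal{N}_{\theta^m}(z+hv)-\mathcal{N}_{\theta^m}(z)\to\mathcal{N}_\theta(z+hv)-\mathcal{N}_\theta(z)$ as $m\to\infty$.

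The crux is the increment estimate
\[
	|\mathcal{N}_{\theta^m}(z+hv)-\mathcal{N}_{\theta^m}(z)|\le h\,\Vert\nabla\mathcal{N}_{\theta^m}\Vert_{L^\infty(B)}\le h\,\Vert\nabla\mathcal{N}_{\theta^m}\Vert_{L^\infty(U)},
\]
valid for every fixed small $h$ and every $m$ by the choice $|v|_*\le1$. Combining it with the two convergences above gives $h^{-1}(\mathcal{N}_\theta(z+hv)-\mathcal{N}_\theta(z))\le\liminf_m\Vert\nabla\mathcal{N}_{\theta^m}\Vert_{L^\infty(U)}$, and then $h\to0^+$ finishes the argument. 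I expect the main obstacle to be precisely this increment estimate: one cannot bound the increment of $\mathcal{N}_{\theta^m}$ by an almost-everywhere gradient bound along a \emph{fixed} segment without further care, since the Lebesgue-null non-differentiability set of a ReLU network could in principle contain that segment. I would handle this by mollifying $\mathcal{N}_{\theta^m}$ on $B$, applying the fundamental theorem of calculus along the segment to the smooth approximation together with the inequality $|w\cdot v|\le|w|\,|v|_*$ and Jensen's inequality for $|\cdot|$, and letting the mollification parameter tend to zero (alternatively, exploit that a ReLU network is affine off finitely many hyperplanes, so along a generic segment the fundamental theorem of calculus applies directly).
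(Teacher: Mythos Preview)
Your argument is correct and takes a genuinely different route from the paper. The paper's proof exploits the explicit layer structure of the ReLU network: it decomposes $U$ according to whether $\nabla\mathcal{N}_\theta$ vanishes, uses that the boundary of this decomposition is a null set, and then argues via the chain-rule expansion \eqref{estimation_AB} that at a.e.\ point $z$ with $\nabla\mathcal{N}_\theta(z)\neq0$ the intermediate pre-activations are bounded away from zero, so that for large $m$ one lands in the smooth regime of $\sigma'$ and obtains pointwise convergence $\nabla\mathcal{N}_{\theta^m}(z)\to\nabla\mathcal{N}_\theta(z)$. From this the pointwise $\liminf$ inequality and then the assertion follow.

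Your approach bypasses all of this structure: you only use that $\mathcal{N}_{\theta^m}\to\mathcal{N}_\theta$ uniformly on $U$ (from \eqref{Ndiffestimation}) and that for a Lipschitz function on a ball the essential supremum of the gradient controls increments along segments, recovered by mollification. This is precisely the classical fact that the Lipschitz constant is lower semicontinuous under pointwise convergence, translated via $\mathrm{Lip}(f|_B)=\Vert\nabla f\Vert_{L^\infty(B)}$. The advantage of your route is its generality and robustness: it applies verbatim to any family of Lipschitz functions depending continuously (in $L^\infty_{\mathrm{loc}}$) on a parameter, with no reference to the network architecture or to the piecewise-affine structure of ReLU. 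The paper's route, by contrast, yields the slightly stronger intermediate statement of a.e.\ pointwise convergence of the gradients, at the cost of a more delicate, architecture-specific argument. Your handling of the one genuine subtlety---that the fundamental theorem of calculus cannot be applied naively along a fixed segment through the non-differentiability set---via mollification (or via the finitely-many-hyperplanes structure of ReLU) is the right fix.
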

\begin{proof}
	Let $(\theta^m)_m\subseteq \Theta$ with $\theta^m\to \theta\in \Theta$ as $m\to \infty$. We show that
	\begin{align}
		\label{weaklsc_z}
		\vert\nabla\mathcal{N}_\theta(z)\vert_\infty\leq \liminf_{m\to \infty}\vert \nabla\mathcal{N}_{\theta^m}(z)\vert_\infty
	\end{align}
	for a.e. $z\in U$ which further implies
	\[
	\vert\nabla\mathcal{N}_\theta(z)\vert_\infty\leq \esssup_{x\in U}\liminf_{m\to \infty}\vert \nabla\mathcal{N}_{\theta^m}(x)\vert_\infty\leq \liminf_{m\to \infty}\Vert \nabla\mathcal{N}_{\theta^m}\Vert_{L^\infty(U)}
	\]
	and the assertion of the lemma by taking the essential supremum over $z\in U$. Now for $z\in [(\nabla\mathcal{N}_\theta)^{-1}(\left\{0\right\})]^\circ$ an inner point of the preimage of $\left\{0\right\}$ under $\nabla\mathcal{N}_\theta$. it holds that $\nabla\mathcal{N}_\theta(z)=0$ implying \eqref{weaklsc_z}. It remains to verify \eqref{weaklsc_z} for $z\in [U\backslash (\nabla\mathcal{N}_\theta)^{-1}(\left\{0\right\})]^\circ$ as the boundary $\partial[(\nabla\mathcal{N}_\theta)^{-1}(\left\{0\right\})]$ is a zeroset in $\mathbb{R}^D$. Following the proof of Lemma \ref{neural_lsc} we recover the estimation in \eqref{estimation_AB}. Again as $\theta^m\to \theta$, $\mathfrak{L}_{L-s,L-l-2}\circ \mathfrak{L}_{L-s-1}^m\to \mathfrak{L}_{1,L-l-2}$ in $L^\infty(U)$ for $0\leq l\leq s-1$ and $\mathfrak{L}_{L-s+1,L-l-2}\circ \mathfrak{L}_{L-s}^m\to \mathfrak{L}_{1,L-l-2}$ in $L^\infty(U)$ for $0\leq l\leq s-2$ as $m\to \infty$ and $w^{k+1}\mathfrak{L}_{1,k}(z)+b^{k+1}\neq 0$ for $1\leq k\leq L-2$ due to $\nabla \mathcal{N}_\theta(z)\neq 0$, for $m$ sufficiently large we end up in the smooth regime of $\sigma'$ such that the previous arguments yield $\lim_{m\to \infty} \nabla\mathcal{N}_{\theta^m}(z)=\nabla\mathcal{N}_\theta(z)$ for $z\in [U\backslash (\nabla\mathcal{N}_\theta)^{-1}(\left\{0\right\})]^\circ$ implying \eqref{weaklsc_z} and concluding the assertions of the lemma.
\end{proof}
Combining the Lemmata \ref{lem:W1oolocreg}, \ref{neural_lsc} and \ref{neural_lsc2} concludes the result in Proposition \ref{prop:ass5_nn}.
\subsection{Lifting technique}
\label{app_subsec:lifting}
In this subsection we discuss a lifting approach that shows how approximation results, such as in Proposition \ref{prop:morina_holler}, covering \eqref{f_approximation1} can be used to infer Assumption \ref{ass_uniqueness}, iv), i.e., also achieve the convergence $\Vert \nabla f_{\theta^m}\Vert_{L^\infty(U)}\to \Vert \nabla f\Vert_{L^\infty(U)}$ as $m\to \infty$.
For that, one needs to impose higher regularity on $f$, such as $W^{2,\infty}$- or $\mathcal{C}^2$-regularity and assume that the bounded domain $U\subseteq \mathbb{R}^D$ of functions in $\mathcal{F}_n^m$ is star-shaped with some center given by $x_0\in U$. The basic idea is to approximate the partial derivatives of $f$ by the approximation result at hand (such as in Proposition \ref{prop:morina_holler})  and lift the approximation property to the function. More concretely, let $g_{\tilde{\theta}^m}$ approximate $\nabla f$ uniformly on $U$ by rate $\beta>0$ and $f_{\tilde{\eta}^m}$ the function $f$ by rate $\gamma>0$. Then with $\diam U$ denoting the diameter of $U$ we have
\begin{align*}
	&\Vert f(x)-f_{\tilde{\eta}^m}(x_0)-\int_0^1 g_{\tilde{\theta}^m}(x_0+t(x-x_0))\cdot (x-x_0)\dx t\Vert_{L^\infty(U)}\\
	&\leq \vert f(x_0)-f_{\tilde{\eta}^m}(x_0)\vert+\esssup\limits_{x\in U}\vert \int_0^1((\nabla f-g_{\tilde{\theta}^m})(x_0+t(x-x_0)))\cdot (x-x_0)\dx t\vert\\
	&\leq cm^{-\gamma}+c m^{-\beta}\diam U.
\end{align*}
Furthermore, it holds true by the Leibniz integral rule that
\begin{align*}
	&\Vert \nabla_x f-\nabla_x(f_{\tilde{\eta}^m}(x_0)+\int_0^1 g_{\tilde{\theta}^m}(x_0+t(x-x_0))\cdot (x-x_0)\dx t)\Vert_{L^\infty(U)}\\
	&=\Vert \nabla_x f-\int_0^1 t \nabla g_{\tilde{\theta}^m}(x_0+t(x-x_0))\cdot(x-x_0)+g_{\tilde{\theta}^m}(x_0+t(x-x_0))\dx t\Vert_{L^\infty(U)}\\
	&=\Vert \nabla_x f-\int_0^1 \frac{\dx}{\dx t}(tg_{\tilde{\theta}^m}(x_0+t(x-x_0)))\dx t\Vert_{L^\infty(U)}\\
	&=\Vert \nabla_x f-g_{\tilde{\theta}^m}(x)\Vert_{L^\infty(U)}\\
	&\leq cm^{-\beta}.
\end{align*}
Note that the Leibniz integral rule is applicable as $\int_0^1 g_{\tilde{\theta}^m}(x_0+t(x-x_0))\cdot (x-x_0)\dx t$ is finite, $t \nabla g_{\tilde{\theta}^m}(x_0+t(x-x_0))\cdot(x-x_0)+g_{\tilde{\theta}^m}(x_0+t(x-x_0))$ exists and is majorizable by $\diam U\Vert g_{\tilde{\theta}^m}\Vert_{W^{1,\infty}(U)}$. This shows that $f$ is approximated by $f_{\tilde{\eta}^m}(x_0)+\int_0^1 g_{\tilde{\theta}^m}(x_0+t(x-x_0))\cdot (x-x_0)\dx t$ in $W^{1,\infty}(U)$ as $m\to \infty$ with rate given by $\min(\beta,\gamma)$.
\section{Physical term}
\label{app:physical_term}
In the following section we will provide proofs for Proposition \ref{prop:ass4_linear} and Proposition \ref{prop:ass4_nonlinear}, addressing Assumption \ref{ass_phys_term} on the physical term both in the linear and nonlinear case.
\subsection{Linear case}
\label{app:linear}
We prove Proposition \ref{prop:ass4_linear}, starting with the first part on Assumption \ref{ass_phys_term}, i), the induction of well-defined Nemytskii operators.
\begin{lemma}
	\label{lem:phys_lin_nemyt}
	Let Assumption \ref{ass_init_set} hold true and $\tilde{V}\hookrightarrow W^{\omega, \hat{p}}(\Omega)$. Suppose that $t\mapsto \Phi_n(t,\varphi)$ and $t\mapsto \Psi(t,\varphi)$ are measurable for all $\varphi\in X_\varphi$ and $s_\beta$ fulfill \eqref{conditions_sbeta}. Assume that there exist functions $\mathcal{B}_1, \mathcal{B}_2:\mathbb{R}_{\geq 0}\to \mathbb{R}_{\geq 0}$ that map bounded sets to bounded sets and $\phi\in L^{\frac{pq}{p-q}}(0,T)$ (with $\phi\in L^{\infty}(0,T)$ if $p=q$), $\psi\in L^q(0,T)$ such that
	\begin{align}
		\label{growth_cond_lin_terms}
		\Vert \Phi_{n,\beta}(t,\varphi)\Vert_{L^{s_\beta}(\Omega)}\leq \phi(t)\mathcal{B}_1(\Vert\varphi\Vert_{X_\varphi}), ~ ~ ~ 	\Vert \Psi(t,\varphi)\Vert_{L^{\hat{q}}(\Omega)}\leq \psi(t)\mathcal{B}_2(\Vert\varphi\Vert_{X_\varphi}).
	\end{align}
	Then $F$ in \eqref{phys_term_linear} induces a well-defined Nemytskii operator $F: \mathcal{V}^N\times X_\varphi\to \mathcal{W}$ with $$[F((u_n)_{1\leq n\leq N}, \varphi)](t) = F(t,(u_n(t))_{1\leq n\leq N}, \varphi)$$ for $(u_n)_{1\leq n\leq N}\in \mathcal{V}^N, \varphi\in X_\varphi$ and $t\in (0,T)$.
\end{lemma}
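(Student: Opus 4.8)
The goal is to show that for every $(u_n)_{1\le n\le N}\in\mathcal{V}^N$ and $\varphi\in X_\varphi$ the map $t\mapsto F(t,(u_n(t))_n,\varphi)$ belongs to $\mathcal{W}=L^q(0,T;W)$ and agrees with the asserted Nemytskii identity. The plan is to split this into three tasks: (a) pointwise-in-time well-definedness together with a quantitative pointwise bound, (b) Bochner measurability of $t\mapsto F(t,(u_n(t))_n,\varphi)$ as a $W$-valued map, and (c) finiteness of the $\mathcal{W}$-norm. Task (a) is essentially the discussion preceding the lemma: for a.e.\ $t$ one has $u_n(t)\in V\hookrightarrow W^{\kappa,\hat p}(\Omega)$, hence $D^\beta u_n(t)\in W^{\kappa-|\beta|,\hat p}(\Omega)$, and either the Sobolev multiplication estimate \cite[Theorem 6.1]{behzadan2021} (when $s_\beta$ obeys the strict inequalities) or Hölder's inequality on $\Omega$ with exponents $\hat p$ and $\hat p\hat q/(\hat p-\hat q)$ (in the borderline case) gives $D^\beta u_n(t)\cdot\Phi_{n,\beta}(t,\varphi)\in L^{\hat q}(\Omega)\hookrightarrow W$; adding $\Psi(t,\varphi)\in L^{\hat q}(\Omega)\hookrightarrow W$ shows $F(t,(u_n(t))_n,\varphi)\in W$. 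Feeding the growth bounds \eqref{growth_cond_lin_terms} into these estimates yields, for a.e.\ $t$,
\[
\|F(t,(u_n(t))_n,\varphi)\|_W\le C\Big(\psi(t)\,\mathcal{B}_2(\|\varphi\|_{X_\varphi})+\phi(t)\,\mathcal{B}_1(\|\varphi\|_{X_\varphi})\sum_{n=1}^N\|u_n(t)\|_V\Big)
\]
with $C>0$ depending only on the embedding constants, $\kappa$ and $N$.

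For task (b), I would use that $u_n\in\mathcal{V}\subseteq L^p(0,T;V)$, so $t\mapsto u_n(t)$ is Bochner measurable into $V$, and that composing with the continuous linear operators $V\hookrightarrow W^{\kappa,\hat p}(\Omega)$ and $D^\beta$ makes $t\mapsto D^\beta u_n(t)$ Bochner measurable into the relevant target ($W^{\kappa-|\beta|,\hat p}(\Omega)$ or $L^{\hat p}(\Omega)$). The assumed measurability of $t\mapsto\Phi_{n,\beta}(t,\varphi)$ and $t\mapsto\Psi(t,\varphi)$, together with separability of $L^{s_\beta}(\Omega)$ and $L^{\hat q}(\Omega)$ for finite exponents and Pettis' theorem \cite[Theorem 1.34]{Roubíček2013}, upgrades these to Bochner measurability (the case $s_\beta=\infty$, i.e.\ $\hat p=\hat q$, being absorbed into the Hölder argument of (a)). I would then compose $t\mapsto(D^\beta u_n(t),\Phi_{n,\beta}(t,\varphi))$ with the continuous bilinear multiplication map into $L^{\hat q}(\Omega)$, sum over $n$ and $\beta$, add $\Psi(\cdot,\varphi)$, and invoke $L^{\hat q}(\Omega)\hookrightarrow W$ and separability of $W$ from Assumption \ref{ass_init_set}, i) to conclude that $t\mapsto F(t,(u_n(t))_n,\varphi)\in W$ is Bochner measurable.

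For task (c), I would take the $L^q(0,T)$-norm of the pointwise bound from (a). The contribution of $t\mapsto\psi(t)\mathcal{B}_2(\|\varphi\|_{X_\varphi})$ is finite because $\psi\in L^q(0,T)$. For each $n$, Hölder's inequality in time with the exponents $\tfrac{pq}{p-q}$ and $p$, which satisfy $\tfrac{p-q}{pq}+\tfrac1p=\tfrac1q$, gives
\[
\big\|\,\phi(\cdot)\,\|u_n(\cdot)\|_V\,\big\|_{L^q(0,T)}\le\|\phi\|_{L^{pq/(p-q)}(0,T)}\,\|u_n\|_{L^p(0,T;V)}<\infty,
\]
using $\phi\in L^{pq/(p-q)}(0,T)$ and $u_n\in\mathcal{V}\subseteq L^p(0,T;V)$; when $p=q$ one replaces this by a direct estimate with $\phi\in L^\infty(0,T)$. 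Hence $\|F((u_n)_n,\varphi)\|_{\mathcal{W}}<\infty$, which combined with (b) yields $F((u_n)_n,\varphi)\in\mathcal{W}$, and the identity $[F((u_n)_n,\varphi)](t)=F(t,(u_n(t))_n,\varphi)$ holds by construction. The step I expect to be the main obstacle is task (b): one must carefully propagate Bochner measurability through the (bi)linear operations and apply Pettis/separability at the right places, while keeping track of which admissible range of $s_\beta$ is in force so that the Sobolev multiplication estimate or Hölder's inequality on $\Omega$ is legitimate. Once the conjugacy relation $\tfrac1q=\tfrac{p-q}{pq}+\tfrac1p$ is spotted, task (c) is routine.
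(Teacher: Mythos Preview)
Your proposal is correct and follows essentially the same approach as the paper: split into Bochner measurability (via Pettis/separability and composition with continuous (bi)linear maps, which the paper handles by reference to the proof of Lemma~\ref{lem:nemytski_u_a}) and integrability (via the Sobolev multiplication estimate or H\"older on $\Omega$ for the pointwise bound, then H\"older in time with exponents $p$ and $pq/(p-q)$). The only cosmetic difference is that the paper works with $\|u_n(t)\|_{W^{\kappa,\hat p}(\Omega)}$ rather than $\|u_n(t)\|_V$ in the intermediate estimates, but this is immaterial given $V\hookrightarrow W^{\kappa,\hat p}(\Omega)$.
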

\begin{proof}
	Employing similar arguments as in the proof of Lemma \ref{lem:nemytski_u_a} together with measurability of $t\mapsto \Phi_n(t,\varphi)$ and $t\mapsto \Psi(t,\varphi)$ yields Bochner measurability of 
	\[
	(0,T)\ni t\mapsto  \Psi(t,\varphi(\cdot))+\sum_{n=1}^N \mathcal{J}_\omega u_n(t,\cdot)\cdot \Phi_n(t, \varphi(\cdot))\in W.
	\]
	Welldefinedness follows by the following chain of estimations for $u=(u_n)_{1\leq n\leq N}\in \mathcal{V}^N$ and $\varphi\in X_\varphi$ for some generic constant $c>0$. By the embedding $L^{\hat{q}}(\Omega)\hookrightarrow W$ it holds $\Vert F(u,\varphi)\Vert_{\mathcal{W}}\leq c\Vert F(u,\varphi)\Vert_{L^q(0,T;L^{\hat{q}}(\Omega))}$ which by the definition of $F$ and the triangle inequality can be estimated by
	\begin{align*}
		c \left(\sum_{n=1}^N\left(\int_0^T\Vert \mathcal{J}_\omega u_n(t,\cdot)\cdot \Phi_n(t,\varphi(\cdot))\Vert_{L^{\hat{q}}(\Omega)}^q\dx t\right)^{1/q}+\left(\int_0^T \Vert\Psi(t,\varphi(\cdot)) \Vert_{L^{\hat{q}}(\Omega)}^q\dx t\right)^{1/q}\right).
	\end{align*}
	Due to the growth condition in \eqref{growth_cond_lin_terms} we may estimate the term
	\[
	\left(\int_0^T \Vert\Psi(t,\varphi(\cdot)) \Vert_{L^{\hat{q}}(\Omega)}^q\dx t\right)^{1/q}\leq \mathcal{B}_2(\Vert \varphi\Vert_{X_\varphi})\Vert \psi\Vert_{L^q(0,T)}<\infty.
	\]
	For the remaining part note that by \cite[Theorem 6.1]{behzadan2021}, \cite[Corollary 6.3]{behzadan2021} and  the choice of $s_\beta$ it holds true that the pointwise multiplication of functions is a continuous bilinear map
	\[
	W^{\omega-\vert\beta\vert,\hat{p}}(\Omega)\times L^{s_\beta}(\Omega)\to L^{\hat{q}}(\Omega).
	\]
	Thus, there exists some generic constant $c>0$ independent of $u_n, t, \varphi, \Phi_n$ with
	\begin{multline*}
		\Vert \mathcal{J}_\omega u_n(t,\cdot)\cdot \Phi_n(t,\varphi(\cdot))\Vert_{L^{\hat{q}}(\Omega)}\leq c \sum_{0\leq \vert\beta\vert\leq \omega}\Vert D^\beta u_n(t,\cdot)\Vert_{W^{\omega-\vert\beta\vert,\hat{p}}(\Omega)}\Vert \Phi_{n,\beta}(t,\varphi(\cdot))\Vert_{L^{s_\beta}(\Omega)}.
	\end{multline*}
	We employ \eqref{growth_cond_lin_terms} together with Hölder's inequality to obtain 
	\[
	\left(\int_0^T\Vert \mathcal{J}_\omega u_n(t,\cdot)\cdot \Phi_n(t,\varphi(\cdot))\Vert_{L^{\hat{q}}(\Omega)}^q\dx t\right)^{1/q}\leq c\mathcal{B}_1(\Vert\varphi\Vert_{X_\varphi}) \left(\int_0^T \Vert u_n\Vert_{W^{\omega,\hat{p}}(\Omega)}^q\phi(t)^q\dx t\right)^{1/q}.
	\]
	Using Hölder's inequality once more and $\mathcal{V}\hookrightarrow L^p(0,T; W^{\omega,\hat{p}}(\Omega))$ yields that 
	\[
	\left(\int_0^T \Vert u_n\Vert_{W^{\omega,\hat{p}}(\Omega)}^q\phi(t)^q\dx t\right)^{1/q}\leq c\Vert u_n\Vert_{L^p(0,T; W^{\omega,\hat{p}}(\Omega))}\Vert \phi\Vert_{L^{\frac{pq}{p-q}}(0,T)}\leq c\Vert u_n\Vert_{\mathcal{V}}\Vert \phi\Vert_{L^{\frac{pq}{p-q}}(0,T)}
	\]
	which is again finite by assumption. The case $s_\beta = \frac{\hat{p}\hat{q}}{\hat{p}-\hat{q}}$ can be covered similarly using $V\hookrightarrow W^{\omega, \hat{p}}(\Omega)$ and employing Hölder's inequality. Finally, we derive that $\Vert F(u,\varphi)\Vert_{\mathcal{W}}<\infty$ which concludes the assertions of the lemma.	
\end{proof}
The next result addresses the remaining part of Proposition \ref{prop:ass4_linear} on continuity.
\begin{lemma}
	\label{lem:phys_term_weak}
	Let the assumptions of Lemma \ref{lem:phys_lin_nemyt} hold true. Suppose that $\Psi(t,\cdot):X_\varphi\to L^{\hat{q}}(\Omega)$ is weakly continuous for almost every $t\in (0,T)$. Let $s_\beta$ be given as in Lemma \ref{lem:phys_lin_nemyt}, additionally with strict inequality $\frac{\omega-\vert\beta\vert}{d}>\frac{1}{\hat{p}}-\frac{1}{\hat{q}}+\frac{1}{s_\beta}$ if $\hat{q}=1$ or $s_\beta = \frac{\hat{p}\hat{q}}{\hat{p}-\hat{q}}$. Assume that $\Phi_{n,\beta}(t,\cdot):X_\varphi\to L^{s_\beta}(\Omega)$ is weakly continuous for a.e. $t\in (0,T)$. Furthermore, suppose that either $\omega\leq \kappa$ or otherwise in case $\omega>\kappa$ the following additional conditions hold: 
	\begin{itemize}
		\item For each $0\leq \vert\beta\vert<\omega$ assume that there exists some $\hat{q}\leq c_\beta\leq \infty$ such that $W^{\omega-\vert\beta\vert,\hat{p}}(\Omega)\hookdoubleheadrightarrow L^{c_\beta}(\Omega)$ and that we have the additional growth condition
		\[
		\Vert \Phi_{n,\beta}(t,\varphi)\Vert_{\frac{c_\beta\hat{q}}{c_\beta-\hat{q}}}\leq \phi(t)\mathcal{B}_1(\Vert \varphi\Vert_{X_\varphi}).
		\]
		\item For $\vert\beta\vert=\omega$ assume that $\Phi_{n,\beta}(t,\cdot): X_\varphi\to L^{\frac{\hat{p}\hat{q}}{\hat{p}-\hat{q}}}(\Omega)$ is well-defined and weak-strong continuous for a.e. $t\in (0,T)$.
	\end{itemize}
	Then $\mathcal{V}^N\times X_\varphi \ni (u,\varphi)\mapsto F(u,\varphi)\in \mathcal{W}$ induced by \eqref{phys_term_linear} is weak-weak continuous.
\end{lemma}
\begin{proof}
	Let $(u^k)_k\subseteq \mathcal{V}^N, (\varphi^k)_k\subseteq X_\varphi$ and $u\in \mathcal{V}^N, \varphi\in X_\varphi$ with $u^k\rightharpoonup u$ in $\mathcal{V}^N$ and $\varphi^k\rightharpoonup \varphi$ in $X_\varphi$ as $k\to \infty$. We verify that $F(u^k,\varphi^k)\rightharpoonup F(u,\varphi)$ in $\mathcal{W}$ as $k\to \infty$. First, by $L^{\hat{q}}(\Omega)\hookrightarrow W$ and the growth condition in \eqref{growth_cond_lin_terms} it holds true for $w^*\in \mathcal{W}^*$ and a.e. $t\in[0,T]$ that 
	\begin{align*}
		\langle \Psi(t,\varphi^k)-\Psi(t,\varphi), w^*(t)\rangle_{W,W^*}&\leq c(\Vert \Psi(t,\varphi^k)\Vert_{L^{\hat{q}}(\Omega)}+\Vert \Psi(t,\varphi)\Vert_{L^{\hat{q}}(\Omega)})\Vert w^*(t)\Vert_{W^*}\\
		& \leq c(\mathcal{B}_2(\Vert \varphi^k\Vert_{X_\varphi})+\mathcal{B}_2(\Vert\varphi\Vert_{X_\varphi}))\psi(t)\Vert w^*(t)\Vert_{W^*}.
	\end{align*}
	By $\varphi^k\rightharpoonup \varphi$ in $X_\varphi$ the $\Vert \varphi^k\Vert_{X_\varphi}$ are uniformly bounded for all $k$. Thus, as $\mathcal{B}_2$ maps bounded sets to bounded sets there exists some $\tilde{c}$ such that $\mathcal{B}_2(\Vert \varphi^k\Vert_{X_\varphi})+\mathcal{B}_2(\Vert\varphi\Vert_{X_\varphi})\leq \tilde{c}$ for all $k$ and we derive that $\langle \Psi(t,\varphi^k)-\Psi(t,\varphi), w^*(t)\rangle_{W,W^*}$ is majorized by the integrable function $t\mapsto\tilde{c}\psi(t)\Vert w^*(t)\Vert_{W^*}$ independently of $k$ with  
	\[
	\int_0^T\langle \Psi(t,\varphi^k)-\Psi(t,\varphi), w^*(t)\rangle_{W,W^*}\dx t\leq \tilde{c}\Vert \psi\Vert_{L^q(0,T)}\Vert w^*\Vert_{\mathcal{W}^*}<\infty
	\]
	by Hölder's inequality. The Dominated Convergence Theorem and weak-weak continuity of $\Psi$ for a.e. $t\in(0,T)$ yield that $\langle \Psi(\cdot,\varphi^k)-\Psi(\cdot,\varphi),w^*\rangle_{\mathcal{W},\mathcal{W}^*}\to 0$ as $k\to \infty$ and hence, that $\Psi(\cdot, \varphi^k)\rightharpoonup \Psi(\cdot, \varphi)$ in $\mathcal{W}$. Thus, by \eqref{phys_term_linear} it remains to show that
	\begin{align}
		\label{claim:weak_conv_lin_part}
		\mathcal{J}_\omega u^k_n\cdot \Phi_n(\cdot,\varphi^k)\rightharpoonup\mathcal{J}_\omega u_n\cdot\Phi_n(\cdot,\varphi)\qquad \text{for} ~ 1\leq n\leq N
	\end{align}
	in $L^q(0,T;L^{\hat{q}}(\Omega))$ as $k\to \infty$ which is sufficient due to $L^q(0,T;L^{\hat{q}}(\Omega))\hookrightarrow \mathcal{W}$. Since
	\[
		\mathcal{J}_\omega u^k_n\cdot \Phi_n(\cdot,\varphi^k)-\mathcal{J}_\omega u_n\cdot\Phi_n(\cdot,\varphi) =\sum_{0\leq \vert\beta\vert\leq \omega} \left(D^\beta u_n^k\cdot\Phi_{n,\beta}(\cdot,\varphi^k)-D^\beta u_n\cdot\Phi_{n,\beta}(\cdot,\varphi)\right)
	\]
	by \eqref{phys_term_linear} it suffices to prove that for any fixed $0\leq\vert\beta\vert\leq \omega$ and $1\leq n\leq N$
	\begin{align}
		\label{reduced_weak_conv}
		D^\beta u_n^k\cdot\Phi_{n,\beta}(\cdot,\varphi^k)\rightharpoonup D^\beta u_n\cdot\Phi_{n,\beta}(\cdot,\varphi)
	\end{align}
	in $L^q(0,T;L^{\hat{q}}(\Omega))$ as $k\to \infty$. We show first that
	\begin{align}
		\label{reduced_weak_conv1}
		D^\beta u_n\cdot\left(\Phi_{n,\beta}(\cdot,\varphi^k)-\Phi_{n,\beta}(\cdot,\varphi)\right)\rightharpoonup 0
	\end{align}
	in $L^q(0,T;L^{\hat{q}}(\Omega))$ as $k\to \infty$ and then
	\begin{align}
		\label{reduced_weak_conv2}
		\left(D^\beta u_n^k-D^\beta u_n\right)\cdot\Phi_{n,\beta}(\cdot,\varphi^k)\rightharpoonup 0
	\end{align}
	in $L^q(0,T;L^{\hat{q}}(\Omega))$ as $k\to \infty$, proving the weak convergence in \eqref{reduced_weak_conv}. For that, let $w^*\in L^{q^*}(0,T;L^{\hat{q}^*}(\Omega))$. Then for a.e. $t\in(0,T)$ it holds that $D^\beta u_n(t)\in W^{\omega-\vert\beta\vert,\hat{p}}(\Omega)$ (with $W^{0,\hat{p}}(\Omega)= L^{\hat{p}}(\Omega)$) and $w^*(t)\in L^{\hat{q}^*}(\Omega)$. By \cite[Theorem 6.1, Corollary 6.3]{behzadan2021} the inclusion $D^\beta u_n(t)w^*(t)\in L^{r_\beta}(\Omega)$ holds true with $\frac{\hat{p}\hat{q}}{\hat{q}-\hat{p}+\hat{p}\hat{q}}\leq r_\beta\leq \hat{q}^*$ and $r_\beta^{-1}\geq \frac{1}{\hat{p}}+\frac{1}{\hat{q}^*}-\frac{\omega-\vert \beta\vert}{d}$ (with strict inequality if $\hat{q}=1$). In particular by the requirements on $s_\beta$ we may choose $r_\beta = s_\beta^*$ (or equivalently $r_\beta^*=s_\beta$). Using that $D^\beta u_n\in L^p(0,T;W^{\omega-\vert\beta\vert,\hat{p}}(\Omega))$ and $w^*\in L^{q^*}(0,T;L^{\hat{q}^*}(\Omega))$ we derive
	\[
	D^\beta u_n w^* \in L^{\frac{pq^*}{p+q^*}}(0,T; L^{r_\beta}(\Omega)).
	\]
	Thus, we obtain by the growth condition \eqref{growth_cond_lin_terms} that for $w^*\in L^{q^*}(0,T;L^{\hat{q}^*}(\Omega))$ 
	\begin{align*}
		\langle \Phi_{n,\beta}(t, \varphi^k)-\Phi_{n,\beta}(t,\varphi), &D^\beta u_n(t) w^*(t)\rangle_{L^{r_\beta^*}(\Omega), L^{r_\beta}(\Omega)}\\
		&\leq  \Vert \Phi_{n,\beta}(t, \varphi^k)-\Phi_{n,\beta}(t,\varphi)\Vert_{L^{r_\beta^*}(\Omega)}\Vert D^\beta u_n(t)w^*(t)\Vert_{L^{r_\beta}(\Omega)}\\
		&\leq c\phi(t)\Vert D^\beta u_n(t)w^*(t)\Vert_{L^{r_\beta}(\Omega)}
	\end{align*}
	for $0\leq \vert \beta\vert\leq \omega$ and a.e. $t\in[0,T]$. Hence, independently of $k$, the term $$\langle \Phi_{n,\beta}(t, \varphi^k)-\Phi_{n,\beta}(t,\varphi), D^\beta u_n(t) w^*(t)\rangle_{L^{r_\beta^*}(\Omega), L^{r_\beta}(\Omega)}$$ is majorized by the integrable function $t\mapsto c\phi(t)\Vert D^\beta u_n(t)w^*(t)\Vert_{L^{r_\beta}(\Omega)}$  with 
	\begin{multline*}
		\int_0^T \langle \Phi_{n,\beta}(t, \varphi^k)-\Phi_{n,\beta}(t,\varphi), D^\beta u_n(t) w^*(t)\rangle_{L^{r_\beta^*}(\Omega), L^{r_\beta}(\Omega)}\dx t\\ \leq c \Vert\phi\Vert_{L^{\frac{pq}{p-q}}(\Omega)}\Vert D^\beta u_n w^* \Vert_{L^{\frac{pq^*}{p+q^*}}(0,T; L^{r_\beta}(\Omega))}<\infty
	\end{multline*}
	as $(\frac{pq^*}{p+q^*})^*=\frac{pq}{p-q}$. Employing dominated convergence together with weak continuity of $\Phi_{n,\beta}(t,\cdot): X_\varphi\to L^{r_\beta^*}(\Omega)=L^{s_\beta}(\Omega)$ for a.e. $t\in (0,T)$ concludes \eqref{reduced_weak_conv1}. The case that $s_\beta = \frac{\hat{p}\hat{q}}{\hat{p}-\hat{q}}$ can be similarly dealt with as before using that $D^\beta u_n(t) w^*(t)\in L^{(\frac{\hat{p}\hat{q}}{\hat{p}-\hat{q}})^*}(\Omega)$ for $w^*(t)\in L^{\hat{q}^*}(\Omega)$ by Hölder's generalized inequality. Next we prove the weak convergence in \eqref{reduced_weak_conv2} which follows if we can show for $w^*\in L^{q^*}(0,T;L^{\hat{q}^*}(\Omega))$ that
	\begin{align}
		\label{remaining_integral}
		\int_0^T\langle \left(D^\beta u_n^k(t)-D^\beta u_n(t)\right)\cdot\Phi_{n,\beta}(t,\varphi^k), w^*(t)\rangle_{L^{\hat{q}}(\Omega),L^{\hat{q}^*}(\Omega)}\dx t \to 0
	\end{align}
	as $k\to \infty$. In fact due to Hölder's inequality, the growth condition in \eqref{growth_cond_lin_terms} and similar arguments regarding the multiplication operator as in Lemma \ref{lem:phys_lin_nemyt} we obtain that the integrand of \eqref{remaining_integral} can be bounded from above by
	\begin{align}
		\label{est:first_term}
		c\phi(t)\mathcal{B}_1(\Vert \varphi^k\Vert_{X_\varphi})\Vert u_n^k(t)- u_n(t)\Vert_{W^{\omega,\hat{p}}(\Omega)}\Vert w^*(t)\Vert_{L^{\hat{q}^*}(\Omega)}
	\end{align}
	for some generic constant $c>0$ for a.e. $t\in (0,T)$. Using that $\mathcal{V}\hookrightarrow \mathcal{C}(0,T; W^{\omega,\hat{p}}(\Omega))$ by \cite[Lemma 7.1]{Roubíček2013} and the assumption $\tilde{V}\hookrightarrow W^{\omega, \hat{p}}(\Omega)$, we derive that $u_n^k\rightharpoonup u_n$ in $\mathcal{C}(0,T; W^{\omega,\hat{p}}(\Omega))$ as $k\to \infty$ which by boundedness of weakly convergent sequences in Banach spaces (see \cite[Proposition 3.5 (iii)]{Brezis2010}) implies that $\Vert u_n^k(t)- u_n(t)\Vert_{W^{\omega,\hat{p}}(\Omega)}$ is bounded independently of $t\in (0,T)$ and $k\in\mathbb{N}$. Using uniform boundedness of $\Vert\varphi^k\Vert_{X_\varphi}$ for all $k\in \mathbb{N}$ implies that \eqref{est:first_term} and hence the integrand of \eqref{remaining_integral} is majorized by the function $t\mapsto c\phi(t)\Vert w^*(t)\Vert_{L^{\hat{q}^*}(\Omega)}$ which is integrable since
	\[
		\int_0^T\phi(t)\Vert w^*(t)\Vert_{L^{\hat{q}^*}(\Omega)}\dx t \leq T^{1/p}\Vert\phi\Vert_{L^{\frac{pq}{p-q}}(\Omega)}\Vert w^*\Vert_{L^{q^*}(0,T;L^{\hat{q}^*}(\Omega))}
	\]
	by Hölder's inequality. This can be similarly shown to hold true in case $s_\beta = \frac{\hat{p}\hat{q}}{\hat{p}-\hat{q}}$ using Hölder's generalized inequality. We show that the integrand of \eqref{remaining_integral} converges to zero pointwise for a.e. $t\in (0,T)$ under the following case distinction.
	
	If $\omega \leq \kappa$ it follows by $V\hookdoubleheadrightarrow W^{\kappa,\hat{p}}(\Omega)\hookrightarrow W^{\omega,\hat{p}}(\Omega)$ (due to Assumption \ref{ass_init_set}, ii)) that \eqref{est:first_term} converges to zero as $k\to \infty$ since the term depending on $\varphi^k$ is bounded independently of $k\in\mathbb{N}$ by weak convergence and $u^k_n(t)\rightharpoonup u_n(t)$ in $V$ for a.e. $t\in(0,T)$ as $k\to \infty$. Otherwise it holds $\omega>\kappa$ and in case $0\leq \vert\beta\vert<\omega$ we can similarly estimate the integrand of \eqref{remaining_integral} using the assumptions of the lemma by
	\[
		c\phi(t)\mathcal{B}_1(\Vert\varphi^k\Vert_{X_\varphi})\Vert D^\beta u_n^k(t)-D^\beta u_n(t)\Vert_{L^{c_\beta}(\Omega)}\Vert w^*(t)\Vert_{L^{\hat{q}^*}(\Omega)}
	\]
	which converges to zero due to $D^\beta u_n^k(t)\rightharpoonup D^\beta u_n(t)$ in $W^{\omega-\vert\beta\vert,\hat{p}}(\Omega)\hookdoubleheadrightarrow L^{c_\beta}(\Omega)$ as $k\to \infty$. It remains to consider the case $\vert \beta\vert=\omega$. Since $D^\beta u_n^k(t)\rightharpoonup D^\beta u_n(t)$ in $L^{\hat{p}}(\Omega)$ as $k\to \infty$ and $\Phi_{n,\beta}(t,\varphi^k)\cdot w^*(t)\to \Phi_{n,\beta}(t,\varphi)\cdot w^*(t)$ in $L^{\hat{p}^*}(\Omega)$ by Hölder's inequality and weak-strong continuity of $\Phi_{n,\beta}(t,\cdot):X_\varphi\to L^{\frac{\hat{p}\hat{q}}{\hat{p}-\hat{q}}}(\Omega)$, the integrand of \eqref{remaining_integral} converges to zero also in this case. Hence, also \eqref{reduced_weak_conv2} holds true.
	
	 Thus, we recover \eqref{reduced_weak_conv} and consequently \eqref{claim:weak_conv_lin_part}. Finally, this implies that $F(u^k,\varphi^k)\rightharpoonup F(u,\varphi)$ in $\mathcal{W}$ concluding weak continuity as stated in the assertion of the lemma.
\end{proof}
Combining Lemma \ref{lem:phys_lin_nemyt} and Lemma \ref{lem:phys_term_weak} concludes the result in Proposition \ref{prop:ass4_linear}.
\begin{remark}
	In case $\omega\leq \kappa$ the assumption that $\tilde{V}\hookrightarrow W^{\omega,\hat{p}}(\Omega)$ can be avoided and only $V\hookrightarrow W^{\omega,\hat{p}}(\Omega)$ is necessary, using the compact embedding of the extended state space $\mathcal{V}\hookdoubleheadrightarrow L^p(0,T;W^{\kappa,\hat{p}}(\Omega))$ discussed at the end of the proof of Theorem \ref{thm:uniqueness}. We further note that the additional assumptions in the previous lemma are necessary to guarantee the convergence of \eqref{remaining_integral} to zero. Here it is not sufficient to only have weak convergence of the terms depending on the state and the parameters, respectively. The reason is that continuous bilinear operators are not jointly weakly continuous in general. The latter holds under the Dunford-Pettis property which reflexive spaces only attain in finite dimensions.
\end{remark}
\subsection{Nonlinear case}
\label{app:nonlinear}
We prove Proposition \ref{prop:ass4_nonlinear}, starting with the first part on Assumption \ref{ass_phys_term}, i), the induction of well-defined Nemytskii operators.
\begin{lemma}
	\label{lem:Fibochner}
	Let Assumption \ref{ass_init_set} and the extended state space embedding
	\[
	\mathcal{V}\hookrightarrow\mathcal{C}(0,T;H)
	\]
	hold true. Suppose that the $F_n(\cdot, \cdot, \varphi):(0,T)\times V^N\to W$ satisfy the Carathéodory condition, i.e., $t\mapsto F_n(t,v, \varphi)$ is measurable for $v\in V^N$ and $v\mapsto F_n(t,v,\varphi)$ is continuous for a.e. $t\in(0,T)$. Further assume that the $F_n$ satisfy the growth condition
	\begin{align}
		\label{growth_condition}
		\Vert F_n(t,(v_n)_{1\leq n\leq N}, \varphi)\Vert_W \leq \mathcal{B}_0(\Vert\varphi\Vert_{X_\varphi}, \sum_{n=1}^N\Vert v_n\Vert_H)(\Gamma(t)+\sum_{n=1}^N\Vert v_n \Vert_{V})
	\end{align}
	for some $\Gamma\in L^q(0,T)$ and $\mathcal{B}_0:\mathbb{R}^2\to \mathbb{R}$, increasing in the second entry and, for fixed second entry, mapping bounded sets to bounded sets. Then the $F_n:(0,T)\times V^N\times X_\varphi\to W$ induce well-defined Nemytskii operators $F_n: \mathcal{V}^N\times X_\varphi\to \mathcal{W}$ with
	\begin{align}
		\label{nemytskii2}
	[F_n(v,\varphi)](t) = F_n(t, v(t), \varphi)
	\end{align}
	for $v\in \mathcal{V}^N$ and $\varphi\in X_\varphi$.
\end{lemma}
\begin{proof}
	The Carathéodory assumption ensures Bochner measurability of the map $t\mapsto F_n(t,v(t),\varphi)$ for $v\in \mathcal{V}^N$ and $\varphi\in X_\varphi$. Growth condition \eqref{growth_condition} and Hölder's inequality imply that for $v\in \mathcal{V}^N$ and $\varphi\in X_\varphi$ the term $\int_0^T\Vert F_n(t,v(t),\varphi)\Vert_W^q\dx t$ can be bounded, for $C>0$ some in the following generically used constant, by
	\begin{align*}
		C\int_0^T \mathcal{B}_0(\Vert\varphi\Vert_{X_\varphi}, \sum_{n=1}^N\Vert v_n(t)\Vert_H)^q(\vert\Gamma(t)\vert^q+\sum_{n=1}^N\Vert v_n(t)\Vert_{V}^q)\dx t
	\end{align*}
	which may be further estimated by
	\begin{align}
		\label{Fibochner}
		C\mathcal{B}_0(\Vert\varphi\Vert_{X_\varphi}, \sum_{n=1}^N\Vert v_n\Vert_{\mathcal{C}(0,T;H)})^q(\Vert\Gamma\Vert_{L^q(0,T)}^q+\sum_{n=1}^N\int_0^T \Vert v_n(t)\Vert_{V}^q\dx t).
	\end{align}
	Monotonicity of $\mathcal{B}_0$ in its second entry, $v_n\in \mathcal{V}\hookrightarrow \mathcal{C}(0,T;H)$, $\Gamma\in L^q(0,T)$ and
	\begin{align}
		\notag\int_0^T \Vert v_n(t)\Vert_{V}^q\dx t\leq T^{\frac{p-q}{p}}\Vert v_n\Vert^q_{L^p(0,T;V)}\leq T^{\frac{p-q}{p}}\Vert v_n\Vert^q_\mathcal{V}<\infty
	\end{align}
	yield that \eqref{Fibochner} is finite. As a consequence, we derive that $\int_0^T \Vert F_n(t, v(t), \varphi)\Vert_W^q\dx t<\infty$ and thus, that $\Vert F_n(v,\varphi)\Vert_\mathcal{W}<\infty$ which together with separability of $W$ implies Bochner integrability of $t\mapsto F_n(t,v(t),\varphi)$ and well-definedness of the Nemytskii operator $F_n: \mathcal{V}^N\times X_\varphi\to \mathcal{W}$ concluding the assertions of the lemma.
\end{proof}
The next result addresses the remaining part of Proposition \ref{prop:ass4_nonlinear}. The proof is essentially based on \cite[Lemma 5]{AHN23}, for which the requirements of Lemma \ref{lem:Fibochner} are extended by a stronger growth condition. 
\begin{lemma}
	\label{lem:weak_cont_Fi}
	Let Assumption \ref{ass_init_set} and the extended state space embedding 
	\[
	\mathcal{V}\hookrightarrow\mathcal{C}(0,T;H)
	\]
	hold true. Suppose that the $F_n(\cdot, \cdot, \varphi):(0,T)\times V^N\to W$ fulfill the Carathéodory condition as in Lemma \ref{lem:Fibochner} and weak-weak continuity of
	\begin{align*}
		F_n(t,\cdot): H^N\times X_\varphi&\to W\\
		(v_1,\dots, v_N,\varphi)&\mapsto F_n(t,v_1,\dots, v_N,\varphi)
	\end{align*}
	for a.e. $t\in(0,T)$. Further assume that the $F_n$ satisfy the stricter growth condition
	\begin{align}
		\label{growth_condition_strict}
		\Vert F_n(t,(v_n)_{1\leq n\leq N}, \varphi)\Vert_W \leq \mathcal{B}_0(\Vert\varphi\Vert_{X_\varphi}, \sum_{n=1}^N\Vert v_n\Vert_H)(\Gamma(t)+\sum_{n=1}^N\Vert v_n \Vert_{H})
	\end{align}
	for some $\Gamma\in L^q(0,T)$ and $\mathcal{B}_0:\mathbb{R}^2\to \mathbb{R}$, increasing in the second entry and, for fixed second entry, mapping bounded sets to bounded sets. Then the Nemytskii operator in \eqref{nemytskii2} is weak-weak continuous.
\end{lemma}
\begin{proof}
	First note that, for $(u_n)_n\in \mathcal{V}^N, \psi\in X_\varphi$ and $t\in(0,T)$, the growth condition \eqref{growth_condition_strict} together with 
	$\mathcal{V}\hookrightarrow \mathcal{C}(0,T;H)$ and monotonicity of $\mathcal{B}_0$ yields
	\begin{align}
		\label{Fi_bound1}
		\Vert F_n(u_1, \dots, u_N, \psi)(t)\Vert_W\leq \mathcal{B}_0(\Vert \psi\Vert_{X_\varphi}, \sum_{n=1}^N\Vert u_n\Vert_{\mathcal{C}(0,T;H)})(\Gamma(t)+\sum_{n=1}^{N}\Vert u_n(t)\Vert_H).
	\end{align}
	Now let $(v,\varphi)\in \mathcal{V}^N\times X_\varphi$ and $(v^m)_m\subseteq \mathcal{V}^N, (\varphi^m)_m\subseteq X_\varphi$ with $v^m\rightharpoonup v$ in $\mathcal{V}^N$ and $\varphi^m\rightharpoonup \varphi$ in $X_\varphi$. We show
	\begin{align}
		\label{F_weak_convergence}
		F_n(v_1^m, \dots, v_N^m, \varphi^m)\rightharpoonup F_n(v_1, \dots, v_N, \varphi) ~ ~ \text{ in } ~ \mathcal{W}.
	\end{align}
	Boundedness of weakly convergent sequences (see e.g. \cite[Proposition 3.5 (iii)]{Brezis2010}) and $\mathcal{V}\hookrightarrow\mathcal{C}(0,T;H)$ together with the assumptions on $\mathcal{B}_0$ ensure the existence of $c_\varphi, c_v>0$ such that both $\mathcal{B}_0(\Vert \varphi\Vert_{X_\varphi}, \sum_{n=1}^N\Vert v_n\Vert_{\mathcal{C}(0,T;H)})\leq \mathcal{B}_0(c_\varphi, c_v)$ and
	\begin{equation}
		\label{B_bound}
		\sup_{m\in\mathbb{N}}\mathcal{B}_0(\Vert \varphi^m\Vert_{X_\varphi}, \sum_{n=1}^N\Vert v_n^m\Vert_{\mathcal{C}(0,T;H)})\leq \mathcal{B}_0(c_\varphi, c_v)
	\end{equation}
	hold true. Fixing $w^*\in \mathcal{W}^*$ and using \eqref{Fi_bound1} and \eqref{B_bound} it follows for a.e. $t\in [0,T]$ that
	\begin{align}
		\label{estim_chain_Fi_weak_cont}
		\notag\langle F_n(v_1^m, &\dots, v_N^m, \varphi^m)(t)-F_n(v_1, \dots, v_N, \varphi)(t), w^*(t)\rangle_{W, W^*}\\
		\notag&\leq (\Vert F_n(v_1^m, \dots, v_N^m, \varphi^m)(t)\Vert_W+\Vert F_n(v_1, \dots, v_N, \varphi)(t)\Vert_W) \Vert w^*(t)\Vert_{W^*}\\
		\notag&\leq \mathcal{B}_0(c_\varphi, c_v)(\Gamma(t)+\sum_{n=1}^N\Vert v_n(t)\Vert_H+\sum_{n=1}^N\Vert v_n^m(t)\Vert_H)\Vert w^*(t)\Vert_{W^*}\\
		\notag&\leq \mathcal{B}_0(c_\varphi, c_v)(\vert\Gamma(t)\vert+2c_v)\Vert w^*(t)\Vert_{W^*}.
	\end{align}
	As a consequence, for $c\geq \mathcal{B}(c_\varphi, c_v)$ the function 
	\[
	t\mapsto \langle F_n(v_1^m, \dots, v_N^m, \varphi^m)(t)-F_n(v_1, \dots, v_N, \varphi)(t), w^*(t)\rangle_{W, W^*}
	\]
	is majorized by the integrable function $t\mapsto c(\vert\Gamma(t)\vert+2c_v)\Vert w^*(t)\Vert_{W^*}$ with
	\begin{multline*}
		\langle F_n(v_1^m, \dots, v_N^m, \varphi^m)-F_n(v_1, \dots, v_N, \varphi), w^*\rangle_{\mathcal{W}, \mathcal{W}^*}\\
		\leq c\int_0^T(\vert\Gamma(t)\vert+2c_v)\Vert w^*(t)\Vert_{W^*}\dx t \leq c(\Vert\Gamma\Vert_{L^q(0,T)}+2c_v T^{1/q})\Vert w^*\Vert_{\mathcal{W}^*}<\infty
	\end{multline*}
	as $p\geq q$. Thus, once we argue weak convergence 
	\begin{align}
		\label{F_V_X_weak_continuity}
		F_n(t,v_1^m(t), \dots, v_N^m(t),\varphi^m)\rightharpoonup F_n(t,v_1(t), \dots, v_N(t),\varphi)
	\end{align}
	in $W$ for a.e. $t\in (0,T)$, weak convergence in \eqref{F_weak_convergence} follows by the Dominated Convergence Theorem. For the former, note that by $\mathcal{V}\hookrightarrow\mathcal{C}(0,T;H)$ the pointwise evaluation map realizing $u(t)\in H$ for $u\in \mathcal{V}$ is weakly closed due to 
	\[
	\Vert u(t)\Vert_H\leq \Vert u\Vert_{\mathcal{C}(0,T;H)}\leq c\Vert u\Vert_{\mathcal{V}}
	\]
	for $t\in (0,T)$. By $v^m\rightharpoonup v$ in $\mathcal{C}(0,T;H)^N$ it holds true that $(\Vert v^m(t)\Vert_{H})_m$ is bounded for $t\in (0,T)$. Thus employing weak closedness of the evaluation map yields that every subsequence and hence, the whole sequence $v^m(t)$ converges weakly $v^m(t)\rightharpoonup v(t)$ in $H^N$. This together with weak-weak continuity of $F_n(t,\cdot): H^N\times X_\varphi\to W$ implies the convergence stated in \eqref{F_V_X_weak_continuity} and finally, the assertion of the lemma.
\end{proof}
Combining Lemma \ref{lem:Fibochner} and Lemma \ref{lem:weak_cont_Fi} concludes the result in Proposition \ref{prop:ass4_nonlinear}.
\section{Existence of minimizers}
\label{app:existence}
In this section we verify wellposedness of the minimization problem in \eqref{min_prob} under the Assumptions \ref{ass_init_set}, \ref{ass_param_app_class}, \ref{ass_phys_term}. As first step, we show that \eqref{min_prob} is indeed well-defined by proving that, for any $f_{\theta_n,n}\in\mathcal{F}_n^m$, the composed function $(t, u)\mapsto f_{\theta_n,n}(t, \mathcal{J}_\kappa u_1, \dots, \mathcal{J}_\kappa u_N)$ for $u\in V^N$ induces a well-defined Nemytskii operator on the dynamic space for $n=1,\dots, N$ and similarly the trace map $\gamma$. For that we consider first the differential operator introduced in \eqref{Jdifferential}.
\begin{lemma}
	\label{lem:Jbochner}
	Let Assumption \ref{ass_init_set} hold true. Then the function $\mathcal{J}_\kappa: W^{\kappa,\hat{p}}(\Omega)\to \otimes_{k=0}^\kappa L^{\hat{p}}(\Omega)^{p_k}$ induces a well-defined Nemytskii operator $\mathcal{J}_\kappa: L^p(0,T;W^{\kappa,\hat{p}}(\Omega))\to \otimes_{k=0}^\kappa L^p(0,T;L^{\hat{p}}(\Omega)^{p_k})$ with
	\[
	[\mathcal{J}_\kappa v](t) = \mathcal{J}_\kappa v(t)
	\]
	for $v\in L^p(0,T;W^{\kappa,\hat{p}}(\Omega))$. Furthermore, it is weak-weak continuous.
\end{lemma}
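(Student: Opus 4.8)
The plan is to recognise that, at the level of the static spaces, $\mathcal{J}_\kappa$ is simply a bounded \emph{linear} operator, and that linearity together with boundedness transfers to the Bochner setting and immediately forces weak--weak continuity.

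First I would record that $\mathcal{J}_\kappa\colon W^{\kappa,\hat p}(\Omega)\to\otimes_{k=0}^\kappa L^{\hat p}(\Omega)^{p_k}$ is well defined, linear and bounded. Indeed, by the very definition of $W^{\kappa,\hat p}(\Omega)$, every weak derivative $D^\beta v$ with $0\le|\beta|\le\kappa$ belongs to $L^{\hat p}(\Omega)$ and satisfies $\Vert D^\beta v\Vert_{L^{\hat p}(\Omega)}\le\Vert v\Vert_{W^{\kappa,\hat p}(\Omega)}$; for $|\beta|=0$ this is the continuous inclusion $W^{\kappa,\hat p}(\Omega)\hookrightarrow L^{\hat p}(\Omega)$, consistent with $p_0=1$ and $V_0=V$. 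Assembling the maps $J^k$ from \eqref{Jl_operator} over $0\le k\le\kappa$ then yields a constant $c_\kappa>0$ with $\Vert\mathcal{J}_\kappa v\Vert\le c_\kappa\Vert v\Vert_{W^{\kappa,\hat p}(\Omega)}$ for all $v$, the norm on the left being that of $\otimes_{k=0}^\kappa L^{\hat p}(\Omega)^{p_k}$.

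Next I would lift this to the dynamic spaces. Given $v\in L^p(0,T;W^{\kappa,\hat p}(\Omega))$, strong measurability of $v$ provides simple functions $s_j$ with $s_j(t)\to v(t)$ in $W^{\kappa,\hat p}(\Omega)$ for a.e.\ $t$; continuity of $\mathcal{J}_\kappa$ then shows that the simple functions $\mathcal{J}_\kappa s_j$ converge a.e.\ to $t\mapsto\mathcal{J}_\kappa v(t)$, so that the latter is Bochner measurable with values in the separable space $\otimes_{k=0}^\kappa L^{\hat p}(\Omega)^{p_k}$ (separability being inherited from Assumption \ref{ass_init_set}). The pointwise estimate from the first step gives $\Vert\mathcal{J}_\kappa v\Vert_{L^p(0,T;\otimes_{k=0}^\kappa L^{\hat p}(\Omega)^{p_k})}\le c_\kappa\Vert v\Vert_{L^p(0,T;W^{\kappa,\hat p}(\Omega))}<\infty$. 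Using that a finite direct sum commutes with the Bochner construction, i.e.\ $L^p(0,T;\otimes_{k=0}^\kappa L^{\hat p}(\Omega)^{p_k})\cong\otimes_{k=0}^\kappa L^p(0,T;L^{\hat p}(\Omega)^{p_k})$ isometrically, this yields a bounded linear Nemytskii operator $\mathcal{J}_\kappa\colon L^p(0,T;W^{\kappa,\hat p}(\Omega))\to\otimes_{k=0}^\kappa L^p(0,T;L^{\hat p}(\Omega)^{p_k})$ with $[\mathcal{J}_\kappa v](t)=\mathcal{J}_\kappa v(t)$.

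Finally, weak--weak continuity is automatic, since every bounded linear operator between Banach spaces is continuous for the respective weak topologies: if $v_j\rightharpoonup v$ in $L^p(0,T;W^{\kappa,\hat p}(\Omega))$ and $\ell$ lies in the dual of $\otimes_{k=0}^\kappa L^p(0,T;L^{\hat p}(\Omega)^{p_k})$, then $\ell\circ\mathcal{J}_\kappa$ is a bounded linear functional on $L^p(0,T;W^{\kappa,\hat p}(\Omega))$, whence $\ell(\mathcal{J}_\kappa v_j)\to\ell(\mathcal{J}_\kappa v)$, i.e.\ $\mathcal{J}_\kappa v_j\rightharpoonup\mathcal{J}_\kappa v$. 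There is no genuine obstacle in this argument; the only steps requiring (entirely routine) care are the Bochner measurability/integrability bookkeeping and the identification of the iterated Bochner space with the direct sum of Bochner spaces.
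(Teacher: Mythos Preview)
Your proof is correct and follows essentially the same approach as the paper: both recognise that $\mathcal{J}_\kappa$ (respectively its components $D^\beta$) is a bounded linear map between the static spaces, lift Bochner measurability via approximation by simple functions, obtain $L^p$-integrability from the pointwise norm estimate, and deduce weak--weak continuity directly from linearity plus boundedness. The only cosmetic difference is that the paper works component by component (first $D^\beta$, then $J^k$, then $\mathcal{J}_\kappa$) whereas you treat $\mathcal{J}_\kappa$ as a single bounded linear operator from the start and invoke the identification $L^p(0,T;\otimes_k X_k)\cong\otimes_k L^p(0,T;X_k)$ explicitly.
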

\begin{proof}
	We show first that for fixed $\beta \in \mathbb{N}_0^d$ with $0\leq k:=\vert\beta\vert \leq \kappa$ the differential operator $D^\beta: W^{\kappa,\hat{p}}(\Omega)\to L^{\hat{p}}(\Omega)$ induces a well-defined Nemytskii operator $D^\beta: L^p(0,T;W^{\kappa,\hat{p}}(\Omega))\to L^p(0,T;L^{\hat{p}}(\Omega))$ with $[D^\beta v](t)=D^\beta v(t)$ for $v\in L^p(0,T;W^{\kappa,\hat{p}}(\Omega))$. To that end let $v\in L^p(0,T;W^{\kappa,\hat{p}}(\Omega))$. By Assumption \ref{ass_init_set} we derive that $v(t,\cdot)\in W^{\kappa,\hat{p}}(\Omega)$ for a.e. $t\in(0,T)$. %
	Thus, it follows that 
	\begin{align}
		\label{differential_ineq_nemyt:1}
		\Vert D^\beta v(t,\cdot)\Vert_{L^{\hat{p}}(\Omega)}\leq \Vert v(t,\cdot)\Vert_{W^{\kappa,\hat{p}}(\Omega)}<\infty
	\end{align}
	for a.e. $t\in (0,T)$. As in particular $v\in L^1(0,T;W^{\kappa, \hat{p}}(\Omega))$ is Bochner measurable there exist temporal simple functions $v_k$ approximating $v$ pointwise a.e. in $(0,T)$ in the strong sense of $W^{\kappa, \hat{p}}(\Omega)$. Employing the embedding $W^{\kappa,\hat{p}}(\Omega)\hookrightarrow L^{\hat{p}}(\Omega)$ yields that the temporal simple functions $D^\beta v_k$ approximate $D^\beta v$ pointwise a.e. in $(0,T)$ in the strong sense of $L^{\hat{p}}(\Omega)$ and hence, Bochner measurability of
	\[
	(0,T)\ni t \mapsto D^\beta v(t, \cdot)\in L^{\hat{p}}(\Omega).
	\]
	Similar to \eqref{differential_ineq_nemyt:1}  well-definedness of the Nemytskii operator $D^\beta: L^p(0,T;W^{\kappa,\hat{p}}(\Omega))\to L^p(0,T; L^{\hat{p}}(\Omega))$ with $[D^\beta v](t)=D^\beta v(t)$ for $v\in L^p(0,T;W^{\kappa,\hat{p}}(\Omega))$ follows.%
	
	Weak-weak continuity of $D^\beta:L^p(0,T;W^{\kappa,\hat{p}}(\Omega))\to L^p(0,T; L^{\hat{p}}(\Omega))$ follows by boundedness and linearity where the latter follows immediately from linearity of the differential operator $D^\beta$. To see boundedness let $w\in L^{p^*}(0,T;L^{\hat{p}^*}(\Omega))$. Then by %
	\eqref{differential_ineq_nemyt:1} we derive for some $c>0$ that
	\begin{align*}
		\langle D^\beta v, w\rangle_{L^{p}(0,T;L^{\hat{p}}(\Omega)), L^{p^*}(0,T;L^{\hat{p}^*}(\Omega))} &= \int_0^T \langle D^\beta v(t), w(t)\rangle_{L^{\hat{p}}(\Omega), L^{\hat{p}^*}(\Omega)}\dx t\\
		&\leq c\int_0^T \Vert v(t)\Vert_{W^{\kappa,\hat{p}}(\Omega)} \Vert w(t)\Vert_{L^{\hat{p}^*}(\Omega)}\dx t\\
		&\leq c\Vert v\Vert_{L^p(0,T;W^{\kappa,\hat{p}}(\Omega))}\Vert w\Vert_{L^{p^*}(0,T;L^{\hat{p}^*}(\Omega))}
	\end{align*}
	proving that $\Vert D^\beta v\Vert_{L^{p}(0,T;L^{\hat{p}}(\Omega))}\leq c\Vert v\Vert_{L^p(0,T;W^{\kappa,\hat{p}}(\Omega))}$.\\
	
	As a consequence, for fixed $0\leq k\leq \kappa$ the function $J^k:W^{\kappa,\hat{p}}(\Omega)\to L^{\hat{p}}(\Omega)^{p_k}$ in \eqref{Jl_operator} induces a well-defined Nemytskii operator $J^k: L^p(0,T;W^{\kappa,\hat{p}}(\Omega))\to L^p(0,T;L^{\hat{p}}(\Omega)^{p_k})$ with $[J^kv](t) = J^kv(t)$ for $v\in L^p(0,T;W^{\kappa,\hat{p}}(\Omega))$ which is linear and bounded and thus, weak-weak continuous. This is straightforward as $J^k$ is the Cartesian product of finitely many functions which by the previous considerations induce well-defined Nemytskii operators sharing the property of weak-weak continuity, respectively. The same arguments yield the assertion of the lemma that $\mathcal{J}_\kappa$ induces a well-defined Nemytskii operator $\mathcal{J}_\kappa: L^p(0,T;W^{\kappa,\hat{p}}(\Omega))\to \otimes_{k=0}^\kappa L^p(0,T;L^{\hat{p}}(\Omega)^{p_k})$ which is weak-weak continuous.
\end{proof}
By minor adaptions of the previous proof it is straightforward to show that indeed also the Nemytskii operator $\mathcal{J}_\kappa: \mathcal{V}\to \otimes_{k=0}^\kappa\mathcal{V}_k^\times$ is well-defined. Employing Assumption \ref{ass_param_app_class}, i) we obtain that
$(t, u)\mapsto f_{\theta_n,n}(t, \mathcal{J}_\kappa u_1, \dots, \mathcal{J}_\kappa u_N)$ for $u\in V^N$ induces a well-defined Nemytskii operator with
\begin{align*}
	[f_{\theta_n,n}(\mathcal{J}_\kappa u_1,\dots, \mathcal{J}_\kappa u_N)](t)(x)= f_{\theta_n,n}(t, \mathcal{J}_\kappa u_1(t,x),\dots, \mathcal{J}_\kappa u_N(t,x))
\end{align*}
for $u\in \mathcal{V}^N$ and $t\in (0,T)$. On basis of the previous considerations we recover the following continuity result.

\begin{lemma}
	\label{lem:strong_weak_cont}
	In the setup of Assumption \ref{ass_init_set} and Assumption \ref{ass_param_app_class} it holds that
	\[
	\Theta_n^m\times \mathcal{V}^N\ni(\theta_n, u)\mapsto f_n(\theta_n, u)=: f_{\theta_n,n}(\mathcal{J}_\kappa u_1, \dots, \mathcal{J}_\kappa u_N)\in \mathcal{W}
	\]
	is weak-weak continuous for $n=1,\dots, N$.
\end{lemma}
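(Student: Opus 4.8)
The plan is to realize $f_n(\theta_n,\cdot)$ as a composition that splits the asserted weak-to-weak continuity into a compactness step followed by the strong–weak continuity already granted by Assumption \ref{ass_param_app_class}, ii). Let $(\theta_n^j,u^j)\rightharpoonup(\theta_n,u)$ weakly in $\Theta_n^m\times\mathcal{V}^N$. Since $\Theta_n^m$ is contained in a finite-dimensional space, $\theta_n^j\to\theta_n$ strongly, so only the state argument needs care. By the discussion preceding this lemma the composed Nemytskii operator $f_n(\theta_n,u)=f_{\theta_n,n}(\mathcal{J}_\kappa u_1,\dots,\mathcal{J}_\kappa u_N)$ is well defined on $\mathcal{V}^N$, and the target $L^q(0,T;L^{\hat q}(\Omega))$ of Assumption \ref{ass_param_app_class}, ii) embeds continuously into $\mathcal{W}=L^q(0,T;W)$ because $L^{\hat q}(\Omega)\hookrightarrow W$; hence it suffices to prove $f_{\theta_n^j,n}(\mathcal{J}_\kappa u_1^j,\dots,\mathcal{J}_\kappa u_N^j)\rightharpoonup f_{\theta_n,n}(\mathcal{J}_\kappa u_1,\dots,\mathcal{J}_\kappa u_N)$ weakly in $L^q(0,T;L^{\hat q}(\Omega))$.

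The crucial step is to upgrade $u^j\rightharpoonup u$ in $\mathcal{V}^N$ to strong convergence of the argument that is fed into Assumption \ref{ass_param_app_class}, ii). Exactly as in the proof of Lemma \ref{lem:phys_term_weak}, the Aubin–Lions lemma (\cite[Lemma 7.7]{Roubíček2013}), applied with $V\hookdoubleheadrightarrow W^{\kappa,\hat{p}}(\Omega)$ and either $W^{\kappa,\hat{p}}(\Omega)\hookrightarrow\tilde{V}$ or $\tilde{V}\hookrightarrow W^{\kappa,\hat{p}}(\Omega)$ from Assumption \ref{ass_init_set}, gives the compact embedding $\mathcal{V}\hookdoubleheadrightarrow L^p(0,T;W^{\kappa,\hat{p}}(\Omega))$, so that $u_i^j\to u_i$ strongly in $L^p(0,T;W^{\kappa,\hat{p}}(\Omega))$ for $i=1,\dots,N$ (every subsequence has a further subsequence converging strongly, necessarily to the weak limit $u_i$, hence the whole sequence converges). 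Since $\mathcal{J}_\kappa$ induces, by Lemma \ref{lem:Jbochner}, a bounded linear — therefore strongly continuous — operator $L^p(0,T;W^{\kappa,\hat{p}}(\Omega))\to\otimes_{k=0}^\kappa L^p(0,T;L^{\hat{p}}(\Omega)^{p_k})$, we conclude $(\mathcal{J}_\kappa u_1^j,\dots,\mathcal{J}_\kappa u_N^j)\to(\mathcal{J}_\kappa u_1,\dots,\mathcal{J}_\kappa u_N)$ strongly in $(\otimes_{k=0}^\kappa L^p(0,T;L^{\hat{p}}(\Omega)^{p_k}))^N$.

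With both arguments now converging strongly, the strong–weak continuity of Assumption \ref{ass_param_app_class}, ii) yields $f_{\theta_n^j,n}(\mathcal{J}_\kappa u_1^j,\dots,\mathcal{J}_\kappa u_N^j)\rightharpoonup f_{\theta_n,n}(\mathcal{J}_\kappa u_1,\dots,\mathcal{J}_\kappa u_N)$ weakly in $L^q(0,T;L^{\hat q}(\Omega))$, and composing with the continuous (hence weak–weak continuous) embedding $L^q(0,T;L^{\hat q}(\Omega))\hookrightarrow\mathcal{W}$ gives $f_n(\theta_n^j,u^j)\rightharpoonup f_n(\theta_n,u)$ in $\mathcal{W}$; carrying this out for each $n=1,\dots,N$ finishes the proof. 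The only non-routine ingredient is the Aubin–Lions compactness step — which is precisely why Assumption \ref{ass_init_set} is formulated with the two alternative embeddings relating $\tilde{V}$ and $W^{\kappa,\hat{p}}(\Omega)$ — while everything else reduces to continuity of linear maps and the already-established well-definedness of the relevant Nemytskii operators.
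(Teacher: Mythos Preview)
Your proof is correct and follows essentially the same approach as the paper's: both reduce weak convergence in $\Theta_n^m\times\mathcal{V}^N$ to strong convergence of $(\theta_n^j,\mathcal{J}_\kappa u^j)$ via finite-dimensionality of $\Theta_n^m$ and the Aubin--Lions compactness $\mathcal{V}\hookdoubleheadrightarrow L^p(0,T;W^{\kappa,\hat p}(\Omega))$ (argued as in Lemma~\ref{lem:phys_term_weak}) combined with the bounded linearity of $\mathcal{J}_\kappa$ from Lemma~\ref{lem:Jbochner}, then invoke Assumption~\ref{ass_param_app_class},~ii) and the embedding $L^q(0,T;L^{\hat q}(\Omega))\hookrightarrow\mathcal{W}$. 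Your write-up is in fact slightly more explicit about the subsequence argument and about why the final embedding preserves weak convergence.
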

\begin{proof}
	Let $(\theta^j_n, u^j)\rightharpoonup (\theta_n, u)\in \Theta_n^m\times \mathcal{V}^N$ weakly as $j\to\infty$. We aim to show that $f_n(\theta^j_n,u^j)\rightharpoonup f_n(\theta_n, u)$ weakly in $\mathcal{W}$ as $j\to \infty$. First, as $\Theta_n^m$ is a subset of a finite-dimensional space, the convergence $\theta^j_n\to \theta_n$ holds in the strong sense. Regarding $(u^j)_j\subseteq \mathcal{V}^N$ we have that $u^j\to u$ strongly in $L^p(0,T;W^{\kappa,\hat{p}}(\Omega))^N$ as $j\to\infty$ by the compact embedding $\mathcal{V}\hookdoubleheadrightarrow L^p(0,T;W^{\kappa,\hat{p}}(\Omega))$, discussed at the end of the proof of Theorem \ref{thm:uniqueness}. Now as $u^j\to u$ strongly in $L^p(0,T;W^{\kappa,\hat{p}}(\Omega))^N$ as $j\to \infty$ it follows that $\mathcal{J}_\kappa u^j\to \mathcal{J}_\kappa u$ strongly in $(\otimes_{k=0}^\kappa L^p(0,T;L^{\hat{p}}(\Omega)^{p_k}))^N$ as $j\to \infty$ due to the definition of the operator $\mathcal{J}_\kappa$ and Lemma \ref{lem:Jbochner}. Together with Assumption \ref{ass_param_app_class}, ii), we derive that $f_n(\theta_n^j, u^j)\rightharpoonup f_n(\theta_n, u)$ weakly in $L^q(0,T;L^{\hat{q}}(\Omega))$ as $j\to \infty$. Finally, we conclude that indeed $f_n(\theta_n^j, u^j)\rightharpoonup f_n(\theta_n, u)$ weakly in $\mathcal{W}$ as $j\to \infty$ due to the embedding $L^q(0,T;L^{\hat{q}}(\Omega))\hookrightarrow \mathcal{W}$.
\end{proof}
Lastly, it remains to show that the trace map $\gamma$ induces a well-defined Nemytskii operator on the extended space.
\begin{lemma}
	\label{lem:gamma}
	Let Assumption \ref{ass_init_set} hold true. Then the trace map $\gamma: V\to B$ induces a well-defined Nemytskii operator $\gamma: \mathcal{V}\to \mathcal{B}$ with $[\gamma(v)](t) = \gamma(v(t))$ for $v\in \mathcal{V}$. Furthermore, it is weak-weak continuous.
\end{lemma}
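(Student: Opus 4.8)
The plan is to mirror the argument of Lemma \ref{lem:Jbochner}, now with the (static) boundary trace operator $\gamma:V\to B$ playing the role of the differential operators $D^\beta$. Recall that $\gamma:V\to B$ is linear and bounded and that, by Assumption \ref{ass_init_set}, we have $\mathcal{V}\hookrightarrow L^p(0,T;V)$ with $p\geq s$. First I would check well-definedness and Bochner measurability of $t\mapsto \gamma(v(t))$ for fixed $v\in\mathcal{V}$. Since $v\in L^p(0,T;V)$, we have $v(t)\in V$ for a.e.\ $t\in(0,T)$, so $\gamma(v(t))\in B$ is defined pointwise. Because $v\in L^1(0,T;V)$ is Bochner measurable, there are temporal simple functions $v_k$ converging to $v$ pointwise a.e.\ in the strong topology of $V$; continuity of $\gamma:V\to B$ then gives that the simple functions $\gamma(v_k(\cdot))$ converge to $\gamma(v(\cdot))$ pointwise a.e.\ in the strong topology of $B$, which yields Bochner measurability of $t\mapsto\gamma(v(t))$. (Alternatively one may invoke the Pettis theorem \cite[Theorem 1.34]{Roubíček2013}, as in Lemma \ref{lem:nemytski_u_a}, since $B$ is separable and weak measurability of $t\mapsto\gamma(v(t))$ is immediate from continuity of $\gamma$ and weak measurability of $v$.)

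Next I would establish integrability. From $\Vert\gamma(v(t))\Vert_B\leq\Vert\gamma\Vert\,\Vert v(t)\Vert_V$ it follows that $t\mapsto\Vert\gamma(v(t))\Vert_B$ lies in $L^p(0,T)$; since $p\geq s$ and $T<\infty$, Hölder's inequality gives $L^p(0,T)\hookrightarrow L^s(0,T)$, so $\gamma(v)\in L^s(0,T;B)=\mathcal{B}$ with $\Vert\gamma(v)\Vert_{\mathcal{B}}\leq c\,\Vert\gamma\Vert\,\Vert v\Vert_{\mathcal{V}}$. This shows that $\gamma:\mathcal{V}\to\mathcal{B}$, $[\gamma(v)](t)=\gamma(v(t))$, is a well-defined, linear and bounded Nemytskii operator, which in particular is consistent with the continuity postulated in Assumption \ref{ass_init_set}, iv). Weak-weak continuity is then immediate: a bounded linear operator between normed spaces is weak-weak continuous, since for any $w^\ast\in\mathcal{B}^\ast$ one has $w^\ast\circ\gamma\in\mathcal{V}^\ast$, so $v^j\rightharpoonup v$ in $\mathcal{V}$ implies $\langle w^\ast,\gamma(v^j)\rangle\to\langle w^\ast,\gamma(v)\rangle$.

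I do not expect a genuine obstacle here; the lemma is of the same routine flavour as Lemmas \ref{lem:nemytski_u_a} and \ref{lem:Jbochner}. The only point requiring a little care is the Bochner measurability of $t\mapsto\gamma(v(t))$, handled by the simple-function approximation (or Pettis) argument above, together with keeping track of the exponent inequality $p\geq s$ that makes the $L^p$–$L^s$ embedding on $(0,T)$ available.
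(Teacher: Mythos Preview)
Your proposal is correct and follows essentially the same route as the paper: establish well-definedness and boundedness of the induced Nemytskii operator on $L^p(0,T;V)$, then use linearity together with $\mathcal{V}\hookrightarrow L^p(0,T;V)$ to conclude weak-weak continuity. The only difference is that the paper compresses the measurability/integrability step by invoking \cite[Theorem 1.43]{Necas2011} (which packages the passage from a continuous $\gamma:V\to B$ to a continuous Nemytskii operator $L^p(0,T;V)\to L^s(0,T;B)$ under separability and $p\geq s$), whereas you unfold this step by hand via the simple-function approximation argument of Lemma~\ref{lem:Jbochner}.
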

\begin{proof}
	By Assumption \ref{ass_init_set}, iv), the map $\gamma$ is continuous. Together with separability of the spaces $V, B$ and $p\geq s$ we derive by \cite[Theorem 1.43]{Necas2011} that $\gamma$ induces a well defined Nemytskii operator $\gamma:L^p(0,T;V)\to L^s(0,T;B)=\mathcal{B}$ which is continuous. Employing $\mathcal{V}\hookrightarrow L^p(0,T;V)$ and linearity of $\gamma$ concludes the proof.
\end{proof}
As a consequence together with the considerations in Section \ref{sec:problem_setting} the terms occurring in problem \eqref{min_prob} are well-defined.
In view of wellposedness of the minimization problem \eqref{min_prob} we follow \cite{AHN23}. For that purpose define for $1\leq l\leq L$ the maps $G^l$ by 
\begin{align*}
	G^l:X_\varphi^{N\times L}\times\mathcal{V}^{N\times L}\times\otimes_n\Theta_n^m\times H^{N\times L}\times\mathcal{B}^{N\times L}\to \mathcal{W}^N\times H^N\times\mathcal{B}^N\times \mathcal{Y}
\end{align*}
where $(\varphi, u,\theta, u_0,g)$ is mapped to 
\begin{align*}
	(\frac{\partial}{\partial t}u^l-F(t, u^l,\varphi^l)-f_{\theta}(t,\mathcal{J}_\kappa u^l),u^l(0)-u_{0}^l,\gamma(u^l)-g^l, K^mu^l)
\end{align*}
with $\varphi=(\varphi_n^l)_{\substack{1\leq n\leq N\\ 1\leq l\leq L}}\subseteq X_\varphi, u=(u_n^l)_{\substack{1\leq n\leq N\\1\leq l\leq L}}\subseteq\mathcal{V}, u_0=(u_{0,n}^l)_{\substack{1\leq n\leq N\\1\leq l\leq L}}\subseteq H$ and $\theta\in\otimes_n \Theta_n^m$. Recall that, notation wise, we use direct vectorial extensions over $n=1,\dots, N$. Furthermore, define for the domain of definition given by $\mathbf{D}(G):=X_\varphi^{N\times L}\times \mathcal{V}^{N\times L}\times\otimes_n\Theta_n\times H^{N\times L}\times\mathcal{B}^{N\times L}$ the operator
\begin{equation}
	\label{G_functional}
	\begin{aligned}
		G: \hspace{0.75cm}\mathbf{D}(G)\hspace{0.75cm}&\to \mathcal{W}^{N\times L}\times H^{N\times L}\times\mathcal{B}^{N\times L}\times \mathcal{Y}^L\\
		(\varphi, u,\theta, u_0,g) &\mapsto(G^l(\varphi, u,\theta,u_0,g))_{ 1\leq l\leq L}.
	\end{aligned}
\end{equation}
For $\lambda, \mu\in \mathbb{R}_+$ we define the map $\Vert\cdot\Vert_{\lambda, \mu}$ in $\mathcal{W}^{N\times L}\times H^{N\times L}\times\mathcal{B}^{N\times L}\times \mathcal{Y}^L$ by
\[
\Vert (w, h, b,y)\Vert_{\lambda, \mu} = \sum_{l=1}^{L}[\lambda(\Vert w^{l}\Vert^q_{\mathcal{W}}+\Vert h^{l}\Vert^2_{H}+\mathcal{D}_\BC(b^l))+\mu\Vert y^l\Vert^r_\mathcal{Y}]
\]
for $(w,h,b,y)\in \mathcal{W}^{N\times L}\times H^{N\times L}\times \mathcal{B}^{N\times L} \times\mathcal{Y}^L$. Letting $\mathcal{R}$ as in Assumption \ref{ass_init_set}, vi), minimization problem \eqref{min_prob} may be equivalently rewritten by
\begin{align}
	\label{min_prob_equiv}
	\tag{$\mathcal{P}'$}
	\min_{(\varphi,u,\theta,u_0,g)\in \mathbf{D}(G)}\Vert G(\varphi,u,\theta,u_0,g)-(0,0,0,y)\Vert_{\lambda, \mu}+\mathcal{R}(\varphi,u,\theta,u_0,g).
\end{align}
Note that problem \eqref{min_prob_equiv} is in canonical form as the sum of a data-fidelity term and a regularization functional where $G$, given in \eqref{G_functional}, is the forward operator and $(0,0,0,y)\in \mathcal{W}^{N\times L}\times H^{N\times L}\times\mathcal{B}^{N\times L}\times\mathcal{Y}^L$ the measured data. We prove that problem \eqref{min_prob_equiv} admits a solution in $\mathbf{D}(G)$. If the forward operator $G$ is weakly closed then problem \eqref{min_prob_equiv} admits a minimizer due to the direct method (see e.g. \cite[Chapter 3]{Scherzer2008}) and Assumption \ref{ass_init_set}, vi). The idea is to choose a minimizing sequence, which certainly, for indices large enough is bounded by coercivity of the regularizer, the norm in $H$ and the discrepancy term (together with boundedness of the trace map), thus, attaining a weakly convergent subsequence. Employing weak closedness of $G$, weak lower semicontinuity of the norms, the regularizing term and the discrepancy term (due to Assumption \ref{ass_init_set}, i) and Lemma \ref{lem:gamma}) we derive that the limit of this subsequence is a solution of the minimization problem \eqref{min_prob_equiv}.\\
Thus, it remains to verify weak closedness of the operator $G$. This is obviously equivalent and reduces to showing weak closedness of the operators $G^l$ for $1\leq l\leq L$. For weak closedness of $G^l$ it suffices to verify that

\begin{enumerate}[label=\Roman*.]
	\item \((\varphi_n^l, (u_k^l)_{1\leq k\leq N}, \theta_n)\mapsto \frac{\partial}{\partial t}u_n^l-F_n(t,(u^l_k)_{1\leq k\leq N},\varphi^l_n)-f_{\theta_n, n}(t,(\mathcal{J}_\kappa u^l_k)_{1\leq k\leq N})\)
	\item \((u_n^l,u_{0,n}^l)\mapsto u_n^l(0)-u_{0,n}^l\)
	\item \(u^l=(u_n^l)_{1\leq n\leq N}\mapsto K^mu^l\)
	\item \((u^l,g^l)\mapsto \gamma(u^l)-g^l\)
\end{enumerate}
are weakly closed in $\mathbf{D}(G)$. The weak closedness in III. and IV. follows immediately by weak-weak continuity of $K^m$ and continuity of $\gamma$ assumed in Assumption \ref{ass_init_set}. In view of I. it suffices to verify weak closedness of the differential operator $\frac{\partial}{\partial t}: \mathcal{V}\to \mathcal{W}$ as the map $(\theta_n, v, \varphi)\mapsto  F_n(v_1,\dots, v_N,\varphi)+f_{\theta_n, n}(\mathcal{J}_\kappa v_1,\dots, \mathcal{J}_\kappa v_N)\in\mathcal{W}$
for $(\theta_n,v,\varphi)\in \Theta_n^m\times\mathcal{V}^N\times X_\varphi$ is weakly closed by Lemma \ref{lem:strong_weak_cont} and Assumption \ref{ass_phys_term}, ii). 
For weak closedness of $\frac{\partial}{\partial t}: \mathcal{V}\to \mathcal{W}$ recall Assumption \ref{ass_init_set}, ii) that $\tilde V\hookrightarrow W$, and iii) that $\mathcal{V}=L^p(0,T;V)\cap W^{1,p,p}(0,T;\tilde{V}), \mathcal{W} = L^q(0,T; W)$ with some $p\geq q$. Let $(u_m)_m\subseteq \mathcal{V}$ such that $u_m\rightharpoonup u \in \mathcal{V}$ and $\frac{\partial}{\partial t} u_m\rightharpoonup v\in \mathcal{W}$. As $\frac{\partial}{\partial t}u_m\rightharpoonup \frac{\partial}{\partial t}u\in L^p(0,T;\tilde{V})\hookrightarrow \mathcal{W}$ it follows immediately that $\frac{\partial}{\partial t}u=v$, concluding weak closedness of the temporal derivative.
For II., employing the embedding $\mathcal{V}\hookrightarrow\mathcal{C}(0,T;H)$ we have that the map $(\cdot)_{t=0}:\mathcal{V}\to H$ with $u\mapsto u(0)$ is weakly closed due to \[\Vert u(0)\Vert_H\leq \sup_{0\leq t\leq T}\Vert u(t)\Vert_H\leq c\Vert u\Vert_\mathcal{V}.\]
Thus, problem \eqref{min_prob_equiv} admits a solution in $\mathbf{D}(G)$ and we conclude wellposedness of problem \eqref{min_prob} under the Assumptions \ref{ass_init_set} to \ref{ass_phys_term}.
\section{Proof of Proposition \ref{prop:demo}}
\label{app:demo}
In this section we sketch the proof for the linear example in Proposition \ref{prop:demo} showcasing our main results for the sake of completeness. For that, once Assumption \ref{ass_init_set} - \ref{ass_uniqueness} are verified to hold true, it follows by application of Proposition \ref{prop:p_dagger} and Theorem \ref{thm:uniqueness} under suitable choice of regularization parameters depending on the noise of the measurement data and Proposition \ref{prop:belo}.\\

\noindent\textbf{Ad Assumption \ref{ass_init_set}:} The spaces $V=\tilde{V}=X_\varphi=H^{1}(\Omega)$, $W = Y = L^2(\Omega)$ are separable and reflexive Banach spaces. Note that neither initial nor boundary conditions are considered in the setup of Proposition \ref{prop:demo} such that no choice of $H, B, \gamma$ and $\mathcal{D}_{BC}$ is necessary. We have $\kappa=0$ (only eventually unknown reaction terms are learned) and choose $V_1=L^2(\Omega)$. The parameter sets $\Theta^m_n$ in \cite[Theorem 1]{belomestny23} are closed and contained in finite dimensional spaces (note that the components of each parameter are contained in the interval $[-1,1]$). The embeddings in Assumption \ref{ass_init_set}, ii) are either trivial or follow by the compact embedding $H^{1}(\Omega)\hookdoubleheadrightarrow L^{2}(\Omega)$ where we choose $\hat{p}=\hat{q}=2$. The conditions on the extended spaces in Assumption \ref{ass_init_set}, iii) follow for $p=q=r=2$. Boundedness of the linear operators $(K^m)_m$ implies weak-weak continuity as demanded in Assumption \ref{ass_init_set}, v). Finally the regularization functional $\mathcal{R}: X_\varphi^L\times \mathcal{V}^L\times \Theta^m\to [0,\infty]$ with 
\[
	X_\varphi^L\times \mathcal{V}^L\times \Theta^m\ni (\varphi, u, \theta)\mapsto \sum_{l=1}^L(\Vert \varphi^l\Vert_{L^2(\Omega)}^2+\Vert u^l\Vert_{\mathcal{V}}^2)+\Vert f_\theta\Vert_{L^2(U)}^2+\Vert\nabla f_\theta\Vert_{L^\infty(U)}+\nu^m\Vert\theta\Vert,
\]
for a sufficiently large interval $U$ and regularization parameters $\nu^m>0$, is coercive and weakly lower semicontinuous (which follows by Proposition \ref{prop:ass5_nn}). Note that $N=1$. Furthermore, the choice of the image space as $W=L^2(\Omega)$ is justified since $\partial_t u-\varphi\cdot\nabla u\in L^2(\Omega)$ for $\varphi\in H^1(\Omega)\hookrightarrow L^\infty(\Omega)$ and $u\in \mathcal{V} = W^{1,2,2}(0,T;V)$ as $\partial_t u\in L^2(\Omega)$ and $\nabla u\in L^2(\Omega)$ by  $u\in \mathcal{V} = W^{1,2,2}(0,T;V)$. The inclusion for parameterized nonlinearities $f_\theta(u)\in L^2(\Omega)$ is fulfilled by the following considerations.\\

\noindent\textbf{Ad Assumption \ref{ass_param_app_class}:} The extendability to a well-defined Nemytskii operator and continuity property of the parameterized nonlinearities follow as mentioned in the paragraph right before Proposition \ref{prop:ass3_nn} by \cite[Lemma 4, Lemma 5]{AHN23} under the regularity condition in Assumption \ref{ass_uniqueness}, i) that is addressed below.\\

\noindent\textbf{Ad Assumption \ref{ass_phys_term}:} The extendability to a well-defined Nemytskii operator and continuity property of the physical term follow by Proposition \ref{prop:ass4_linear}. The physical term is of the form in \eqref{phys_term_linear} with $\omega=1$, $\Psi=0$, $\Phi_\beta=0$ for $\beta=0$ and $\Phi_\beta(t,\varphi)=\varphi$ for $\beta=1$, $t\in(0,T)$ and $\varphi\in X_\varphi$. The embedding $\tilde{V}\hookrightarrow W^{1,2}(\Omega)$ follows by the choices made above. Furthermore, since $H^1(\Omega)\hookdoubleheadrightarrow \mathcal{C}(\overline{\Omega})$ we can set $s_1=\infty$ and recover also the additional condition in Proposition \ref{prop:ass4_linear}.\\

\noindent\textbf{Ad Assumption \ref{ass_uniqueness}:} Condition i) follows by Remark \ref{rem:regul_cond} with $\tilde{\kappa}=d=1$ and $\eta=2$. The result in Proposition \ref{prop:ass5_nn} implies condition ii). Condition iv) is a consequence of Proposition \ref{prop:belo}. Due to the assumptions in Proposition \ref{prop:demo}, Remark \ref{rem:regul_cond} and the compact embedding $\mathcal{V}\hookdoubleheadrightarrow \mathcal{Y}$ which follows by the Aubin-Lions Lemma as applied in the proof of Lemma \ref{lem:phys_term_weak}, we derive condition v). Condition vii) follows essentially for sufficiently large $U$. Affine linearity in condition viii) is trivial whereas the stated continuity property follows by the considerations on Assumption \ref{ass_phys_term} above.
\section{Example on choice of regularization}
\label{app:analytic_example}
In this section, based on a simple example, we show that choosing a $W^{1,\infty}$-type norm in accordance with \eqref{intro_regularizations}, as opposed to an $L^p$-type norm, can indeed be necessary in general for the identification of a hidden physics component.
 For that, we consider a one dimensional time-independent equation on the unit interval with known physical term $F$ that depends on the first spatial derivative of the state. Furthermore, we suppose that the state $u$ is given via the full measurement operator (e.g. equal to the identity with noiseless measurement data) and is approximated by a known sequence of states $(u_m)_m$. Note that, although this setup is simpler than the general one considered in this work, the example shows that already in this simplified situation appropriate regularization, as discussed, matters. 
Considering classes of continuous functions parameterized by sets $\Theta^m$ that can approximate more and more complicated functions better for increasing $m\in\mathbb{N}$ (according to Assumption \ref{ass_uniqueness}, ii) and iv)), we provide an example where solutions $(f_{\theta^m})_m$ to
\begin{align}
	\label{pml2_ex}
	\tag{$\mathcal{P}^m_{L^2}$}
	\min_{\theta\in \Theta^m}\Vert f_\theta\Vert_{L^2(0,1)}^2 +\lambda^m\Vert f_\theta(u_m)+F(u_m')\Vert^2_{L^2(0,1)}+\mu^m\Vert \theta\Vert
\end{align}
with $\theta^m\in \Theta^m$, do not converge to the unique solution $f^\dagger\in L^2(0,1)$ of 
\begin{align}
	\label{pdl2_ex}
	\tag{$\mathcal{P}^\dagger_{L^2}$}
	\min_{f\in L^2(0,1)}\Vert f\Vert_{L^2(0,1)}^2 \quad \text{s.t.} ~ f(u)=g.
\end{align}
It is important to note that the constraint in \eqref{pdl2_ex} is not well-defined for general $f,u,g\in L^2(0,1)$ but for the setup discussed below it is. In view of well-definedness of the minimization problem \eqref{pml2_ex} we refer to the considerations below. Here $(\lambda^m)_m,(\mu^m)_m\subset \mathbb{R}_+$ are suitable regularization parameters, the former monotonically increasing and divergent, and the latter a zero sequence. Finally, in contrast to above situation we show that penalizing the gradient term similarly as in \eqref{intro_regularizations} allows to identify the unique solution $f^\dagger$ of
\begin{align}
	\label{pdw1_ex}
	\tag{$\mathcal{P}^\dagger_{W^{1,\infty}}$}
	\min_{f\in W^{1,\infty}(0,1)}\Vert f\Vert_{L^2(0,1)}^2 +\Vert \nabla f\Vert_{L^\infty(0,1)}\quad \text{s.t.} ~ f(u)=g
\end{align}
as limit of solutions $(f_{\theta^m})_m$ for $\theta^m\in \Theta^m$ to
\begin{align}
	\label{pmw1_ex}
	\tag{$\mathcal{P}^m_{W^{1,\infty}}$}
	\min_{\theta\in \Theta^m}\Vert f_\theta\Vert_{L^2(0,1)}^2 +\Vert \nabla f_\theta\Vert_{L^\infty(0,1)}+\lambda^m\Vert f_\theta(u_m)+F(u_m')\Vert^2_{L^2(0,1)}+\mu^m\Vert\theta\Vert.
\end{align}
Concretely, we choose the known physical term $F$ by
\[
\mathbb{R}\ni v\mapsto F(v)=\begin{cases}
	\left(1-4(v-1)^2\right)^{1/2}, &\text{if}~1/2\leq v\leq 3/2,\\
	0, &\text{otherwise.}
\end{cases}
\]
Furthermore, we suppose that the state is given by the identity map $u(x)=x$ for $x\in[0,1]$ and is approximated by the sequence of states $(u_m)_m$ given for $m\in \mathbb{N}$ by
\[
	u_m(x) = x +\frac{1}{4\pi m}\sin(2\pi m x)\quad \text{for} ~ x\in [0,1].
\]
The sequence $(u_m)_m$ converges to $u$ in $L^\infty(0,1)$ since $\Vert u_m-u\Vert_{L^\infty(0,1)} =(4\pi m)^{-1}$ for $m\in\mathbb{N}$ (in fact, even weakly in $H^1(0,1)$). Note that $u_m:[0,1]\to [0,1]$ is bijective and even diffeomorphic since $u_m'(x)=1+\cos(2\pi m x)/2\in [1/2, 3/2]$ for all $x\in [0,1]$ and $m\in \mathbb{N}$. The sequence $(g_m)_m$ with $g_m:=-F(u_m')$, i.e., for $m\in \mathbb{N}$
\[
	g_m(x) = \sin(2\pi m x)\quad \text{for} ~  x\in [0,1],
\]
converges to $g=0$ weakly in $L^2(0,1)$. The regularization parameters are chosen by $\lambda^m =\lambda_0 m^{1/2}$ for $m\in\mathbb{N}$ and fixed $\lambda_0>0$ whereas $(\mu^m)_m$ is a zero sequence satisfying the following conditions. We assume that the zero function can be parameterized for any $m\in \mathbb{N}$ with suitable parameters $\hat{\theta}^m\in \Theta^m$. Furthermore, suppose that the periodic functions $h_m:=g_m\circ u_m^{-1}$ (which have period $1/m$) can be parameterized with parameters $\tilde{\theta}^m\in \Theta^m$ for $m\in\mathbb{N}$. Then we choose $(\mu^m)_m$ such that both $(\mu^m\Vert\hat{\theta}^m\Vert)_m$ and $(\mu^m\Vert\tilde{\theta}^m\Vert)_m$ are zero sequences. Note that the PDE data term $\Vert f(u_m)-g_m\Vert_{L^2(0,1)}^2$ is well-defined for any $f\in L^2(0,1)$. Indeed, since $u_m$ is a diffeomorphism of the interval $[0,1]$ onto itself, a change of variables yields
\begin{align}
	\label{integral_subs}
	\Vert f(u_m)\Vert_{L^2(0,1)}^2 = \int_0^1\vert f(x)\vert^2(u_m'(u_m^{-1}(x)))^{-1}\dx x.
\end{align}
Using that $1/2\leq u_m'(x)\leq 3/2$ for $x\in [0,1]$ and $m\in\mathbb{N}$ implies
\begin{align}
	\label{two_side}
	\frac{2}{3}\Vert f\Vert_{L^2(0,1)}^2\leq\Vert f(u_m)\Vert_{L^2(0,1)}^2 \leq 2\Vert f\Vert_{L^2(0,1)}^2.
\end{align}
Note that by similar arguments we derive for $h_m = g_m\circ u_m^{-1}$ that
\begin{align}
	\label{two_side_composition}
	\Vert h_m\Vert_{L^2(0,1)}^2\leq\frac{3}{2}\Vert g_m\Vert_{L^2(0,1)}^2 =3/4
\end{align}
using that $\Vert g_m\Vert_{L^2(0,1)}^2=1/2$ for $m\in \mathbb{N}$.
For $f\in W^{1,\infty}(0,1)$ well-definedness follows from the embedding $W^{1,\infty}(0,1)\hookrightarrow \mathcal{C}(0,1)$. Furthermore, note that well-definedness of \eqref{pmw1_ex} follows by the considerations in this work, whereas well-definedness of \eqref{pml2_ex} is a consequence of the direct method together with \eqref{integral_subs} applied to parameterizations instead of $f$ and continuity with respect to the parameterization similar as in Proposition \ref{prop:ass3_nn}.

 We now start by considering the $L^2$-regularized problem. For $m\in \mathbb{N}$ the function $h_m= g_m\circ u_m^{-1}$ is representable by $f_{\tilde{\theta}^m}$ with $\tilde{\theta}^m\in \Theta^m$ such that the objective functional of \eqref{pml2_ex} in an optimum can be estimated by
\begin{multline*}
	\Vert f_{\theta^m}\Vert_{L^2(0,1)}^2+\lambda_0m^{1/2}\Vert f_{\theta^m}(u_m)-g_m\Vert_{L^2(0,1)}^2+\mu^m\Vert\theta^m\Vert\\
	 \leq \Vert f_{\tilde{\theta}^m}\Vert_{L^2(0,1)}^2+\lambda_0m^{1/2}\Vert \underbrace{f_{\tilde{\theta}^m}(u_m)-g_m}_{=0}\Vert_{L^2(0,1)}^2+\mu^m\Vert\tilde{\theta}^m\Vert.
\end{multline*}
Due to the choice of $(\mu^m)_m$ and \eqref{two_side_composition} the right hand side is uniformly bounded for sufficiently large $m\in\mathbb{N}$. Thus, it follows that $\Vert f_{\theta^m}(u_m)-g_m\Vert_{L^2(0,1)}^2$ converges to zero as $m\to \infty$. Hence, there exists a constant $c>0$ such that by the reverse triangle inequality for sufficiently large $m\in\mathbb{N}$
\begin{align}
	\label{lower_bound_composition}
	\Vert f_{\theta^m}(u_m)\Vert_{L^2(0,1)}\geq \underbrace{\Vert g_m\Vert_{L^2(0,1)}}_{=2^{-1/2}}-\Vert f_{\theta^m}(u_m)-g_m\Vert_{L^2(0,1)}\geq c >0.
\end{align}
Thus, the sequence $(f_{\theta^m})_m$ cannot converge to zero in $L^2(0,1)$ as \eqref{two_side} would immediately lead to a contradiction to \eqref{lower_bound_composition}. However, the constant zero function is the unique solution to \eqref{pdl2_ex} (of course up to representatives in the Lebesgue sense). As a consequence, the reconstruction of the hidden physical term fails.
In fact, one can show for the minimization problem similar to \eqref{pml2_ex} considered over general $L^2(0,1)$-functions (and without parameter regularization) that by analyzing the first variation of the resulting strictly convex  objective functional under \eqref{integral_subs}, the corresponding minimizer $f_m$ is given by
\[
f_m(x) =\lambda_0 \left(\lambda_0+m^{-1/2} u_m'(u_m^{-1}(x))\right)^{-1}g_m(u_m^{-1}(x)) \quad \text{for} ~ x\in [0,1],
\]
whose $L^2(0,1)$-norm can be shown to be uniformly bounded from below. Another interesting point is that the $(f_m)_m$ are continuously differentiable in the open unit interval and $\lim_{m\to \infty}\Vert \nabla f_m\Vert_{L^\infty(0,1)}=\infty$. Thus, also in a purely analytic setup, the reconstruction of the hidden physics fails.

Let us consider next the $W^{1,\infty}$-type regularized problems. Problem \eqref{pdw1_ex} attains the unique solution $f^\dagger=0$. We argue that $f^\dagger$ is recovered by $(f_{\theta^m})_m$ solving \eqref{pmw1_ex}. For that, we first estimate the objective function of \eqref{pmw1_ex} in the optimum by its value at $f_{\hat{\theta}^m}$ representing the constant zero function, yielding
\begin{multline}
	\label{objective_estimation}
	\Vert f_{\theta^m}\Vert_{L^2(0,1)}^2+\Vert\nabla f_{\theta^m}\Vert_{L^\infty(0,1)}+\lambda_0m^{1/2}\Vert f_{\theta^m}(u_m)-g_m\Vert_{L^2(0,1)}^2+\mu^m\Vert \theta^m\Vert\\
	\leq  \lambda_0m^{1/2}\Vert g_m\Vert_{L^2(0,1)}^2+\mu^m\Vert\hat{\theta}^m\Vert=\lambda_0m^{1/2}/2+\mu^m\Vert\hat{\theta}^m\Vert.
\end{multline}
We derive by \eqref{objective_estimation} that
\[
	\Vert\nabla f_{\theta^m}\Vert_{L^\infty(0,1)}\leq \lambda_0m^{1/2}/2+\mu^m\Vert\hat{\theta}^m\Vert,
\]
which implies the existence of a constant $c>0$ such that $\Vert \nabla f_{\theta^m}\Vert_{L^\infty(0,1)} \leq c m^{1/2}$ for $m\in\mathbb{N}$ since $(\mu^m\Vert \hat{\theta}^m\Vert)_m$ is a zero sequence. Under this constraint on the gradient, the best approximation of the data term is bounded from below by
\begin{align}
	\label{inf_estim_data}
	\inf_{\substack{\theta\in \Theta^m,\\ \Vert \nabla f_\theta\Vert_{L^\infty(0,1)}\leq c\sqrt{m}}}\Vert f_{\theta}(u_m)-g_m\Vert_{L^2(0,1)}\geq \inf_{\substack{f\in W^{1,\infty}(0,1),\\ \Vert \nabla f\Vert_{L^\infty(0,1)}\leq c\sqrt{m}}}\Vert f(u_m)-g_m\Vert_{L^2(0,1)}.
\end{align}
We analyze the right hand side in more detail. By the mean value theorem it holds true for $f\in W^{1,\infty}(0,1)$ that
\[
	\Vert f(u)-f(u_m)\Vert_{L^2(0,1)}\leq \Vert \nabla f\Vert_{L^\infty(0,1)}\Vert u-u_m\Vert_{L^2(0,1)}=\tilde c\Vert \nabla f\Vert_{L^\infty(0,1)}m^{-1}
\]
with $\tilde c=32^{-1/2}\pi^{-1}$, such that \eqref{inf_estim_data} can be estimated from below by
\begin{align}
	\label{reduced_inf_w1oo}
	\inf_{\substack{f\in W^{1,\infty}(0,1),\\ \Vert \nabla f\Vert_{L^\infty(0,1)}\leq cm^{1/2}}}\Vert f-g_m\Vert_{L^2(0,1)}-\tilde c c m^{-1/2}.
\end{align}
Using a scaling argument, the value of the approximation problem in \eqref{reduced_inf_w1oo} equals
\begin{align}
	\label{inf_reduced_constraint}
	cm^{1/2}\inf_{\substack{f\in W^{1,\infty}(0,1),\\ \Vert \nabla f\Vert_{L^\infty(0,1)}\leq 1}}\Vert f-c^{-1}m^{-1/2}g_m\Vert_{L^2(0,1)}.
\end{align}

Due to \cite[Theorem 1.1]{Buccheri2024} this problem attains a unique solution $\hat{f}$ which satisfies
\begin{align*}
	\hat{f}(x)&=\max_{y\in \partial A^+} \hat{f}(y)-\vert x-y\vert\qquad \text{for all} ~ x\in A^+,\\
	\hat{f}(x)&=\min_{y\in \partial A^-} \hat{f}(y)+\vert x-y\vert\qquad \text{for all} ~ x\in A^-,
\end{align*}
where $A^+ = \supp(\max(\hat{f}-c^{-1}m^{-1/2}g_m,0))$ and $A^- = \supp(\min(\hat{f}-c^{-1}m^{-1/2}g_m,0))$ denote the respective supports, and $\partial A^+, \partial A^-$ the corresponding boundaries. With this, and the fact that $c^{-1}m^{-1/2}g_m$ attains $m$ full periods in $[0,1]$, it follows by a symmetry argument that $\hat{f}$ is given by the saw-tooth function with unit slope and same period and sign as $g_m$. As a consequence, the term in \eqref{inf_reduced_constraint} equals
\[
	cm^{1/2}\left(4m\int_0^{1/4m}(c^{-1}m^{-1/2}g_m(x)-x)^2\dx x\right)^{1/2}=\left(\frac{1}{2}-\frac{2c}{\pi^2 m^{1/2}}+\frac{c^2}{48m}\right)^{1/2}.
\]
Combining these arguments, finally, yields by \eqref{objective_estimation} that
\begin{multline*}
	\Vert f_{\theta^m}\Vert_{L^2(0,1)}^2+\Vert\nabla f_{\theta^m}\Vert_{L^\infty(0,1)}\\
	\leq  \lambda_0m^{1/2}\left[\frac{1}{2}-\left(\left(\frac{1}{2}-\frac{2c}{\pi^2m^{1/2}}+\frac{c^2}{48m}\right)^{1/2}-\frac{c}{\sqrt{32}\pi m^{1/2}}\right)^2\right]+\mu^m\Vert\hat{\theta}^m\Vert.
\end{multline*}
The right hand side converges to the constant $\frac{c\lambda_0}{4\pi}\left(\frac{8}{\pi}+1\right)$ as $m\to\infty$. Thus, there exist $M\in\mathbb{N}$ and a constant $\alpha>0$ such that for all $m\geq M$ it holds true that
\[
	\Vert \nabla f_{\theta^m}\Vert_{L^\infty(0,1)}\leq \alpha.
\]
Repeating the arguments starting from \eqref{inf_estim_data}, but now with the refined bound $\Vert \nabla f_{\theta}\Vert_{L^\infty(0,1)}\leq \alpha$ on the gradients, yields for $m\geq M$ that
\begin{multline*}
	\Vert f_{\theta^m}\Vert_{L^2(0,1)}^2+\Vert\nabla f_{\theta^m}\Vert_{L^\infty(0,1)}\\
	\leq  \lambda_0m^{1/2}\left[\frac{1}{2}-\left(\left(\frac{1}{2}-\frac{2\alpha}{\pi^2m}+\frac{\alpha^2}{48m^2}\right)^{1/2}-\frac{\alpha}{\sqrt{32}\pi m}\right)^2\right]+\mu^m\Vert\hat{\theta}^m\Vert,
\end{multline*}
which converges to zero as $m\to \infty$. As a consequence, it holds true that 
\[
	\lim_{m\to \infty}\Vert \nabla f_{\theta^m}\Vert_{L^\infty(0,1)}=0\quad \text{and} \quad \lim_{m\to \infty}\Vert  f_{\theta^m}\Vert_{L^2(0,1)}=0,
\]
implying that $(f_{\theta^m})_m$ converges uniformly to zero, the unique solution to \eqref{pdw1_ex}.
\paragraph*{Numerical experiments.}
To show that the above counterexample is also observable in practice, we implemented it numerically using two-hidden layers neural networks with input- and output dimension one. The first layer consists of $10$ nodes and is sine-activated with frequency $2$ (to enable the representation of high frequency oscillations). The second layer consists of $m\in\mathbb{N}$ nodes and is ReLU-activated. The network training for  \eqref{pml2_ex} and \eqref{pmw1_ex} was performed over $1500$ epochs for $m\in\{10,100,1000\}$ using the Adam optimizer with learning rate $3\cdot10^{-3}$ and weight decay $\mu^m=0.1\cdot m^{-1}$ for $200$ uniformly sampled training points in the unit interval. Furthermore, we chose $\lambda_0=1/2$. The gradient $\nabla f_\theta$ in view of the loss calculation of \eqref{pmw1_ex} is approximated via $30$ uniformly sampled approximations of the gradient using finite differences. An illustration of the results is given in Figure \ref{fig:example}.
\begin{figure}[h]
	\begin{center}
		\includegraphics[scale=0.4]{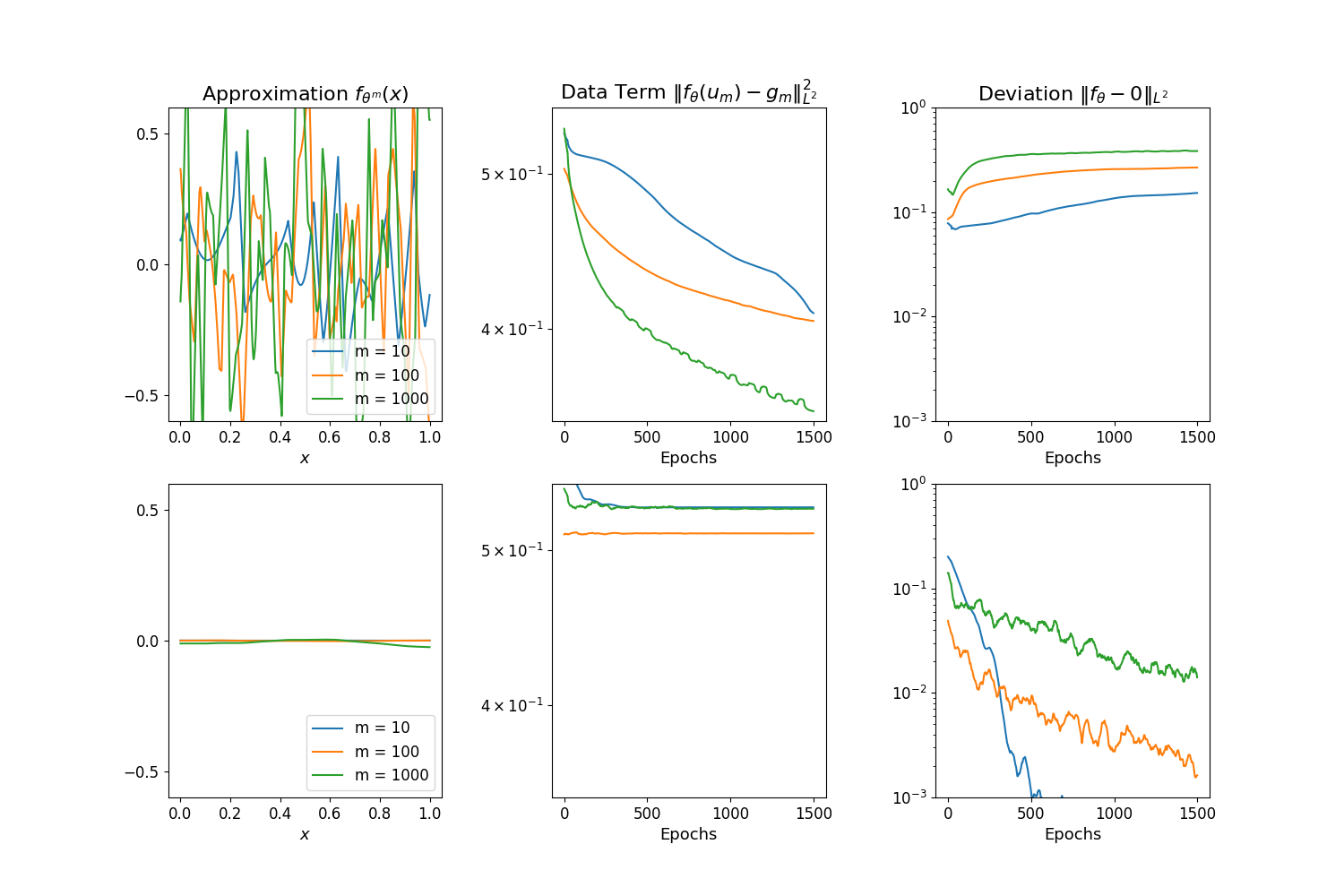}
		\caption{Numerical example with different regularizations. Top: $L^2$ regularization, bottom: $W^{1,\infty}$-type regularization.}
		\label{fig:example}
	\end{center}
\end{figure}
In Figure \ref{fig:example} the first and second line of subplots correspond to the considerations on \eqref{pml2_ex} and \eqref{pmw1_ex}, respectively. The first column of subplots depicts approximations of hidden physics after $1500$ epochs for the different $m$ above. In the second column the corresponding data term losses are plotted. Finally, in the third column the $L^2$-deviation of the approximated hidden physics from the unique solutions of the limit problem \eqref{pdl2_ex} and \eqref{pdw1_ex}, the zero function, is depicted, respectively. The main conclusion is that under $L^2$-regularization the physical term reconstruction fails, whereas with the $W^{1,\infty}$-type penalty the hidden physics is identified correctly. For the latter it is further clear that the unknown term is approximated better for increasing $m$ and the parameterized approximations are relatively flat due to the gradient penalty. Note that the data term loss of \eqref{pmw1_ex} stagnates as predicted by our analysis above.
\section*{Notation}
We briefly summarize the function spaces and embeddings, which form the basis of this work. A list of the symbols and abbreviations used is also provided.\\

\noindent\textbf{General spaces}\vspace*{0.2cm}

\noindent\begin{tabular}{m{4.8cm} m{10.5cm}}
	$\Omega$\dotfill& space domain\\  
	$L^{p}(\Omega)$\dotfill & Lebesgue space\\  
	$W^{\kappa,p}(\Omega)$\dotfill & Sobolev space\\ 
	$L^p(I;X), \mathcal{C}(I;X)$\dotfill & Bochner space \cite[Section 1.5]{Roubíček2013}\\
	$W^{1,p,q}(I;X)$\dotfill& Sobolev-Bochner space \cite[Section 7.1]{Roubíček2013}\\  
	$W^{1,\infty}_{loc}(\mathbb{R}^D)$\dotfill&space of locally  $W^{1,\infty}$-regular functions on $\mathbb{R}^D$
\end{tabular}

\noindent\textbf{Function spaces} (cf. Section \ref{sec:problem_setting})\vspace*{0.2cm}

\noindent\begin{tabular}{m{4.8cm} m{10.5cm}}
	\vspace*{0.3cm}\hspace*{-0.2cm}For some $1\leq p,q,r,s<\infty$ &\vspace*{0.3cm}\hspace*{-0.4cm}with $p\geq q, p\geq s$:\\
	$V$\dotfill & state space \\ 
	$\tilde{V}$\dotfill & space of time derivative \\ 
	$\mathcal{V}$\dotfill & dynamic state space $\mathcal{V}= L^p(0,T;V)\cap W^{1,p,p}(0,T;\tilde{V})$\\
	$V_k, V_k^\times$, $\mathcal{V}_k, \mathcal{V}_k^\times$\dotfill & space of spatial derivatives and dynamic extensions\\  
	$W$, $\mathcal{W}=L^q(0,T;W)$\dotfill & image space and dynamic extension\\  
	$Y$, $\mathcal{Y}=L^r(0,T;Y)$\dotfill & observation space and dynamic extension \\ 
	$B$, $\mathcal{B}=L^s(0,T;B)$\dotfill & boundary trace space and dynamic extension\\  
	$H$\dotfill & initial trace space\\  
	$X_\varphi$\dotfill & parameter space\\  
	$\Theta_n^m$\dotfill & parameter sets \\ 
	$\mathcal{F}_n^m$\dotfill & approximation classes
\end{tabular}
\vspace*{0.1cm}\\

\noindent\textbf{Embeddings} (cf. Assumption \ref{ass_init_set}, ii))\vspace*{0.2cm}

\begin{tabular}{ m{7cm}}
	$V\hookrightarrow H\hookrightarrow\tilde{V}\hookrightarrow W$ \\ 
	$L^{\hat{p}}(\Omega)\hookrightarrow V_k\hookrightarrow L^{\hat{q}}(\Omega)$ for $1\leq k\leq \kappa$\\ 
	$W^{\kappa, \hat{p}}(\Omega)\hookrightarrow \tilde{V}$ or $\tilde{V}\hookrightarrow W^{\kappa, \hat{p}}(\Omega)$\\
	$L^{\hat{q}}(\Omega)\hookrightarrow W$ for some $1\leq \hat{q}\leq \hat{p}<\infty$\\
	$V\hookrightarrow Y$\\
	$V\hookdoubleheadrightarrow W^{\kappa,\hat{p}}(\Omega)$
\end{tabular}
\vspace*{0.7cm}\\

\noindent\textbf{Notational conventions}\vspace*{0.2cm}

\begin{tabular}{m{3cm} m{9.5cm}}
	$\mathcal{D}_{BC}$\dotfill & discrepancy term for boundary conditions\\
	$f, f_\theta$\dotfill&hidden physics component and parametrization\\
	$F$\dotfill & known physical model\\ 
	$\gamma$\dotfill & boundary trace map\\
	$\hookrightarrow, \hookdoubleheadrightarrow$\dotfill & continuous embedding, compact embedding\\
	$J^l$, $\mathcal{J}_\kappa$\dotfill & Jacobian mapping and derivative operator\\
	$K^\dagger, K^m$\dotfill & measurement operators\\
	$\mathcal{N}_\theta$\dotfill & feed forward neural network\\
	$\otimes$\dotfill& Cartesian product of spaces\\
	$\varphi$\dotfill & physical parameter\\ 
	$\mathcal{R}^\dagger, \mathcal{R}_m, \mathcal{R}_0$\dotfill & regularization functionals\\
	$u, u_0$\dotfill & state, initial condition\\ 
	$y, y^m$\dotfill & measurement data \\
\end{tabular}
\vspace*{0.7cm}\\
\newgeometry{margin=0.8in}
\setstretch{0.9}
\bibliographystyle{plainurl}
{\footnotesize\bibliography{references}}

\end{document}